\DeclareMathOperator{\Id}{Id}
\DeclareMathOperator{\diag}{diag}
\let\tilde\widetilde
\let\hat\widehat
\renewcommand{\MR}[1]{}
\numberwithin{equation}{section}
\theoremstyle{plain}
\newtheorem{theorem}[equation]{Theorem}
\newtheorem{proposition}[equation]{Proposition}
\newtheorem{lemma}[equation]{Lemma}
\newtheorem{corollary}[equation]{Corollary}
\newtheorem*{claim*}{Claim}
\newtheorem{claim}{Claim}
\theoremstyle{definition}
\newtheorem{definition}[equation]{Definition}
\theoremstyle{remark}
\newtheorem{remark}[equation]{Remark}
\title[Frobenius manifolds with a tt*-structure]{Some constraints on Frobenius manifolds with a tt*-structure}
\begin{document}
\author{Jiezhu Lin}
\address{Guangzhou, China}
\email{ljzsailing@163.com}

\begin{abstract}
The article gives a necessary and sufficient condition for a
Frobenius manifold to be a CDV-structure. We show that there
exists a positive definite CDV-structure on any semi-simple
Frobenius manifold. We also compare three natural connections on a
CDV-structure and conclude that the underlying Hermitian manifold
of a non-trivial CDV-structure is not a K\"ahler manifold.
Finally, we compute the harmonic potential of a harmonic Frobenius
manifolds.
\end{abstract}

\keywords{Frobenius manifold, Saito structure, tt*-geometry,
CDV-structure, harmonic Frobenius manifold}

\thanks{The author thanks CMLS of Ecole Polytechnique in Palaiseau for good working
condition during her visit in May 2008, and especially professor
Claude Sabbah for his invitation, hospitality and enlightening
conversations. Her work is partly supported by
NKBRPC(2006CB805905), CMLS of Ecole Polytechnique and DMA of Ecole
Normale sup\'erieure.}

\maketitle

\setcounter{section}{-1}

\section{Introduction}\label{section0}

Cecotti and Vafa \cite{CV3} \cite{CVN} considered moduli spaces of
$N=2$ super-symmetric quantum field theories and introduced a
geometry on them which is governed by the tt*-equations. By the
work of K.~Saito and M.~Saito, it was previously known that the
base space of a semi-universal unfolding of a hypersurface
singularity can be equipped with the structure of what is now
called a Frobenius manifold, after \cite{Dubro}. By the work of
Cecotti and Vafa it can be equipped with tt* geometry if the
singularity is quasi-homogeneous. tt* geometry generalizes the
notion of variation of Hodge structures. Inspired by the papers
\cite{Dubro}, \cite{CV3} and \cite{CVN}, C.~Hertling combines
these two structures together into a structure which he calls a
\emph{CDV-structure}.

The purpose of the first part of this article is to give a
necessary and sufficient condition for a Frobenius manifold to be
a CDV-structure. This condition plays an important role in
constructing examples of CDV-structures, and in the existence
CDV-structures on any semi-simple Frobenius manifold. The purpose
of the second part of this article is to compare three natural
connections on a CDV-structure and conclude that the real
structure $\kappa$ cannot be flat and the underlying real $(1,
1)$-form of a nontrivial semi-simple CDV-structure can not be a
symplectic form. In particular, the underlying Hermitian manifold
of a nontrivial semi-simple positive CDV-structure cannot be a
K\"ahler manifold.

\subsubsection*{Acknowledgement}
The author is grateful to professor Claude Sabbah for pointing me
to the work of Claus Hertling, and for his patient helps,
encouragements, valuable suggestions and fruitful discussions. She
also expresses her gratitude to professor Juanxun Hu for his
patient helps, valuable suggestions and encouragements.

\section{Frobenius manifold and tt* geometry}\label{section1}

In this section we recall the notion of a Frobenius manifold and
CDV-structure. This will mainly serve to fix notation.

\subsection{Saito structure and Frobenius manifold structure}\label{subsection1a}
Frobenius manifolds were introduced and investigated by B.
Dubrovin as the axiomatization of a part of the rich mathematical
structure of the Topological Field Theory (TFT): cf. \cite{D,
Hert, Mani}

A Frobenius manifold (also called Frobenius structure on $M$) is a
quadruple $(M, \circ, g, e, \mathcal{E})$. Here $M$ is a manifold
in one of the standard categories ($C^\infty$, analytic, ...), $g$
is a metric on $M$ (that is, a symmetric, non-degenerate bilinear
form, also denoted by $\< \,,\, \>$), $\circ$ is a commutative and
associative product on $T_M$ and depends smoothly on $M$, such
that if $\nabla$ denotes the Levi-Civita connection of $g$, all
subject to the following conditions:
\begin{enumerate}
\item[a)] $\nabla$ is flat;

\item[b)] $g( X \circ Y, Z) = g( X, Y \circ Z)$, for any $X, Y, Z
\in TM$.

\item[c)] the unit vector field e is covariant constant w.r.t.
$\nabla$
\begin{eqnarray*}
\nabla e = 0;
\end{eqnarray*}

\item[d)] Let
\begin{equation*}
c(X, Y, Z):=g(X \circ Y, Z)
\end{equation*}
(a symmetric 3-tensor). We require the 4-tensor
\begin{equation*}
(\nabla_Z c)(U, V, W)
\end{equation*}
to be symmetric in the four vector fields $U, V, W, Z$.

\item[e)] A vector field $\mathcal{E}$ must be determined on $M$
such that
\begin{align}
\nabla (\nabla \mathcal{E}) &= 0;\label{nablanablaE}\\
\mathcal{L}_{\mathcal{E}}(\circ)&=\circ;\label{LEcirc}\\
\mathcal{L}_{\mathcal{E}} ( g)&=D \cdot g.\label{LEg}
\end{align}
\end{enumerate}

\begin{remark}\label{remark SABB}
In this definition, because the metric $g$ is flat and the unit
field $e$ is covariant constant w.r.t$. \nabla$, then \eqref{LEg}
implies \eqref{nablanablaE}.

Good reference is the last chapter in \cite{Sabb}.
\end{remark}

There are several equivalent ways to describe a Frobenius
structure. One way, called Saito structure, is recalled here:

\begin{definition}\label{deifinition: Saito structure}

Let $M$ be a complex analytic manifold of dimension~$m$. A Saito
structure on $M$ (without metric) consists of the following data:

\begin{enumerate}
\item[1)] a flat torsion free connection $ \nabla$ on the tangent
bundle $\mathcal{T}_M$;

\item[2)] a symmetric Higgs field $\Phi$ on the tangent bundle
$\mathcal{T}_M$, that is, $\Phi$ is an $\mathcal{O}_M$-linear map
$\Phi$: $\Theta_M \rightarrow \Omega_M^{1} \otimes \Theta_M$ such
that
\begin{equation*}
\Phi_X \Phi_Y = \Phi_Y \Phi_X;
\end{equation*}
\item[3)] two global sections (vector fields) $e$ and
$\mathcal{E}$ of $\Theta_M$, respectively called unit field and
Euler field of the structure.
\end{enumerate}

These data are subject to the following conditions:

\begin{enumerate}
\item[a)] the meromorphic connection $\widetilde{\nabla}$ on the
bundle $\pi^*\mathcal{T}_M$ on $\mathbb{P}^1 \times M$ defined by
the formula
\begin{eqnarray*}
\widetilde{\nabla}=\pi^* { \nabla} + \frac{\pi^* \Phi}{z} -
\Big(\frac{\Phi(\mathcal{E})}{z} + \nabla
\mathcal{E}\Big)\frac{dz}{z}
\end{eqnarray*}
is integrable;

\item[b)] the field $e$ is $ \nabla$-horizontal (i.e., $\nabla e =
0$) and satisfies $\Phi_e = - \Id$ (i.e., the product $\circ$
associated to $\Phi$ has $e$ as a unit field).
\end{enumerate}
\end{definition}

\begin{definition}\label{definition: Saito structure with metric}
Let $M$ be a complex analytic manifold of dimension~$m$. A Saito
structure on $M$ with metric consists of a Saito structure $(
\nabla, \Phi, e, \mathcal{E})$ and of a metric $ g$ on the tangent
bundle, satisfying the following properties:
\begin{enumerate}
\item[(1)] $ \nabla g = 0$ (hence $ \nabla$ is the Levi-Civita
connection of $ g$);

\item[(2)] $\Phi^*=\Phi$, i.e., for any local section $X$ of
$\Theta_M$, $\Phi^*_X=\Phi_X$, where $^*$ denotes the adjoint
w.r.t. $ g$;

\item[(3)] there exists a complex number $d \in \mathbb{C}$ such
that
\begin{equation*}
{\nabla}\mathcal{E}+({\nabla}\mathcal{E})^*= (2-d) \cdot \Id;
\end{equation*}
\end{enumerate}
\end{definition}

\begin{proposition}[\cite{D,Sabb}]\label{proposition3}
On any manifold $M$, there is an equivalence between a Saito
structure with metric and a Frobenius structure.
\end{proposition}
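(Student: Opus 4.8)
The plan is to make the dictionary between the two structures explicit and then to match the axioms one at a time. Starting from a Saito structure with metric $(\nabla,\Phi,e,\mathcal{E},g)$ on $M$, I define a product on $\mathcal{T}_M$ by $X\circ Y:=-\Phi_X(Y)$ and keep $g,e,\mathcal{E}$ unchanged; conversely, from a Frobenius structure $(M,\circ,g,e,\mathcal{E})$ I let $\nabla$ be the Levi-Civita connection of $g$ and set $\Phi_X(Y):=-X\circ Y$, which is $\mathcal{O}_M$-linear in both arguments and hence a Higgs field. These two assignments are mutually inverse at the level of data, so the whole content of the proposition is the equivalence of the conditions.

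I would first dispose of the conditions that are pointwise or first order in nature. Condition~(1), $\nabla g=0$, together with torsion-freeness, says exactly that $\nabla$ is the Levi-Civita connection of $g$; then flatness of $\nabla$ is Frobenius axiom~a), and conversely a flat metric has a flat, torsion-free Levi-Civita connection. Condition~(2), $\Phi^*=\Phi$, unwinds through the definition of $\circ$ to $g(Y\circ X,Z)=g(X,Y\circ Z)$, which is Frobenius axiom~b). The normalization $\Phi_e=-\Id$ is precisely the assertion that $e$ is a unit for $\circ$, and $\nabla e=0$ is axiom~c) on both sides. That $\Phi$ is a \emph{symmetric} Higgs field translates, via $X\circ(Y\circ Z)=-\Phi_X\Phi_Y Z$ and the symmetry $\Phi_X Y=\Phi_Y X$, into commutativity and associativity of $\circ$; here one should keep track that it is the tensor-symmetry of $\Phi$ that yields commutativity and the relation $\Phi_X\Phi_Y=\Phi_Y\Phi_X$ that yields associativity.

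The heart of the argument is the integrability of the meromorphic connection $\widetilde\nabla$ on $\mathbb{P}^1\times M$. Expanding $\widetilde\nabla^2=0$ and grouping the resulting identities by pole order in $z$ and by whether $dz$ appears, one reads off: from the $dz$-free part, $\nabla^2=0$ (flatness), $\Phi\wedge\Phi=0$ (associativity), and the symmetry of the tensor $\nabla\Phi$ in its two covariant slots; from the $dz$-part, $\nabla(\nabla\mathcal{E})=0$, a redundant identity $[\Phi_X,\Phi(\mathcal{E})]=0$, and one genuinely new identity that, once flatness, $\Phi\wedge\Phi=0$, and the symmetry of $\nabla\Phi$ are in hand, is equivalent to $\mathcal{L}_{\mathcal{E}}(\circ)=\circ$. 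It then remains to match these with the Frobenius axioms not yet used. Setting $c(X,Y,Z):=g(X\circ Y,Z)=-g(\Phi_X Y,Z)$, a symmetric $3$-tensor by the above, a short computation using $\nabla g=0$ gives $(\nabla_W c)(X,Y,Z)=-g\big((\nabla_W\Phi)(X,Y),Z\big)$; since $c$ is already symmetric in $X,Y,Z$, the extra symmetry in $W$ demanded by Frobenius axiom~d) is exactly the symmetry of $\nabla\Phi$ in its covariant slots. Similarly, $\nabla g=0$ and torsion-freeness give $\mathcal{L}_{\mathcal{E}}(g)(X,Y)=g\big((\nabla\mathcal{E}+(\nabla\mathcal{E})^{*})X,Y\big)$, so condition~(3) is precisely \eqref{LEg} with $D=2-d$; \eqref{LEcirc} is the new $z$-identity above; and $\nabla(\nabla\mathcal{E})=0$ is \eqref{nablanablaE} (which, by Remark~\ref{remark SABB}, is anyway implied here by \eqref{LEg}). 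Running every equivalence in both directions, and checking that conversely the Frobenius axioms rebuild the integrability of $\widetilde\nabla$, completes the proof.

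The step I expect to be the main obstacle is this last one: organizing the expansion of $\widetilde\nabla^2=0$ into its homogeneous pieces, and verifying that the resulting system of tensor identities is, term by term and with the right signs and adjoints, the same as the Frobenius system --- in particular that nothing is lost or gained in passing between the symmetry of $\nabla\Phi$ and potentiality, or between the $z^{-2}$-identity and $\mathcal{L}_{\mathcal{E}}(\circ)=\circ$. A secondary, bookkeeping-level point is to make the notion of ``the product associated to a symmetric Higgs field'' precise enough that commutativity of $\circ$ is genuinely accounted for.
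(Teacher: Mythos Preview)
The paper does not supply a proof of this proposition: it is quoted with attribution to \cite{D,Sabb} and left unproved, serving only to justify the subsequent use of flat coordinates and potentials. There is therefore no ``paper's own proof'' to compare against.

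That said, your outline is the standard argument one finds in the cited references (notably the last chapter of \cite{Sabb}, as the paper itself signals in Remark~\ref{remark SABB}): translate $\Phi$ into $\circ$ via $X\circ Y=-\Phi_X Y$, match the metric conditions directly, and read off the Frobenius axioms from the homogeneous components of $\widetilde\nabla^{\,2}=0$. Your identification of the delicate step --- organizing the $z$-expansion and checking that the $dz/z^2$-piece gives exactly $\mathcal{L}_{\mathcal{E}}(\circ)=\circ$, with the symmetry of $\nabla\Phi$ corresponding to potentiality --- is accurate; this is where the bookkeeping is concentrated, but there is no conceptual obstacle, only careful tensor calculus. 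One small point worth making explicit when you write it up: the ``symmetric Higgs field'' condition in Definition~\ref{deifinition: Saito structure} as stated is only $\Phi_X\Phi_Y=\Phi_Y\Phi_X$, which gives associativity; commutativity of $\circ$ is the separate assertion $\Phi_X Y=\Phi_Y X$, which in the Saito framework is built into the torsion-freeness of $\nabla$ together with the $z^{-1}$-part of integrability (i.e.\ $\nabla\Phi$ symmetric forces $\Phi$ to be a symmetric tensor once one works in flat coordinates). You flag this as a ``secondary, bookkeeping-level point,'' and that is fair, but it does need to be spelled out.
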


Locally, given a Frobenius manifold structure on open subset $U
\subset \mathbb{C}^{m}$, then we can find a function $F=F(t)$,
$t=(t^1, t^2,\dots, t^m)$, such that its third derivatives
$$C_{ijk}:=\frac{\partial F}{\partial t^i \partial t^j \partial
t^k}$$ satisfy the following equations
\begin{enumerate}
\item[1)] Normalization:
$$g_{ij}:=C_{1ij}$$ is a constant non-degenerate
matrix. Let
$$(g^{ij}):=(g_{ij})^{-1}$$

\item[2)] Associativity: the functions
$${C_{ij}}^k:= \sum_{l} C_{ijl}
\cdot g^{lk}$$ define a commutative and associative algebra on
$T_t M$ by
$$\partial_{t^i} \circ \partial_{t^j}:= \sum_k C_{ij}^k
\partial_{t^k}$$

\item[3)] Homogeneity: The function $F$ must be quasi-homogeneous,
i.e.,
\[
\mathcal{L}_{\mathcal{E}} F = d_F \cdot F + \text{quadratic
terms},
\]
where $\mathcal{E}=\sum_{i,j} (q_i^j t^i + r^j )\partial_{t^j}$,
and $d_F \in \mathbb{C}$.
\end{enumerate}

If the eigenvalues of $\nabla \mathcal{E}$ are simple, then the
Euler vector field can be reduced to the form
$$\mathcal{E} = \sum_i d_i t^i \partial_{t^i} + \sum_{j \mid d_j =0} r_j \partial_{t^j}.$$
where all $r_j$ are complex numbers, and all $d_i$ are the
eigenvalues of $\nabla \mathcal{E}$. Moreover, if $g(e, e)=0$, we
have
\begin{proposition}[\cite{D}]\label{proposition4}
Let $M$ be a Frobenius manifold. Assume that $g(e, e)=0$ and that
the eigenvalues of $\nabla \mathcal{E}$ are simple. Then by a
linear change of coordinates t$_i$ the matrix $g_{ij}$ can be
reduced to the anti-diagonal form
\begin{align}
g_{ij}=\delta_{i+j,m+1};\label{gij}\\
e=\partial_{t^1}.\label{et1}.
\end{align}
and in these coordinates
\begin{align}
F(t)= \frac{1}{2} (t^1)^2 t^m + \frac{1}{2} (t^1)^2 \sum_i t^i
t^{m-i+1} + f(t^2, t^3,\dots, t^m).\label{LieF}
\end{align}
for some function, the sum
$$d_i + d_{m-i+1}$$ does not depend on $i$, and
$$d_F = 2d_1 + d_m.$$
If the degrees are normalized in such a way that $d_1 =1$ then
they can be represented in the form
$$d_i = 1-q_i;\quad d_F=3-d,$$
where $q_1, q_2,\dots, q_m,d$ satisfy
$$q_1 = 0,\quad q_m = d,\quad q_i
+q_{m-i+1} =d.$$
\end{proposition}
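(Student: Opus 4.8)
The plan is to build one adapted system of flat coordinates in which every assertion becomes transparent. First, since $\nabla$ is flat, I would take local flat coordinates $(t^1,\dots,t^m)$: then $g_{ij}=\<\partial_{t^i},\partial_{t^j}\>$ is a constant matrix, the unit field $e$ has constant components (it is $\nabla$-horizontal), and by \eqref{nablanablaE} the Euler field is affine, $\mathcal{E}=\sum_{i,j}(q^j_i t^i+r^j)\partial_{t^j}$, where $(q^j_i)$ is the matrix of the endomorphism $\nabla\mathcal{E}$ in the frame $(\partial_{t^i})$. A first linear change of coordinates turns the nowhere-vanishing flat field $e$ into $\partial_{t^1}$, and then $g_{11}=\<e,e\>=0$ by hypothesis; this is \eqref{et1}.

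Next I would pin down the spectral data of $\nabla\mathcal{E}$. Feeding $e\circ e=e$ into \eqref{LEcirc} gives $[\mathcal{E},e]=-e$, hence $\nabla_e\mathcal{E}=e$ (torsion-freeness and $\nabla e=0$); thus $e=\partial_{t^1}$ is an eigenvector of $\nabla\mathcal{E}$ of eigenvalue $d_1=1$, which is precisely the normalization in the statement. Since the eigenvalues of $\nabla\mathcal{E}$ are simple, I would pass to a $\nabla$-flat eigenframe $v_1=e,v_2,\dots,v_m$ with distinct eigenvalues $d_1=1,d_2,\dots,d_m$ --- a constant linear change of flat coordinates that preserves $e=\partial_{t^1}$. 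The decisive point is that $g$ pairs these eigenvectors with complementary eigenvalues: each $v_i$ being $\nabla$-flat, $[\mathcal{E},v_i]=-\nabla_{v_i}\mathcal{E}=-d_iv_i$ and $\<v_i,v_j\>$ is constant, so \eqref{LEg} forces $(d_i+d_j)\<v_i,v_j\>=D\<v_i,v_j\>$; as the $d_i$ are pairwise distinct and $g$ is non-degenerate, the rule $\<v_i,v_{\sigma(i)}\>\neq0$ defines an involution $\sigma$ with $d_i+d_{\sigma(i)}=D$ for every $i$. Relabelling the indices so that $\sigma(i)=m-i+1$ --- which forces $\sigma(1)=m$, i.e.\ $d_1+d_m=D$ --- and rescaling the $v_i$ accordingly produces $g_{ij}=\delta_{i+j,m+1}$, that is \eqref{gij}, and at the same time shows that $d_i+d_{m-i+1}=D$ does not depend on $i$. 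Putting $d:=2-D$ (the constant of Definition~\ref{definition: Saito structure with metric}(3), via Proposition~\ref{proposition3}) and $d_i=1-q_i$, the relations $q_1=0$, $q_m=d$ and $q_i+q_{m-i+1}=d$ come out immediately; and from $d_1=1$, $d_m=D-1=1-d$ one gets $d_F=2d_1+d_m=3-d$ once $d_F=2d_1+d_m$ is known (established below). Finally, in the eigenframe $\mathcal{E}=\sum_i d_it^i\partial_{t^i}+\sum_j r^j\partial_{t^j}$, and the translation $t^j\mapsto t^j+r^j/d_j$ for $d_j\neq0$ --- harmless for the flat structure, for $g$ and for $e$ --- reduces $\mathcal{E}$ to $\sum_i d_it^i\partial_{t^i}+\sum_{d_j=0}r_j\partial_{t^j}$, the asserted normal form.

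It remains to recover $F$. From the local description recalled before the proposition, $C_{ijk}=\partial_{t^i}\partial_{t^j}\partial_{t^k}F$ equals $g(\partial_{t^i}\circ\partial_{t^j},\partial_{t^k})$; taking $i=1$ and using $e=\partial_{t^1}$ yields $\partial_{t^1}\partial_{t^j}\partial_{t^k}F=g_{jk}=\delta_{j+k,m+1}$, so in particular $\partial_{t^1}^3 F=0$, $\partial_{t^1}^2\partial_{t^j}F=\delta_{j,m}$ and $\partial_{t^1}\partial_{t^j}\partial_{t^k}F=\delta_{j+k,m+1}$ for $j,k\ge2$. Integrating these constant relations in $t^1$ and absorbing into $F$ the quadratic polynomial it is only defined up to, I would get
\[
F=\tfrac12(t^1)^2t^m+\tfrac12\,t^1\sum_{i=2}^{m-1}t^it^{m-i+1}+f(t^2,\dots,t^m),
\]
which is \eqref{LieF}, with $f$ the integration constant. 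For the weights: $C_{11m}=g_{1m}=1$ is a nonzero constant, and applying $\partial_{t^1}^2\partial_{t^m}$ to $\mathcal{L}_{\mathcal{E}}F=d_F F+(\text{quadratic})$ --- whose effect on third derivatives is $\mathcal{L}_{\mathcal{E}}C_{ijk}=(d_F-d_i-d_j-d_k)C_{ijk}$ --- forces $d_F-2d_1-d_m=0$, which is what was used above.

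The one step I expect to need care is the second paragraph: that a single linear change of flat coordinates can at the same time diagonalize $\nabla\mathcal{E}$, keep $e=\partial_{t^1}$ and bring $g$ to anti-diagonal form. This works precisely because $e$ is already an eigenvector of $\nabla\mathcal{E}$ and because, given $\nabla g=0$, the homogeneity \eqref{LEg} forces $g$ to pair the (necessarily one-dimensional) eigenspaces of $\nabla\mathcal{E}$ in complementary pairs; everything else is bookkeeping and one integration.
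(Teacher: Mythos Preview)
The paper does not supply its own proof of this proposition: it is stated with a citation to Dubrovin \cite{D} and used thereafter as a known normal-form result. There is therefore no in-paper argument to compare against.

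Your sketch is a correct reconstruction of the standard proof. The key steps --- that $e$ is an eigenvector of $\nabla\mathcal{E}$ with eigenvalue $1$ (from $\mathcal{L}_{\mathcal{E}}(\circ)=\circ$ applied to $e\circ e=e$), that \eqref{LEg} forces $(d_i+d_j-D)\<v_i,v_j\>=0$ in a flat eigenframe so that $g$ pairs distinct eigenspaces in an involution $\sigma$, and the integration of $C_{1jk}=g_{jk}$ to obtain the shape of $F$ --- are all sound. Two small points worth tightening in a non-sketch version: first, the relabelling $\sigma(i)=m+1-i$ uses a parity remark you leave implicit (since the $d_i$ are distinct, $\sigma$ has at most one fixed point, hence exactly one when $m$ is odd and none when $m$ is even, and $\sigma(1)\neq1$ because $g(e,e)=0$); second, the rescaling that normalises $g$ to the anti-diagonal must keep $\lambda_1=1$ to preserve $e=\partial_{t^1}$, which fixes $\lambda_m$ and is compatible with the middle index when $m$ is odd since we work over $\mathbb{C}$. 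Your derivation of $d_F=2d_1+d_m$ from $C_{11m}=1$ is clean and correct. Note also that the displayed formula \eqref{LieF} in the paper has a typographical slip (an extra factor of $t^1$ in the second summand); the version you wrote is the correct one.
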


So, under the assumption of Proposition \ref{proposition4}, we can
choose a flat holomorphic local coordinates $t^1, t^2,\dots, t^m$
of $M$ such that $g_{ij}=\delta_{i+j,m+1}$, $e=
\partial_{t^1}$ and
\begin{align}
\mathcal{E}=\sum_{i}d_{i}t^{i}\partial_{t^i}+ \sum_{i |d_i =0}
r^{i} \partial_{t^i};\label{E}\\
d_1 =1;\label{d1}\\
d_i + d_{m+1-i}=2-d;\label{did}\\
d_F = 3-d.\label{dFd}
\end{align}

\subsection{CV-structure}\label{subsection1b}
In \cite{Hert2} Hertling considers the notion of a CV-structure on
any $C^{\infty}$ vector bundle $K\rightarrow M$. He also considers
the notion of a CDV-structure on a manifold, which is a
CV-structure on the tangent bundle of a Frobenius manifold $M$
satisfying some compatibility conditions. We now recall these
structures and their basic properties.

\begin{definition}[\cite{Hert2}]\label{D structure}
Let $M$ be a complex analytic manifold. A $DC\tilde{C}$-structure
is a $C^{\infty}$ vector bundle $K \rightarrow M$ together with a
connection $D$ on it and two $C_M^{\infty}$-linear maps
\begin{align*}
C:C^{\infty}(K) &\longrightarrow \mathcal{A}_M^{1,0} \otimes
C^{\infty}(K);\\
\tilde{C} :C^{\infty}(K) &\longrightarrow \mathcal{A}_M^{0,1}
\otimes C^{\infty}(K)
\end{align*}
with the following properties. Let $D'$ and $D^{''}$ be the
$(1,0)$-part and the $(0,1)$-part of $D$, then
\begin{align}
(C+D^{''})^2=0, (\tilde{C}+D')^2=0; \label{holoC}\\
D'(C)=0, D^{''}(\tilde{C})=0; \label{tt1}\\
D'D^{''}+D^{''}D'=-(C \tilde{C}+\tilde{C} C). \label{tt2}
\end{align}
\end{definition}

\begin{remark}\label{remark tt}
\begin{enumerate}
\item[a)] In this definition, the equations \eqref{tt1} and
\eqref{tt2} are called tt*-equations in \cite{CV3, CV2, CV1}.

\item[b)] by \eqref{holoC}, we have a family flat $(0,
1)$-connections $D''+z\tilde C$ on $K$ for any $z\in\mathbb C$, so
$K$ comes equipped with a family of holomorphic structures.
\end{enumerate}
\end{remark}

\begin{definition}[\cite{Hert2}]\label{CV structure}
Let $M$ be a complex analytic manifold. A CV-structure is a
quadruple $(K\rightarrow M, D, C, \tilde{C},\kappa, h,
\mathcal{U}, \mathcal{Q})$ such that $(K\rightarrow M, D, C,
\tilde{C})$ is a $DC\tilde{C}$-structure, and the other objects
have the following properties:
\begin{enumerate}
\item[a)] $\kappa$ is a fiberwise $\mathbb{C}$-anti-linear
automorphism of $K$ as a $C^{\infty}$- bundle with
\begin{align}
\kappa^2&=\Id; \label{involution}\\
D(\kappa)&=0; \label{flatk}\\
\kappa C \kappa &= \tilde{C}. \label{CC}
\end{align}

\item[b)] $h$ is a Hermitian pseudo-metric on $K$; that is, it is
linear on the left, semi-linear on the right, non-degenerate, and
satisfies $h(b,a)= \overline{h(a,b)}$. It also has the three
properties:
\begin{align}
&\text{$h$ takes real values on $K_{\mathbb{R}}:=
\ker(\kappa - \Id) \subset K$}; \label{real h}\\
&h(C_X a, b)=h(a, \tilde{C}_{\overline{X}}b)\quad \text{for }a, b \in C^{\infty}(K),\ X \in \mathcal{T}_M^{1,0}; \label{duality}\\
&D(h)=0. \label{flath}
\end{align}

\item[c)] $\mathcal{U}$ and $\mathcal{Q}$ are
$C_M^{\infty}$-linear endomorphism of $K$ with
\begin{align}
[C, \mathcal{U}]=0; \label{CU}\\
D'(\mathcal{U})- [C, \mathcal{Q}]+C=0; \label{UCQ}\\
D^{''}(\mathcal{U})=0; \label{U}\\
D'(\mathcal{Q})+ [C,\kappa \mathcal{U} \kappa]=0; \label{QCU}\\
\kappa \mathcal{Q} \kappa + \mathcal{Q}=0; \label{Qkappa}\\
h(\mathcal{U} a, b)=h(a, \kappa \mathcal{U} \kappa b); \label{UU}\\
h(\mathcal{Q} a, b)=h(a, \mathcal{Q} b). \label{QQ}
\end{align}
\end{enumerate}
\end{definition}

\begin{remark}
Given any CV-structure $(K \rightarrow M, D, C, \tilde{C},\kappa,
h, \mathcal{U}, \mathcal{Q})$, if $h$ is positive, then $(K
\rightarrow M, D, C, \tilde{C},\kappa, h, \mathcal{U},
\mathcal{Q})$ is called \emph{positive CV-structure}, denote by
CV$\oplus$-structure.
\end{remark}

If we combine the Frobenius manifold structure and CV-structure
together, then we get the following structure.

\begin{definition}[\cite{Hert2}]\label{CDV-structure}
Let $M$ be a complex analytic manifold. A CDV-structure on $M$ is
a CV-structure $(\mathcal{T}_M^{(1, 0)} \rightarrow M, D, C,
\tilde{C},\kappa, h, \mathcal{U}, \mathcal{Q})$, together with a
Frobenius manifold structure $(M, \circ, e, \mathcal{E}, g)$ such
that $C_X Y=- X\circ Y$, $\mathcal{E}=\mathcal{U}(e)$, and
\begin{equation*}
\mathcal{Q}=D_{\mathcal{E}}-\mathcal{L}_{\mathcal{E}}-\frac{2-d}{2}
\Id,
\end{equation*}
for some $d \in \mathbb{R}$ and such that the following equivalent
conditions hold
\begin{align}
D_e - \mathcal{L}_e=0;\label{ee}\\
\Leftrightarrow D_e e=0;\label{dee}\\
\Leftrightarrow \mathcal{L}_e(h)=0 \Leftrightarrow \mathcal{L}_{\overline{e}}(h)=0;\label{eh}\\
\Leftrightarrow \mathcal{L}_e(\kappa)=0 \Leftrightarrow
\mathcal{L}_{\overline{e}}(\kappa)=0.\label{ekappa}
\end{align}
\end{definition}

Given a Frobenius manifold, giving a CDV-structure on it amounts
to giving a real structure on $\mathcal{T}_M^{(1, 0)}$ satisfying
some compatibility conditions made precise in the following
proposition.

\begin{proposition}[\cite{Hert2}]\label{CDV-structure2}
Let $M$ be a complex analytic manifold. A CDV-structure on $M$ is
a Frobenius manifold $(M, \circ, e, \mathcal{E}, g)$ together with
a real structure on $\mathcal{T}_M^{(1, 0)}$ given by a fiber-wise
$\mathbb{C}$-anti-linear involution $\kappa : \mathcal{T}_M^{(1,
0)} \rightarrow \mathcal{T}_M^{(1, 0)}$ such that the following
holds:

\begin{enumerate}
\item\label{CDV-structure21} Extend $g$ to the complex bundle
$\mathcal{T}_M^{(1, 0)}$. The form $h:=g(\cdot, \kappa \cdot)$ is
a Hermitian pseudo-metric and satisfies
\begin{align}
h(C_X Y, Z)&=h(Y, \kappa C_X \kappa Z ),\quad \text{for }X, Y, Z \in \mathcal{T}_M;\label{Ch2}\\
\mathcal{L}_e (h)&=0;\label{eh2}\\
\mathcal{L}_{\mathcal{E}-\overline{\mathcal{E}}}(h)&=0;\label{EEh2}
\end{align}
\item The metric connection $D$ on $\mathcal{T}_M^{(1, 0)}$ for
$h$ respects $\kappa$, i.e. $D(h)=0$ and $D(\kappa)=0$. \item The
number $d$ such that $\mathcal{L}_{\mathcal{E}}(g)= (2-d) \cdot g$
is real. \item Let $\mathcal{Q}$ be the endomorphism of $\mathcal
T_M^{(1,0)}$ as real analytic complex vector bundle defined by
$$\mathcal{Q}:= D_{\mathcal{E}}- \mathcal{L}_{\mathcal{E}}-\frac{2-d}{2}
\Id,$$ and let $\pi:\mathbb C\times M\to M$ be the projection.
Lift $D$ and $\mathcal{Q}$ canonically to $\pi^* \mathcal
T_M^{(1,0)}$. Then
$$\nabla^{CV}:= D+\frac{1}{z} C + z \kappa C \kappa + \Big(\frac{1}{z} \mathcal{U} -\mathcal{Q} -z \kappa \mathcal{U}
\kappa\Big)\frac{dz}{z}$$ is a flat meromorphic connection on
$\pi^* \mathcal T_M^{(1,0)} \vert_{\mathbb{C}^* \times M}$.
\end{enumerate}
\end{proposition}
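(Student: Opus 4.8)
The plan is to set up an explicit dictionary between the data of a CDV-structure in the sense of Definition \ref{CDV-structure} and the data described here, and then to verify that under this dictionary each axiom of one description is equivalent to the corresponding axiom --- or package of axioms --- of the other. On the Frobenius side one keeps $(M,\circ,e,\mathcal E,g)$ and the antilinear involution $\kappa$. On the CV side one sets $C_XY:=-X\circ Y$, $\tilde C:=\kappa C\kappa$, $h:=g(\,\cdot\,,\kappa\,\cdot\,)$ (in a CDV-structure the CV-metric $h$ and the Frobenius metric $g$ are linked in exactly this way), lets $\mathcal U$ be multiplication by $\mathcal E$, lets $D$ be the Chern connection of $h$ with respect to the holomorphic tangent bundle structure on $\mathcal T_M^{(1,0)}$, and puts $\mathcal Q:=D_{\mathcal E}-\mathcal L_{\mathcal E}-\frac{2-d}{2}\Id$ with $d$ given by $\mathcal L_{\mathcal E}(g)=(2-d)g$. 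I would first check that $h$ so defined is a Hermitian pseudo-metric, real on $K_{\mathbb R}=\ker(\kappa-\Id)$, and satisfies $h(\kappa a,\kappa b)=\overline{h(a,b)}$; this is immediate from $\kappa^2=\Id$, the $\kappa$-antilinearity and the symmetry of $g$, and accounts for part (1) up to its three displayed identities.

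Next I would dispatch the purely algebraic compatibilities. Equation \eqref{involution} says $\kappa$ is a real structure, and \eqref{CC} is the definition of $\tilde C$. Under the substitutions $h=g(\,\cdot\,,\kappa\,\cdot\,)$ and $C=-\circ$, equation \eqref{Ch2} (equivalently \eqref{duality}) becomes the $\circ$-invariance $g(X\circ Y,Z)=g(Y,X\circ Z)$ together with $\kappa^2=\Id$; \eqref{CU} is commutativity and associativity of $\circ$; \eqref{UU} is again the $\circ$-invariance of $g$. For \eqref{QQ} and \eqref{Qkappa} one uses that $\mathcal Q$ is $g$-skew --- because $D$ and the Levi-Civita connection both preserve $g$, and $\nabla\mathcal E+(\nabla\mathcal E)^{*}=(2-d)\Id$ --- together with the homogeneity of $\mathcal E$ and the reality of $d$; that reality is precisely part (3), which in one direction is a hypothesis of Definition \ref{CDV-structure} and in the other is forced by \eqref{EEh2}. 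Condition \eqref{eh2} is literally condition \eqref{eh}; condition \eqref{EEh2}, i.e. $\mathcal L_{\mathcal E-\overline{\mathcal E}}(h)=0$, is extracted from $\mathcal L_{\mathcal E}(g)=(2-d)g$, its complex conjugate and $h=g(\,\cdot\,,\kappa\,\cdot\,)$, and conversely gives them back. Finally $\mathcal E=\mathcal U(e)$ and the formula for $\mathcal Q$ hold by construction, while $e$ being the unit of $\circ$ gives $C_e=-\Id$ automatically.

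The core of the argument is part (4): flatness of $\nabla^{CV}$ must be shown equivalent to the remaining, differential, axioms --- those of a $DC\tilde C$-structure, namely \eqref{holoC}, \eqref{tt1}, \eqref{tt2}, together with \eqref{UCQ}, \eqref{U} and \eqref{QCU}. For this I would compute the curvature of $\nabla^{CV}=D+\frac{1}{z}C+z\kappa C\kappa+\bigl(\frac{1}{z}\mathcal U-\mathcal Q-z\kappa\mathcal U\kappa\bigr)\frac{dz}{z}$ on $\mathbb C^{*}\times M$ and expand it as a Laurent polynomial in $z$, separating the part that is a $2$-form on $M$ from the part that is a $1$-form on $M$ wedged with $\frac{dz}{z}$. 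Vanishing of the $M$-part, coefficient by coefficient in $z$, reproduces exactly \eqref{holoC}, \eqref{tt1} and \eqref{tt2} --- here $D(\kappa)=0$ converts the $\kappa C\kappa$-terms into the $\tilde C$-statements --- and vanishing of the mixed part reproduces \eqref{CU}, \eqref{UCQ}, \eqref{U}, \eqref{QCU} together with their $\kappa$-conjugates, which by $D(\kappa)=0$ and $d\in\mathbb R$ carry no new information. Conversely, feeding the CV-structure axioms into the same expansion shows $\nabla^{CV}$ is flat.

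The step I expect to be the main obstacle is this curvature bookkeeping for $\nabla^{CV}$, a connection with poles at $z=0$ and $z=\infty$: one must check that each Laurent coefficient yields precisely one listed equation and nothing more, and --- the crucial point tying part (2) to part (4) --- that the $(0,1)$-part $D''$ of the Chern connection $D$ is indeed the holomorphic structure for which \eqref{holoC} holds, i.e. that the holomorphic bundle defined by $D''$ coincides with the usual holomorphic tangent bundle. A secondary point needing care is the internal equivalence of \eqref{ee}--\eqref{ekappa} (which is what allows one to impose only \eqref{eh2}) and the exact derivation of \eqref{EEh2} from the homogeneity of $g$ and the reality of $d$.
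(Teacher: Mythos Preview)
The paper does not supply its own proof of this proposition: it is quoted directly from \cite{Hert2} and stated without argument, so there is nothing in the paper to compare your proposal against line by line. That said, your overall architecture --- set up the dictionary $h=g(\cdot,\kappa\cdot)$, $\tilde C=\kappa C\kappa$, $\mathcal U=\mathcal E\circ$, take $D$ to be the Chern connection, and then expand the curvature of $\nabla^{CV}$ as a Laurent polynomial in $z$ to recover the $DC\tilde C$ and $\mathcal U,\mathcal Q$ axioms --- is the standard one and is exactly the circle of ideas the paper itself exploits later (see the proof of Theorem~\ref{suff cond}, especially Lemma~\ref{lemma 2th C}, Claim~\ref{claim0}, Claim~\ref{claim01}, and Lemma~\ref{lemma J}).

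One point in your write-up is not quite right and would trip you up if you tried to carry it out. You say that \eqref{EEh2} ``is extracted from $\mathcal L_{\mathcal E}(g)=(2-d)g$, its complex conjugate and $h=g(\cdot,\kappa\cdot)$, and conversely gives them back.'' That is not how \eqref{EEh2} arises: $\mathcal L_{\mathcal E}(g)=(2-d)g$ controls only the $g$-skewness $\mathcal Q+\mathcal Q^{*}=0$ (this is Claim~\ref{claim01} in the paper), whereas \eqref{EEh2} is the $h$-selfadjointness $\mathcal Q=\mathcal Q^{\dag}$, i.e.\ axiom \eqref{QQ}. These are genuinely different conditions unless one already knows $D(\kappa)=0$, and neither follows from the homogeneity of $g$ alone. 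Lemma~\ref{lemma I} in the paper makes this explicit: in flat coordinates \eqref{EEh2} reads $(\mathcal E-\overline{\mathcal E})h_{ij}=(d_j-d_i)h_{ij}$, which is a constraint on $h$ (equivalently on $\kappa$), not a consequence of the Frobenius data. So in your dictionary you should pair \eqref{EEh2} with \eqref{QQ}, and pair the reality of $d$ in part~(3) with \eqref{Qkappa} via $\mathcal Q+\mathcal Q^{*}=0$ and $\mathcal Q=\mathcal Q^{\dag}$ together; your current pairing conflates these. Apart from this bookkeeping slip, your plan is sound.
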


\subsection{K\"ahler manifolds}\label{subsec:Kahler}
We recall classical results in order to fix notation (see e.g.\
\cite{CV}). Any complex analytic manifold $M$ comes equipped with
an almost complex manifold $(M_{\mathbb{R}}, J)$, where
$M_{\mathbb{R}}$ is the underlying real manifold of $M$, and $J$
is the complex structure induced by $i$. We get two complex vector
bundles on $M_{\mathbb{R}}$: on one hand, $(\mathcal{T}_M^{(1,
0)}, i)$ is a complex vector bundle on $M_{\mathbb{R}}$; on the
other hand, $(T_{M_{\mathbb{R}}}, J)$ is another vector bundle,
where $T_{M_{\mathbb{R}}}$ is the real tangent bundle of
$M_{\mathbb{R}}$. These two complex vector bundles are isomorphic.
Let us recall the isomorphism.

Let $z^1, z^2,\dots, z^m$ be a system of holomorphic local
coordinates on $M$, and set
$$z^j= x^j + i y^j,\quad \forall j.$$ Then $x^1,\dots,
x^m, y^1,\dots, y^m$ is a system of real local coordinates of
$M_{\mathbb{R}}$ and
\begin{equation*}
\partial_{z^j} = \frac{1}{2}(\partial_{x^j}- i \partial_{y^j}).
\end{equation*}
On $T_{M_{\mathbb{R}}}$, $J$ is determined by
\begin{equation*}
J \partial_{x^j}= \partial_{y^j},\quad J \partial_{y^j}= -
\partial_{x^j}, \forall j.
\end{equation*}
Define a map $\mathcal{R}e$ as follows:
$$\mathcal{R}e: (\mathcal{T}_M^{(1, 0)}, i)\rightarrow (T_{M_{\mathbb{R}}}, J),$$
$$\mathcal{R}e(\partial_{z^j}):= (\partial_{x^j}),\quad \mathcal{R}e(i \partial_{z^j}):= (\partial_{y^j}),\quad \forall j.$$ We note that
the map $\mathcal{R}e$ defined above is an isomorphism of real
vector bundles on $M_{\mathbb{R}}$, and satisfies
$$\mathcal{R}e \circ i = J \circ \mathcal{R}e,$$
where $\circ$ is the composition of endomorphisms. So it is an
isomorphism of complex vector bundles.

Moreover, $\mathcal{R}e$ induces a dual isomorphism
$$\mathcal{R}e^{*} :\Omega_{M_{\mathbb{R}}}^1 \rightarrow
\mathcal{A}_{M}^{(1,0)},\quad dx^j \mapsto dz^j,\; dy^j \mapsto -
i dz^j,\quad \forall j.$$ So we have an induced isomorphism from
$\mathcal{R}e$ and ${\mathcal{R}e^{*}}^{-1}$:
$$\widetilde{\mathcal{R}e}:= \mathcal{R}e \otimes {\mathcal{R}e^{*}}^{-1} :\mathcal{A}_{M}^{(1,0)}\otimes \mathcal{T}_{M}^{(1,0)} \rightarrow
\Omega_{M_{\mathbb{R}}}^1 \otimes T_{M_{\mathbb{R}}}.$$

Any Hermitian pseudo-metric $h$ on $M$ can be decomposed as $$h =
\hat{g}- i \hat{\omega},$$ where $\hat{g}$ (resp. $-\hat{\omega}$)
is the real part (resp.\ imaginary part) of $h$. Then $\hat{g}$ is
a Riemannian pseudo-metric(that is, a symmetric, non-degenerate
bilinear form) and $\hat{\omega}$ is a real $(1, 1)$-form on
$M_{\mathbb{R}}$. $(M_{\mathbb{R}}, \hat{\omega})$ is called a
symplectic manifold if $d \hat{\omega}=0$. $(M, h)$ is called
K\"ahler if $d \hat\omega=0$ and $h$ is positive-definite. The
Levi-Civita connection of $(T_{M_{\mathbb{R}}}, \hat{g})$ is
denoted by $\hat{\nabla}$, and the Chern connection of
$(\mathcal{T}_{M}^{(1, 0)}, h)$ by $D'$.

Theorem 3.13 in \cite{CV} gives a characterization of K\"ahler
metrics: $h$ is K\"ahler if and only if the Chern connection $D'$
and the Levi-Civita connection $\hat{\nabla}$ coincide on
$\mathcal{T}_{M}^{(1, 0)}$, identified with $T_{M_{\mathbb{R}}}$
via the map $\mathcal{R}e$. That means
\begin{equation}\label{eq:Re}
\widetilde{\mathcal{R}e}(D' V)= \hat{\nabla} \mathcal{R}e(V),\quad
\forall V\in \mathcal{T}_{M}^{(1, 0)}.
\end{equation}

\section{Main results}\label{subsection1c}
We first give a simple necessary and sufficient condition on a
real structure $\kappa$ to produce a CDV-structure on a Frobenius
manifold $M$.

\begin{theorem}\label{suff cond}
Let $(M, g, \circ, e, \mathcal{E})$ be a Frobenius manifold, and
let $\kappa$ be an anti-linear involution of
$\mathcal{T}_M^{(1,0)}$ such that $h(a, b):=g(a, \kappa b)$
satisfies $h(a, b)=\overline{h(b, a)}$. Let us denote by $D_h =
D'+ \overline{\partial}$ the Chern connection of~$h$ and by $d$
the real number such that $\mathcal{L}_{\mathcal{E}}(g)=(2-d)g$.
Let us set $\mathcal{Q}:= D'_{\mathcal{E}}-
\mathcal{L}_{\mathcal{E}} - \frac{2-d}{2} \cdot \Id$ and let us
define $\Phi$ by ${\Phi_X}{Y} := -X \circ Y$. We also denote by
$\mathcal{Q}^\dag$ and $\Phi^\dag$ the $h$-adjoints of
$\mathcal{Q}$ and $\Phi$.

Assume moreover that
\begin{align}
\mathcal{Q}&= \mathcal{Q}^{\dag};\label{Q}\\
D'(\Phi)&=0;\quad D'\overline{\partial}+ \overline{\partial}D'=
-(\Phi \wedge \Phi^{\dag}+\Phi^{\dag} \wedge
\Phi).\label{harmonic}
\end{align}
Then $(M, g, \circ, e, \mathcal{E}, \kappa)$ is a CDV-structure.
\end{theorem}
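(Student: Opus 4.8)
The plan is to verify the hypotheses of Proposition~\ref{CDV-structure2}, item by item, using the structure of a Frobenius manifold together with the three extra assumptions \eqref{Q} and \eqref{harmonic}. Recall that for a Frobenius manifold the Levi-Civita connection $\nabla$ of $g$ is flat and torsion free, the product $\circ$ is $\nabla$-flat in the sense of condition~d), and $\Phi_X Y=-X\circ Y$ defines a symmetric Higgs field with $\Phi_e=-\Id$. Since $\kappa$ is $g$-compatible in the sense that $h:=g(\cdot,\kappa\cdot)$ is Hermitian, the first task is to produce the Chern connection $D=D'+\overline\partial$ of $h$ and to check $D(\kappa)=0$; this is a general fact about the metric connection attached to a Hermitian metric and a compatible real structure, and it gives item~(2) of Proposition~\ref{CDV-structure2} essentially for free.

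Next I would address item~(1). The identity \eqref{Ch2} reads $h(C_XY,Z)=h(Y,\kappa C_X\kappa Z)$; unwinding the definitions $C_X=\Phi_X$ and $h=g(\cdot,\kappa\cdot)$, this is equivalent to the symmetry $g(X\circ Y,\kappa W)=g(Y,\kappa(X\circ W))$ type relation, which should follow from $g(X\circ Y,W)=g(Y,X\circ W)$ (Frobenius condition~b)) once one keeps careful track of $\mathbb C$-antilinearity of $\kappa$ and the fact that $\kappa$ intertwines $C$ and $\tilde C$. The conditions \eqref{eh2} and \eqref{EEh2}, namely $\mathcal L_e(h)=0$ and $\mathcal L_{\mathcal E-\overline{\mathcal E}}(h)=0$, I would derive from $\nabla e=0$ and from the homogeneity property \eqref{LEg}, i.e.\ $\mathcal L_{\mathcal E}(g)=(2-d)g$ with $d$ real (which also settles item~(3)); the point is that $\mathcal L_{\mathcal E}(g)$ and $\mathcal L_{\overline{\mathcal E}}(h)$ combine so that the non-holomorphic parts cancel. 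Here the reality of $d$, assumed in the statement, is exactly what is needed.

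The substance of the proof is item~(4): showing that $\nabla^{CV}:=D+\frac1zC+z\kappa C\kappa+\bigl(\frac1z\mathcal U-\mathcal Q-z\kappa\mathcal U\kappa\bigr)\frac{dz}z$ is flat on $\mathbb C^*\times M$, where $\mathcal U$ is multiplication by $\mathcal E$ (so that $\mathcal E=\mathcal U(e)$) and $\mathcal Q$ is as defined. Expanding the curvature of $\nabla^{CV}$ in powers of $z$ produces a finite collection of tensor identities: the $z^{-2}$ and $z^{2}$ terms give $C\wedge C=0$ and $\kappa C\kappa\wedge\kappa C\kappa=0$ (commutativity/associativity of $\circ$, which hold on a Frobenius manifold); the $z^{-1},z^0,z^1$ terms in the $M$-directions give $D'(C)=0$, the tt*-equation $D'\overline\partial+\overline\partial D'=-(C\tilde C+\tilde C C)$, and $\overline\partial(\kappa C\kappa)=0$ — precisely the content of \eqref{harmonic} after identifying $\Phi=-C$ and $\Phi^\dag$ with $\tilde C=\kappa C\kappa$ (note $h(C_Xa,b)=h(a,\tilde C_{\overline X}b)$ makes $\tilde C$ the $h$-adjoint of $C$, matching $\Phi^\dag$); and the terms involving $\frac{dz}z$ reproduce the commutator relations \eqref{CU}--\eqref{QCU} among $C,\mathcal U,\mathcal Q,\kappa$. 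The relations $[C,\mathcal U]=0$, $D'(\mathcal U)-[C,\mathcal Q]+C=0$, $D''(\mathcal U)=0$, $D'(\mathcal Q)+[C,\kappa\mathcal U\kappa]=0$ should follow from the integrability of the \emph{holomorphic} Saito connection $\widetilde\nabla$ (Definition~\ref{deifinition: Saito structure}, condition~a)), which is already guaranteed by the Frobenius structure, together with the definition of $\mathcal Q$ via $D'_{\mathcal E}-\mathcal L_{\mathcal E}-\frac{2-d}{2}\Id$; one translates the $z$-expansion of $\widetilde\nabla$'s flatness into these identities. The self-adjointness $\mathcal Q=\mathcal Q^\dag$ assumed in \eqref{Q} is then what upgrades the holomorphic data to the CV-side symmetry \eqref{QQ}, and combined with $\kappa\mathcal Q\kappa+\mathcal Q=0$ (which also needs checking, and follows from how $D$ and $\mathcal L$ interact with $\kappa$) completes the list.

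I expect the main obstacle to be bookkeeping the $\mathbb C$-antilinearity of $\kappa$ consistently through every adjunction: the identifications $\tilde C=\kappa C\kappa$, $\Phi^\dag=\tilde C$, $\mathcal Q^\dag=\mathcal Q$, and the precise form of $D=D'+\overline\partial$ as the Chern connection must all be reconciled so that the $z$-graded pieces of the curvature of $\nabla^{CV}$ land exactly on \eqref{Q} and \eqref{harmonic} and on the automatically-satisfied Frobenius/Saito identities, with nothing left over. A secondary subtlety is checking the equivalent normalization conditions \eqref{ee}--\eqref{ekappa} of Definition~\ref{CDV-structure}, e.g.\ $D_e e=0$ or equivalently $\mathcal L_e(\kappa)=0$; this should drop out of $\nabla e=0$ together with $D(\kappa)=0$ and the fact that $e$ is $\Phi$-flat, but it requires one more short computation comparing $D_e$ with $\mathcal L_e$ on the real structure.
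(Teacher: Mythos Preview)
Your overall strategy---verify the equivalent characterization of a CDV-structure by expanding the curvature of $\nabla^{CV}$ in powers of $z$---matches the paper's, and most of your identifications are correct. But there is one genuine gap.

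You claim the relation $D'(\mathcal{Q})+[C,\kappa\mathcal{U}\kappa]=0$ (one of the $\tfrac{dz}{z}$--coefficients) ``should follow from the integrability of the holomorphic Saito connection $\widetilde\nabla$ \dots\ together with the definition of~$\mathcal{Q}$.'' It does not. The Saito connection is built from $\nabla$, not $D'$, and never sees~$\kappa$; its flatness yields identities among $\nabla$, $\Phi$, $\mathcal U$ and $\nabla\mathcal E$, but says nothing about $D'(\mathcal{Q})$ or $\kappa\mathcal{U}\kappa$. The identity in question genuinely couples the holomorphic and anti-holomorphic sides, and in the paper it is the main technical step (Lemma~\ref{lemma J}): one first uses the tt*-equation (the second half of \eqref{harmonic}) evaluated on $(X,\overline{\mathcal E})$ to rewrite
\[
-[\Phi_X,\kappa\mathcal U\kappa]=[\Phi_X,\Phi^{\dag}_{\overline{\mathcal E}}]=[\overline\partial_{\overline{\mathcal E}},D'_X]+(D'+\overline\partial)_{[X,\overline{\mathcal E}]},
\]
and then, by differentiating $h(\mathcal Q X,Y)=h(X,\mathcal Q Y)$ and using $D(h)=0$, one shows that $D'_X(\mathcal Q)$ equals the same right-hand side. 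Thus \emph{both} hypotheses \eqref{Q} and \eqref{harmonic} are consumed here. In your outline you have budgeted \eqref{Q} only for the symmetry $h(\mathcal Q a,b)=h(a,\mathcal Q b)$, which leaves this relation unproved.

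A smaller omission: among the $z^0$ pieces you also need $(D')^2=0$, which is not part of \eqref{harmonic} (that hypothesis only controls the mixed curvature $D'\overline\partial+\overline\partial D'$). The paper gets $(D')^2=0$ from $D(\kappa)=0$: since $D'\kappa=\kappa\,\overline\partial$, one has $(D')^2\kappa=\kappa\,\overline\partial^2=0$, and $\kappa$ is invertible. Your sketch of $D(\kappa)=0$ as ``essentially for free'' is in fact correct---it follows from $h=g(\cdot,\kappa\cdot)$ with $g$ holomorphic and $\kappa^2=\Id$, exactly as in the paper's matrix computation---so once you record that, $(D')^2=0$ drops out.

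Aside from these points your plan parallels the paper's: the paper verifies the axioms of Definition~\ref{CV structure} directly and then proves $D'_ee=0$ (using $D'(\Phi)=0$ and the consequence $D'(g)=0$), while you phrase things via Proposition~\ref{CDV-structure2}; the content is the same.
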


Clearly, the assumptions of Theorem \ref{suff cond} are necessary
to get a CDV-structure. When a Frobenius manifold is trivial (cf.
Definition \ref{def:trivial} below), we can find the discussion of
the Hermitian metric in Dubrovin's paper \cite{Dubro}. As a
consequence, we show that CDV$\oplus$-structures exist on all
semi-simple Frobenius manifolds by giving an explicit example:

\begin{theorem}\label{existence}
Let $(M, g, \circ, e, \mathcal{E})$ be a semi-simple Frobenius
manifold. Let $(u^{1}, u^{2},\dots, u^{m})$ be a system of
canonical local coordinates of $M$ and let $\eta$ be the
associated metric potential. Define a matrix $K$ of functions on
$M$ by
\begin{equation*}
K:= \diag\Big(\frac{|\eta_1|}{\eta_1},
\frac{|\eta_2|}{\eta_2},\dots, \frac{|\eta_m|}{\eta_m}\Big),
\end{equation*}
Let $\kappa: \mathcal{T}_M^{(1,0)}\rightarrow
\mathcal{T}_M^{(1,0)}$ be the $\mathbb{C}$ anti-linear
endomorphism  defined by
\begin{equation*}
\kappa \partial_{u^\alpha} = K_{\alpha\alpha} \partial_{u^\alpha}.
\end{equation*}
Then $(M, g, \circ, e, \mathcal{E}, \kappa)$ is a
CDV$\oplus$-structure on $M$ with $\mathcal{Q}=0$, and
\begin{equation*}
h= \diag({|\eta_1|}, {|\eta_2|},\dots, {|\eta_m|}).
\end{equation*}
Moreover the connection forms
$\omega_\alpha^\beta:=\sum_\gamma\partial h_{\alpha\gamma}\cdot
h^{\gamma\beta}$ for the Chern connection of $h$ are holomorphic
and the matrix $\omega=(\omega_\alpha^\beta)$ is given by
\begin{equation*}
\omega = \diag\Big(\frac{\partial \eta_1}{2\eta_1}, \frac{\partial
\eta_2}{2\eta_2},\dots, \frac{\partial \eta_m}{2\eta_m}\Big).
\end{equation*}
\end{theorem}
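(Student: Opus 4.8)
The plan is to work entirely in a system of canonical coordinates $(u^1,\dots,u^m)$, where the Frobenius structure becomes completely explicit, and then to verify the hypotheses of Theorem~\ref{suff cond}. Recall that in canonical coordinates the product is $\partial_{u^\alpha}\circ\partial_{u^\beta}=\delta_{\alpha\beta}\partial_{u^\alpha}$, the unit is $e=\sum_\alpha\partial_{u^\alpha}$, the Euler field is $\mathcal E=\sum_\alpha u^\alpha\partial_{u^\alpha}$, and the metric is diagonal, $g(\partial_{u^\alpha},\partial_{u^\beta})=\delta_{\alpha\beta}\eta_\alpha$, where $\eta_\alpha=\eta_\alpha(u)$ are the components of the metric potential (so $d_ie=\sum\eta_\alpha\,du^\alpha$ is closed). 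First I would record these formulas and compute, from the definition $\kappa\partial_{u^\alpha}=K_{\alpha\alpha}\partial_{u^\alpha}$ with $K_{\alpha\alpha}=|\eta_\alpha|/\eta_\alpha$, that $\kappa^2=\Id$ (since $|K_{\alpha\alpha}|=1$ forces $K_{\alpha\alpha}\overline{K_{\alpha\alpha}}=1$, and anti-linearity gives $\kappa^2\partial_{u^\alpha}=K_{\alpha\alpha}\overline{K_{\alpha\alpha}}\,\partial_{u^\alpha}$), and that $h(\partial_{u^\alpha},\partial_{u^\beta})=g(\partial_{u^\alpha},\kappa\partial_{u^\beta})=\overline{K_{\beta\beta}}\,\delta_{\alpha\beta}\eta_\alpha=\delta_{\alpha\beta}|\eta_\alpha|$, which is manifestly Hermitian-symmetric and positive-definite. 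This gives at once $h=\diag(|\eta_1|,\dots,|\eta_m|)$ and the CV$\oplus$ positivity.

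Next I would compute the Chern connection of $h$. Since $h$ is diagonal with entries $|\eta_\alpha|$, the $(1,0)$-part of the Chern connection has connection matrix $\omega_\alpha^\beta=\sum_\gamma(\partial h_{\alpha\gamma})h^{\gamma\beta}=\delta_{\alpha\beta}\,\partial|\eta_\alpha|/|\eta_\alpha|$. The key identity is $\partial|\eta_\alpha|/|\eta_\alpha|=\tfrac12\,\partial\log|\eta_\alpha|^2=\tfrac12\,\partial\log(\eta_\alpha\overline{\eta_\alpha})=\tfrac12\,\partial\eta_\alpha/\eta_\alpha$ (the $\overline\eta_\alpha$ term is killed by the holomorphic derivative $\partial$, as $\overline{\eta_\alpha}$ is anti-holomorphic). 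This yields $\omega=\diag(\partial\eta_1/2\eta_1,\dots,\partial\eta_m/2\eta_m)$; holomorphy of the $\omega_\alpha^\beta$ follows because the $\eta_\alpha$ are holomorphic and nowhere zero (semi-simplicity). I would also note that $\overline\partial$-part of the Chern connection is just the ordinary $\overline\partial$ in these holomorphic frames, so $D''=\overline\partial$ and $D'=d+\omega$ in the frame $\{\partial_{u^\alpha}\}$.

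Then I would check the three hypotheses \eqref{Q} and \eqref{harmonic} of Theorem~\ref{suff cond}. For $\mathcal Q$: since $\mathcal E=\sum u^\alpha\partial_{u^\alpha}$, one computes $\mathcal L_{\mathcal E}(\partial_{u^\alpha})=-\partial_{u^\alpha}$ and $D'_{\mathcal E}\partial_{u^\alpha}=\sum_\beta(\mathcal E\lrcorner\,\omega)_\alpha^\beta\partial_{u^\beta}=(\sum_\gamma u^\gamma\,\partial\eta_\alpha/2\eta_\alpha\,\partial_\gamma\,\text{-coefficient})\,\partial_{u^\alpha}$; using the homogeneity of $\eta_\alpha$ (each $\eta_\alpha$ has a definite weight under $\mathcal E$, coming from $\mathcal L_{\mathcal E}g=(2-d)g$ and $\mathcal L_{\mathcal E}(du^\alpha)=du^\alpha$, so $\mathcal E(\eta_\alpha)=(2-d-2)\eta_\alpha\cdot(\text{some constant})$ — more precisely $\mathcal E\eta_\alpha=(-d)\eta_\alpha+\text{const}\cdot\eta_\alpha$ depending on the normalization) one finds $D'_{\mathcal E}\partial_{u^\alpha}$ is a \emph{real} multiple of $\partial_{u^\alpha}$ plus $\tfrac{2-d}{2}\partial_{u^\alpha}$, giving $\mathcal Q=0$; since the zero endomorphism is trivially $h$-self-adjoint, \eqref{Q} holds. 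For $D'(\Phi)=0$: in canonical coordinates $\Phi_{\partial_{u^\alpha}}\partial_{u^\beta}=-\delta_{\alpha\beta}\partial_{u^\alpha}$, and since the Chern connection is diagonal the covariant derivative of this tensor vanishes componentwise — this is essentially the Saito-structure compatibility $D'(C)=0$ but I would verify it directly from the explicit matrices. For the tt$^*$-equation, the curvature $D'D''+D''D'$ of the Chern connection of a diagonal metric is $\overline\partial\omega=\diag(\overline\partial\partial\eta_\alpha/2\eta_\alpha,\dots)=0$ since $\partial\eta_\alpha/\eta_\alpha$ is holomorphic; on the other hand $\Phi\wedge\Phi^\dag+\Phi^\dag\wedge\Phi$ must also be computed — $\Phi^\dag_{\overline{\partial_{u^\alpha}}}$ is $h$-adjoint to $\Phi_{\partial_{u^\alpha}}=-E_{\alpha\alpha}$, hence also $-E_{\alpha\alpha}$ (diagonal, real) — and one checks $\Phi\wedge\Phi^\dag$ is a sum of $du^\alpha\wedge d\overline{u^\beta}\otimes(\text{diagonal})$ terms that cancel against $\Phi^\dag\wedge\Phi$ by antisymmetry of the wedge, so both sides of \eqref{harmonic} vanish.

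The main obstacle I anticipate is the $\mathcal Q=0$ computation: it requires knowing the precise $\mathcal E$-homogeneity weight of the canonical metric potential $\eta_\alpha$, i.e.\ that $\mathcal L_{\mathcal E}(\eta_\alpha\,du^\alpha\otimes du^\alpha)=(2-d)\eta_\alpha\,du^\alpha\otimes du^\alpha$ together with $\mathcal L_{\mathcal E}(du^\alpha)=du^\alpha$ pins down $\mathcal E(\log\eta_\alpha)=-d$, and then matching $D'_{\mathcal E}-\mathcal L_{\mathcal E}=\tfrac12\mathcal E(\log\eta_\alpha)\Id+\Id=(1-\tfrac d2)\Id=\tfrac{2-d}{2}\Id$ exactly. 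One must be careful that $u^\alpha$ are only determined up to additive constants (which is why the general Euler field has the extra $\sum r_j\partial_{t^j}$ term), so strictly $\mathcal E=\sum(u^\alpha-c^\alpha)\partial_{u^\alpha}$; however the $c^\alpha$-contribution to $D'_{\mathcal E}$ involves $\sum_\gamma c^\gamma\partial_{u^\gamma}(\log\eta_\alpha)$ which need not vanish pointwise — resolving this cleanly (perhaps by noting that only the semisimple part of $\nabla\mathcal E$ matters for the endomorphism $\mathcal Q$, or by absorbing it) is the delicate point. Everything else is bookkeeping with diagonal matrices and the observation that $\partial$ annihilates anti-holomorphic functions.
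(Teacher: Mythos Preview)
Your proposal is correct and follows essentially the same route as the paper: compute $h$, $\omega$, $\mathcal Q$ explicitly in canonical coordinates and then invoke Theorem~\ref{suff cond}. The paper organizes the verifications through three small lemmas (Lemma~\ref{canoDphi} for $D'(\Phi)=0$, Lemma~\ref{harm} reducing the tt$^*$-equation to $[\tilde C^{(\beta)},C^{(\alpha)}]=0$ and $\overline\partial\omega=0$, and Lemma~\ref{dg} for $\mathcal E\eta_\alpha=-d\,\eta_\alpha$), but these amount to the same diagonal-matrix bookkeeping you outline.

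Your one stated worry, the additive constants in $u^\alpha$, is handled in the paper simply by normalizing the canonical coordinates so that $\mathcal E=\sum_\alpha u^\alpha\partial_{u^\alpha}$; this is always possible since canonical coordinates are determined only up to additive constants, and with this choice $\mathcal L_{\mathcal E}e_\alpha=-e_\alpha$ and $\mathcal E\eta_\alpha=-d\,\eta_\alpha$ give $\mathcal Q e_\alpha=(-\tfrac d2+1-\tfrac{2-d}{2})e_\alpha=0$ on the nose, with no residual $c^\alpha$-terms.
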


In the second part of this article we compare three natural
connections on a non-trivial CDV-structure. Given any
CDV-structure $(\mathcal{T}_M^{(1,0)}\rightarrow M, g, \circ, e,
\mathcal{E},\kappa)$ on a complex analytic manifold $M$, we have
three connections on the tangent bundle. The first one is the
Levi-Civita connection $\nabla$ for $g$, which is a holomorphic
connection, that we extend to a $(1,0)$-connection on
$\mathcal{T}_M^{(1,0)}$. The second one is the Chern connection
$D'$ with respect to $h$, where $h$ is defined above by $g$ and
$\kappa$, which is also a $(1,0)$-connection on
$\mathcal{T}_M^{(1,0)}$. The third one is the Levi-Civita
connection $\hat\nabla$ of the Riemannian pseudo-metric $\hat g$
(cf.\ \S\ref{subsec:Kahler}), that we consider as a
$(1,0)$-connection on $\mathcal{T}_M^{(1,0)}$ by using
\eqref{eq:Re} as the definition.

\begin{definition}\label{def:trivial}
A Frobenius manifold $(M, g, \circ, e, \mathcal{E})$ is said to be
trivial if, locally, the potential function is a polynomial of
degree three when expressed in some holomorphic $\nabla$-flat
local coordinates $t^1,\dots, t^m$.

A CDV-structure (resp.~a CDV$\oplus$-structure) is non-trivial if
the underlying Frobenius manifold is non-trivial.
\end{definition}

We have the following easy criterion for triviality.

\begin{lemma}
Let $(M, g, \circ, e, \mathcal{E})$ be a semi-simple Frobenius
manifold. Assume that any system of canonical local coordinates is
flat. Then the semi-simple Frobenius manifold is trivial and, for
any such system of coordinates, the potential $\eta$ of the metric
relative to this system, defined by
$g(\partial_{u^\alpha},\partial_{u^\alpha})=\partial\eta/\partial
u^\alpha$ ($\alpha=1,\dots,m$), is linear in the coordinates up to
an additive constant.\qed
\end{lemma}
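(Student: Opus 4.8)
The plan is to work entirely within a fixed system of canonical local coordinates $u^1,\dots,u^m$ and use the hypothesis that these coordinates are $\nabla$-flat. Recall that in canonical coordinates the product is $\partial_{u^\alpha}\circ\partial_{u^\beta}=\delta_{\alpha\beta}\partial_{u^\alpha}$, the unit field is $e=\sum_\alpha\partial_{u^\alpha}$, and the metric is diagonal, $g(\partial_{u^\alpha},\partial_{u^\beta})=\delta_{\alpha\beta}\eta_\alpha$ for functions $\eta_\alpha$ with $\partial\eta_\alpha/\partial u^\beta=\partial\eta_\beta/\partial u^\alpha$ (the integrability that produces a potential $\eta$ with $\partial\eta/\partial u^\alpha=\eta_\alpha$). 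Since by assumption the coordinates $u^\alpha$ are also flat, the Levi-Civita connection $\nabla$ annihilates every $\partial_{u^\alpha}$.

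First I would compute the Christoffel symbols of $g$ in these coordinates directly from the standard formula $\Gamma^\gamma_{\alpha\beta}=\tfrac12\sum_\delta g^{\gamma\delta}(\partial_\alpha g_{\beta\delta}+\partial_\beta g_{\alpha\delta}-\partial_\delta g_{\alpha\beta})$; flatness of the coordinates forces all these to vanish, and since $g$ is diagonal this gives $\partial_\beta\eta_\alpha=0$ whenever $\alpha\neq\beta$ (from the off-diagonal vanishing) together with constraints on the diagonal derivatives. Combining $\partial_\beta\eta_\alpha=0$ for $\beta\neq\alpha$ with the symmetry $\partial_\beta\eta_\alpha=\partial_\alpha\eta_\beta$ shows that $\partial_\alpha\eta_\beta=0$ for all $\alpha\neq\beta$, and then the remaining vanishing of $\Gamma^\alpha_{\alpha\alpha}=\tfrac12\eta_\alpha^{-1}\partial_\alpha\eta_\alpha$ forces $\partial_\alpha\eta_\alpha=0$ as well. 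Hence every $\eta_\alpha$ is a constant, so $\eta=\sum_\alpha c_\alpha u^\alpha+\text{const}$ is affine in the coordinates, which is exactly the asserted form of the metric potential.

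Next I would deduce triviality of the Frobenius structure. By Proposition~\ref{proposition3} (or directly), the potential $F$ in flat coordinates is recovered from the structure constants $c_{\alpha\beta\gamma}=g(\partial_\alpha\circ\partial_\beta,\partial_\gamma)=\delta_{\alpha\beta}\delta_{\beta\gamma}\eta_\alpha$; since the $\eta_\alpha$ are now constants, the third derivatives $\partial_{t^i}\partial_{t^j}\partial_{t^k}F$ — which are obtained from $c_{\alpha\beta\gamma}$ by the constant linear change of coordinates between $u^\alpha$ and the flat coordinates $t^i$ of Proposition~\ref{proposition4} — are all constant. A function whose third partial derivatives are constant in some linear coordinate system is a polynomial of degree at most three, so $F$ is cubic and the Frobenius manifold is trivial in the sense of Definition~\ref{def:trivial}.

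The main obstacle is purely bookkeeping: one must be careful that ``canonical coordinates are flat'' is read as a hypothesis on the particular chart under consideration and that the conclusion about $\eta$ is stated for that same chart, since canonical coordinates are only unique up to permutation and additive constants, and the normalization of $\eta$ (up to an additive constant) reflects exactly the freedom in choosing its primitive. There is also a small point to check — that the symmetry $\partial_\beta\eta_\alpha=\partial_\alpha\eta_\beta$ genuinely holds, which follows from the flatness of $\nabla$ applied to the one-form $\sum_\alpha\eta_\alpha\,du^\alpha$ being closed (equivalently, the potentiality axiom d) of the Frobenius structure). Beyond these caveats the argument is a short direct computation, so I expect no serious difficulty.
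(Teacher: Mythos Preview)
Your argument is correct and is presumably what the author had in mind: the paper supplies no proof at all (the lemma carries a \qed\ immediately after its statement), so there is nothing substantive to compare against. Flatness of the canonical frame forces the Christoffel symbols computed in Lemma~\ref{last} to vanish, hence every $\eta_\alpha$ is constant, $\eta$ is affine, the structure constants $c_{\alpha\beta\gamma}=\delta_{\alpha\beta}\delta_{\beta\gamma}\eta_\alpha$ are constant, and the potential is cubic.

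Two cosmetic remarks. First, you do not need to pass through Proposition~\ref{proposition4} or any linear change of coordinates: since the canonical coordinates $u^\alpha$ are themselves $\nabla$-flat by hypothesis, they already serve as the flat coordinates in Definition~\ref{def:trivial}, and the potential $F$ computed directly in the $u^\alpha$ has constant third derivatives. Invoking Proposition~\ref{proposition4} is also slightly risky, since that proposition assumes $g(e,e)=0$ and simple eigenvalues of $\nabla\mathcal{E}$, neither of which is hypothesised here. Second, the closedness of $\sum_\alpha\eta_\alpha\,du^\alpha$ is not really axiom (d); it is more directly the observation that this form equals $g(e,\cdot)$, and since $\nabla e=0$, $\nabla g=0$ and $\nabla$ is torsion-free, the form is $\nabla$-parallel and hence closed. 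None of this affects the validity of your proof.
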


We first show that $\nabla$ and $D'$ do not coincide on a
non-trivial semi-simple CDV-structure.

\begin{theorem}\label{flat k}
Let $(M, g, \circ, e, \mathcal{E})$ be a Frobenius manifold.
Suppose that there exists a $\mathbb{C}$-anti-linear involution
$\kappa$ such that $(M, g, \circ, e, \mathcal{E},\kappa)$ is a
CDV-structure.
\begin{enumerate}
\item\label{flat k1} The following properties are equivalent:
\begin{enumerate}
\item $\kappa$ is $\nabla$-flat, \item $D'=\nabla$.
\end{enumerate}
\item\label{flat k2} If these properties are satisfied and $(M, g,
\circ, e, \mathcal{E})$ is semi-simple, then any canonical local
coordinate system $(u^{1}, u^{2},\dots, u^{m})$ of $(M, g, \circ,
e, \mathcal{E})$ is $\nabla$-flat and $\mathcal Q=0$. In
particular, the semi-simple Frobenius manifold is trivial.
Moreover, if $(M, g, \circ, e, \mathcal{E},\kappa)$ is a
CDV$\oplus$-structure, in any such system of coordinates, the
matrices of $\kappa$ and $h$ are expressed as in Theorem
\ref{existence}.
\end{enumerate}
\end{theorem}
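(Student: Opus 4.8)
The plan is to prove the two parts of Theorem \ref{flat k} in turn, relying throughout on the structure equations of a CDV-structure recalled in Definition \ref{CDV-structure} and Proposition \ref{CDV-structure2}, together with the comparison \eqref{tt2}, specialized to $K=\mathcal T_M^{(1,0)}$, which in the notation of Theorem \ref{suff cond} reads $D'\overline\partial+\overline\partial D'=-(\Phi\wedge\Phi^\dag+\Phi^\dag\wedge\Phi)$. The key observation linking $\nabla$ and $D'$ is that the $(1,0)$-part $D'$ of the Chern connection $D$ and the holomorphic Levi-Civita connection $\nabla$ are both metric connections (for $h$ and for $g$ respectively) compatible with the holomorphic structure; since $h=g(\cdot,\kappa\cdot)$, the two connections agree precisely when $\kappa$ is parallel for $\nabla$. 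Concretely, from $D(h)=0$ and $D(\kappa)=0$ one gets $D(g)=0$ on $\mathcal T_M^{(1,0)}$; conversely $\nabla$ is the unique $(1,0)$-connection that is torsion-free, compatible with the holomorphic structure, and preserves $g$.

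For part \eqref{flat k1}, the implication (b)$\Rightarrow$(a) is immediate: if $D'=\nabla$ then $\overline\partial\kappa$ is the only remaining piece of $D\kappa=0$, and since $\kappa$ is real-analytic anti-linear one checks $\overline\partial\kappa=\overline{D'\kappa}=0$, hence $\nabla\kappa=0$. For (a)$\Rightarrow$(b): assuming $\nabla\kappa=0$, the connection $\nabla$ (viewed as a $(1,0)$-connection on $\mathcal T_M^{(1,0)}$) together with $\overline\partial$ gives a connection $\nabla+\overline\partial$ preserving both $g$ and $\kappa$, hence preserving $h$; since it is also compatible with the holomorphic structure it must be the Chern connection, so its $(1,0)$-part is $D'$, i.e.\ $D'=\nabla$. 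First I would write this out carefully with the adjoint bookkeeping; the only subtlety is checking that $\overline\partial\kappa=0$ follows from $\nabla\kappa=0$ by anti-linearity, which is where one uses that $\kappa$ intertwines $\mathcal T_M^{(1,0)}$ with itself $\mathbb C$-anti-linearly rather than with the conjugate bundle.

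For part \eqref{flat k2}, assume the equivalent conditions of \eqref{flat k1} and semi-simplicity, and work in canonical coordinates $(u^1,\dots,u^m)$, in which $\Phi_{\partial_{u^\alpha}}\partial_{u^\beta}=-\delta_{\alpha\beta}\partial_{u^\alpha}$, i.e.\ $C$ is diagonal. Since $\Phi$ (equivalently $C$) is holomorphic, $D'\Phi=0$ by \eqref{tt1}, and $D'=\nabla$, the metric $g$ is $\nabla$-flat so the tt* curvature equation \eqref{tt2} becomes $\overline\partial\nabla+\nabla\overline\partial=-(\Phi\wedge\Phi^\dag+\Phi^\dag\wedge\Phi)$. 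Using that $\kappa$ and hence $h$ is now $\nabla$-parallel, the right-hand side and left-hand side can be evaluated in the canonical frame; comparing diagonal and off-diagonal components of this matrix identity, together with the flatness of $\nabla$ itself (so that $\nabla^2=0$), forces the connection coefficients $\Gamma^\gamma_{\alpha\beta}$ in canonical coordinates to vanish, i.e.\ the canonical coordinates are $\nabla$-flat. Triviality of the Frobenius manifold then follows from the Lemma just above (or directly: $F$ becomes a cubic in the flat $=$ canonical coordinates). The statement $\mathcal Q=0$ comes from $\mathcal Q=D'_{\mathcal E}-\mathcal L_{\mathcal E}-\frac{2-d}{2}\Id$: once the canonical coordinates are flat, $\mathcal E=\sum_\alpha u^\alpha\partial_{u^\alpha}+\text{const}$, and a direct computation of $D'_{\mathcal E}$ versus $\mathcal L_{\mathcal E}$ on the flat frame $\partial_{u^\alpha}$ cancels the $\frac{2-d}{2}\Id$ term. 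Finally, in the CDV$\oplus$ case, I would identify $\kappa$ and $h$ with the formulas of Theorem \ref{existence}: positivity of $h=g(\cdot,\kappa\cdot)$ forces $\kappa\partial_{u^\alpha}=(|\eta_\alpha|/\eta_\alpha)\partial_{u^\alpha}$ (the unique anti-linear involution making $g(\partial_{u^\alpha},\kappa\partial_{u^\alpha})=|\eta_\alpha|>0$), whence $h=\diag(|\eta_1|,\dots,|\eta_m|)$.

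The main obstacle I anticipate is part \eqref{flat k2}: extracting the vanishing of the canonical-coordinate connection coefficients from the tt* curvature equation requires carefully separating the diagonal from the off-diagonal parts of the matrix equation \eqref{tt2} in the canonical frame, keeping track of which entries of $\Phi\wedge\Phi^\dag$ are killed by the diagonality of $C$, and then using the rigidity coming from $\nabla$ being simultaneously flat and a metric connection for the $\nabla$-parallel $h$. The computation in \eqref{flat k1} and the identification of $\kappa,h$ at the end are essentially bookkeeping once the right uniqueness statements for the Chern and Levi-Civita connections are invoked.
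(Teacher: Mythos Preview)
Your treatment of part \eqref{flat k1} is essentially fine, though the line ``$\overline\partial\kappa=\overline{D'\kappa}=0$'' is garbled: for an anti-linear $\kappa$, the condition $D(\kappa)=0$ already unpacks as $D'\circ\kappa=\kappa\circ\overline\partial$ (see the proof of Lemma~\ref{lemma 2th C}), so if $D'=\nabla$ you get $\nabla\circ\kappa=\kappa\circ\overline\partial$ immediately, which is exactly what ``$\kappa$ is $\nabla$-flat'' means. Your (a)$\Rightarrow$(b) via uniqueness of the Chern connection is a clean alternative to the paper's direct computation $\nabla\kappa=\kappa\overline\partial=D'\kappa\Rightarrow\nabla=D'$.

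Part \eqref{flat k2} has a genuine gap. Once $D'=\nabla$ is flat, the tt* equation \eqref{tt2} reduces to $\Phi\wedge\Phi^\dag+\Phi^\dag\wedge\Phi=0$, i.e.\ $[\tilde C^{(\beta)},C^{(\alpha)}]=0$ in canonical coordinates. This is a constraint on $\kappa$ (equivalently on the matrix $K$), \emph{not} on the Christoffel symbols $\Gamma$; the equation contains no $\Gamma$'s at all, so ``comparing diagonal and off-diagonal components \dots\ forces $\Gamma^\gamma_{\alpha\beta}=0$'' cannot work as stated. The paper's argument has two separate steps you are conflating: first, the commutator constraint forces $K$ to have a single nonzero entry in each row and column, whence the Chern connection matrix $\omega=-\overline K\cdot\partial K$ is \emph{diagonal} in the canonical frame; second, one feeds $\omega$ diagonal and $D'=\nabla$ into Manin's explicit formulas for the Levi-Civita connection in canonical coordinates (Lemma~\ref{last}), and the off-diagonal terms of $\nabla_{e_\alpha}e_\beta$ there force $e_\alpha\eta_\beta=0$ for $\alpha\neq\beta$, after which $\nabla e=0$ gives $e_\alpha\eta_\alpha=0$. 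Without invoking the metric potential $\eta$ and Lemma~\ref{last}, you have no bridge from the $\kappa$-constraint to flatness of the canonical frame. The same omission affects your final identification of $\kappa$ in the CDV$\oplus$ case: positivity alone does not make $\kappa$ diagonal in the canonical frame---you first need the permutation-like structure of $K$ coming from the tt* constraint, and only then does $h_{\alpha\alpha}>0$ pin down $\nu_\alpha=\alpha$.
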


Let us check that \ref{flat k}\eqref{flat k1} holds. Because
$\kappa$ defines a CDV-structure, we have $D(\kappa)=0$, that is,
$D'(\kappa)=0$ and $\overline\partial(\kappa)=0$.

If $D'= \nabla$, we have
\begin{equation*}
(\nabla + \overline{\partial})(\kappa) = (D'+\overline{\partial})
(\kappa) = 0.
\end{equation*}
Conversely, if $\kappa$ is $\nabla$-flat, then $(\nabla +
\overline{\partial})(\kappa) = 0$ and
\begin{equation*}
\nabla \kappa = \kappa \overline{\partial} = D' \kappa
\end{equation*}
However, $\kappa$ is an involution of $\mathcal{T}_M^{(1, 0)}$,
hence $$D'=\nabla.$$

\begin{corollary}\label{D nabla}
For any non-trivial semi-simple CDV-structure, we have $D' \neq
\nabla$.\qed
\end{corollary}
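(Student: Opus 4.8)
The plan is to deduce Corollary~\ref{D nabla} directly from Theorem~\ref{flat k} together with the triviality criterion already recorded. Suppose, for contradiction, that $(M,g,\circ,e,\mathcal{E},\kappa)$ is a non-trivial semi-simple CDV-structure for which $D'=\nabla$. By the equivalence in Theorem~\ref{flat k}\eqref{flat k1}, this is the same as saying that $\kappa$ is $\nabla$-flat. Then Theorem~\ref{flat k}\eqref{flat k2} applies: since the underlying Frobenius manifold is semi-simple and these equivalent properties hold, every canonical local coordinate system is $\nabla$-flat, $\mathcal{Q}=0$, and---crucially---the semi-simple Frobenius manifold is trivial. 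This contradicts the standing assumption of non-triviality, so no such $\kappa$ can exist, i.e.\ $D'\neq\nabla$.

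In writing this out I would phrase it as a one-line contrapositive argument: non-triviality of the semi-simple CDV-structure forbids the conclusion ``the semi-simple Frobenius manifold is trivial'' in Theorem~\ref{flat k}\eqref{flat k2}, hence forbids its hypothesis, namely that the equivalent properties of part~\eqref{flat k1} hold; therefore $\kappa$ is not $\nabla$-flat and, equivalently, $D'\neq\nabla$. One should perhaps remark that the hypotheses of Theorem~\ref{flat k} are met here: a non-trivial semi-simple CDV-structure is in particular a CDV-structure on a Frobenius manifold with a $\mathbb{C}$-anti-linear involution $\kappa$, which is exactly what that theorem assumes, and semi-simplicity is part of the hypothesis of part~\eqref{flat k2}.

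There is essentially no obstacle: the corollary is a formal consequence of the theorem it follows, and all the real content---the passage from $\nabla$-flatness of $\kappa$ (equivalently $D'=\nabla$) to $\nabla$-flatness of canonical coordinates, and thence to triviality via the criterion in the preceding Lemma---has already been established. If anything, the only point deserving a word is that ``non-trivial'' in Definition~\ref{def:trivial} is defined precisely as the negation of the triviality statement appearing in Theorem~\ref{flat k}\eqref{flat k2}, so the contradiction is immediate and needs no further unwinding. Accordingly the proof is a two- or three-sentence argument, which is consistent with the \verb|\qed| already placed at the end of the corollary's statement in the excerpt.
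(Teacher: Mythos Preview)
Your argument is correct and is exactly the intended one: the paper places a \qed after the statement because the corollary is the immediate contrapositive of Theorem~\ref{flat k}\eqref{flat k2} combined with the equivalence in part~\eqref{flat k1}, precisely as you spell out.
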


\begin{corollary}\label{have torsion}
For any non-trivial semi-simple CDV-structure, the Chern
connection $D'$ is not torsion-free.
\end{corollary}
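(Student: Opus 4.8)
The plan is to deduce this from Corollary~\ref{D nabla} by establishing a general fact valid on every CDV-structure: \emph{if $D'$ is torsion-free then $D'=\nabla$}. Granting this, a non-trivial semi-simple CDV-structure with torsion-free $D'$ would satisfy $D'=\nabla$, contradicting Corollary~\ref{D nabla}; hence on such a structure $D'$ cannot be torsion-free. Since the torsion of $D'$ is $C^\infty_M$-bilinear in its two arguments, the fact is a local statement, so I would fix a chart with $\nabla$-flat holomorphic coordinates $t^1,\dots,t^m$, in which $g_{ij}=g(\partial_{t^i},\partial_{t^j})$ is a constant invertible matrix; in particular the $\mathbb C$-bilinear extension of $g$ is non-degenerate on $\mathcal T_M^{(1,0)}$.

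The first step is to show that $D'$, although a priori only metric for $h$, is compatible with the $\mathbb C$-bilinearly extended $g$ along $(1,0)$-directions, i.e. $X\!\cdot\! g(Y,Z)=g(D'_XY,Z)+g(Y,D'_XZ)$ for local sections $X,Y,Z$ of $\mathcal T_M^{(1,0)}$. Here I would use three ingredients. First, $h=g(\cdot,\kappa\cdot)$ together with $\kappa^2=\Id$ gives $g=h(\cdot,\kappa\cdot)$. Second, the defining identity of the Chern connection $D=D'+\overline\partial$ of $h$, which for a $(1,0)$-field $X$ reads $X\!\cdot\! h(a,b)=h(D'_Xa,b)+h(a,\overline\partial_{\overline X}b)$. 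Third, the $(0,1)$-component of $D(\kappa)=0$: because $\kappa$ is $\mathbb C$-anti-linear it interchanges the $(1,0)$- and $(0,1)$-types, so $D(\kappa)=0$ is equivalent to the pair $D'_X(\kappa s)=\kappa(\overline\partial_{\overline X}s)$ and $\overline\partial_{\overline X}(\kappa s)=\kappa(D'_Xs)$. Substituting $a=Y$, $b=\kappa Z$ into the Chern-connection identity and using $\overline\partial_{\overline X}(\kappa Z)=\kappa(D'_XZ)$ and $\kappa^2=\Id$ converts the $h$-identity into the claimed $g$-identity.

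The second step: since $\mathcal T_M^{(1,0)}$ is an involutive distribution, $[Y,Z]$ is again of type $(1,0)$, so the Koszul formula expresses $2g(D'_XY,Z)$ purely in terms of $g$, the derivatives of $g$ along $(1,0)$-fields, and brackets of $(1,0)$-fields — the torsion-freeness hypothesis $D'_XY-D'_YX=[X,Y]$ being exactly what is used. By non-degeneracy of $g$ on $\mathcal T_M^{(1,0)}$ this determines $D'_XY$; but $\nabla$, being torsion-free with $\nabla g=0$, obeys the very same formula, so $D'=\nabla$. Combined with Corollary~\ref{D nabla}, this completes the argument. The only delicate point is the type bookkeeping: one must split $D(\kappa)=0$ correctly into its $(1,0)$- and $(0,1)$-parts given the anti-linearity of $\kappa$, and keep track of the conjugations in the Chern-connection compatibility when the differentiation direction is complex rather than real; once this is done, the rest is just the fundamental lemma of (pseudo-)Riemannian geometry applied along $\mathcal T_M^{(1,0)}$.
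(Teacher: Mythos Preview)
Your proposal is correct and follows essentially the same route as the paper: the paper observes that $D'(g)=0$ (deduced from $D(\kappa)=0$ and $D(h)=0$, exactly your first step, which is the content of Claim~\ref{claim0}), and then invokes uniqueness of the Levi-Civita connection of $g$ to conclude $D'=\nabla$ whenever $D'$ is torsion-free, contradicting Corollary~\ref{D nabla}. Your argument spells out the type bookkeeping and the Koszul formula more explicitly, but the strategy and ingredients are the same.
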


\begin{proof}
Assume that $D^{'}$ is torsion-free. Because of the relation $D^{'}(g)=\nobreak0$,
which is deduced from relations \eqref{flatk} and \eqref{flath} in
the definition of a CDV-structure, we conclude that $D^{'}$ is the
Levi-Civita connection of~$g$. So we have $D^{'}=\nabla$, in contradiction with corollary \ref{D nabla}.
\end{proof}

We will also compare $D'$ and $\nabla$ with the Levi-Civita
connection $\hat\nabla$ of the Riemannian pseudo-metric $\hat g$
(cf.\ \S\ref{subsec:Kahler}).

\begin{theorem}\label{non Kahler}
Let $(M, g, \circ, e, \mathcal{E}, \kappa)$ be a non-trivial,
semi-simple CDV-structure. Then
\begin{enumerate}
\item\label{non symp} $\hat{\omega}$ is not a symplectic form on
$M_{\mathbb{R}}$. Moreover, if $(M, g, \circ, e, \mathcal{E},
\kappa)$ is a CDV$\oplus$-structure, then the underlying Hermitian
manifold $(M, h)$ is not a K\"ahler manifold.
\item\label{comparison} the three connections $\nabla,
D',\hat{\nabla}$ are pairwise distinct (after the identification
of $\mathcal{T}_M^{(1,0)}$ with $T_{M_{\mathbb R}}$ via
$\mathcal{R}e$).
\end{enumerate}
\end{theorem}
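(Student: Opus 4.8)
The plan is to derive both conclusions of Theorem~\ref{non Kahler} from the comparison results already established, especially Corollary~\ref{D nabla}, together with the K\"ahler criterion recalled in \S\ref{subsec:Kahler} (equation~\eqref{eq:Re}) and an analysis of the torsion of $\hat\nabla$ versus $\nabla$. I would work in canonical local coordinates $(u^1,\dots,u^m)$, since the structure is semi-simple, and use the explicit form of $\kappa$ and $h$ provided by Theorem~\ref{flat k}\eqref{flat k2} in the $\oplus$ case, and more generally the diagonal presentation of $C$, $g$ and $\kappa$ in canonical coordinates.

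First I would prove part~\eqref{non symp}. The key observation is that if $\hat\omega$ were a symplectic form, i.e.\ $d\hat\omega=0$, then — in the positive-definite case — $(M,h)$ would be K\"ahler, hence by the criterion of Theorem~3.13 of \cite{CV} recalled in \eqref{eq:Re} the Chern connection $D'$ would coincide with $\hat\nabla$ under the identification $\mathcal{R}e$. But $\hat\nabla$ is torsion-free, so $D'$ would be torsion-free, contradicting Corollary~\ref{have torsion}. For the non-positive (merely non-degenerate) case one cannot invoke K\"ahler directly, but the same mechanism applies: $d\hat\omega=0$ is exactly the condition for $D'$ (the metric connection of the Hermitian pseudo-metric $h$, which is the Chern connection) to be torsion-free when restricted appropriately, because the $(2,0)$-part of the torsion of the Chern connection is controlled by $\partial\hat\omega$; vanishing torsion then gives $D'=\nabla$ by the $D'(g)=0$ argument used in Corollary~\ref{have torsion}, again contradicting Corollary~\ref{D nabla}. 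So I would phrase the argument so that ``$\hat\omega$ symplectic $\Rightarrow D'$ torsion-free $\Rightarrow D'=\nabla$'' holds without the positivity hypothesis, and then specialize to the K\"ahler statement when $h>0$.

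Next, for part~\eqref{comparison}, I already know $\nabla\neq D'$ from Corollary~\ref{D nabla}, and $D'\neq\hat\nabla$ from the torsion argument just given (since $\hat\nabla$ is torsion-free while $D'$ is not, by Corollary~\ref{have torsion}). It remains to show $\nabla\neq\hat\nabla$. Here I would argue by contradiction: if $\nabla=\hat\nabla$ under the identification, then since both are torsion-free and $\hat\nabla(\hat g)=0$, one reads off that $\nabla$ is a metric connection for $\hat g$; combined with $\nabla g=0$ this forces a rigid compatibility between $g$ and the real structure $\kappa$ (essentially that $\kappa$ is $\nabla$-parallel, because $\hat g=\operatorname{Re} h=\operatorname{Re} g(\cdot,\kappa\cdot)$ and $g$ are both $\nabla$-flat, so their ``ratio'', which encodes $\kappa$, is $\nabla$-flat too). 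But $\kappa$ being $\nabla$-flat is precisely condition~(a) of Theorem~\ref{flat k}\eqref{flat k1}, which for a semi-simple CDV-structure forces triviality by Theorem~\ref{flat k}\eqref{flat k2} — contradicting the non-triviality hypothesis.

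The main obstacle I anticipate is the pseudo-metric (non-positive) case of part~\eqref{non symp}: the clean statement ``$d\hat\omega=0$ iff Chern connection is torsion-free'' is standard for positive-definite Hermitian metrics, and I would need to check that the relevant computation of the torsion of the Chern connection in terms of $\partial\hat\omega$ goes through verbatim for a Hermitian pseudo-metric, or else reformulate part~\eqref{non symp} so that only $\partial\hat\omega$ (not $d\hat\omega$) enters and the positivity is genuinely unnecessary. A secondary subtlety is making the ``$\kappa$ is $\nabla$-flat'' deduction in part~\eqref{comparison} fully rigorous: I must be careful that $\hat\nabla=\nabla$ really yields $\nabla\kappa=0$ rather than just some weaker symmetry of $\kappa$ — the point being that $\hat g(X,Y)=\tfrac12(h(X,\kappa Y)+\overline{h(Y,\kappa X)})$ expressed in $\nabla$-flat coordinates has constant coefficients iff the matrix of $\kappa$ (relative to the constant matrix of $g$) is locally constant, i.e.\ $\nabla$-flat. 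Once these two points are pinned down, the rest is assembling the contradictions from the already-proven corollaries.
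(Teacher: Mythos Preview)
Your proposal is correct and closely parallels the paper's argument. For part~\eqref{non symp}, the paper proves exactly the lemma you anticipate needing (Lemma~\ref{lemma comp}): for any Hermitian \emph{pseudo}-metric, $d\hat\omega=0$ is equivalent to the Chern connection $D'$ being torsion-free; the computation is purely local and uses only non-degeneracy, so your worry about positivity is unfounded. Combined with Corollary~\ref{have torsion} this gives $d\hat\omega\neq0$, and the K\"ahler statement is the positive-definite specialization. For part~\eqref{comparison}, the inequalities $D'\neq\nabla$ and $D'\neq\hat\nabla$ are handled exactly as you describe.

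The one genuine divergence is the argument for $\nabla\neq\hat\nabla$. The paper takes a slightly shorter route than yours: assuming $\widetilde{\mathcal{R}e}\,\nabla=\hat\nabla\,\mathcal{R}e$, it observes that the real coordinates $x^j,y^j$ (from $\nabla$-flat $t^j=x^j+iy^j$) are then $\hat\nabla$-flat, so $\hat g$ and hence $\hat\omega=\hat g(\cdot,J\cdot)$ have constant coefficients, whence $h_{kl}$ is constant and $d\hat\omega=0$, contradicting part~\eqref{non symp} directly. Your route --- from constant $h_{kl}$ and constant $g_{kl}$ deduce constant $K$, hence $\kappa$ is $\nabla$-flat, hence triviality via Theorem~\ref{flat k} --- also works and is only marginally longer; the ``secondary subtlety'' you flag is resolved precisely by passing through constant $h_{kl}$ as the paper does (using $\hat\omega=\hat g(\cdot,J\cdot)$ to get from constant $\hat g$ to constant $h$). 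The paper's route has the mild advantage of recycling part~\eqref{non symp} rather than re-invoking Theorem~\ref{flat k}, but both land on the same underlying contradiction.
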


In \cite{TA}, Atsushi Takahashi shows that the matrix $h$ of a
CDV$\oplus$-structure of dimension two with $d \neq 0$ is diagonal
when expressed in holomorphic $\nabla$-flat local coordinates. In
dimension bigger than two, we show that the opposite conclusion
holds.

\begin{corollary}\label{connections2}
Let $(M, g, \circ, e, \mathcal{E}, \kappa)$ be a non-trivial,
semi-simple CDV-structure. If $\dim_{\mathbb{C}}M \geq 3$, then
for any $\nabla$-flat holomorphic local coordinates $t^1, t^2,
\dots, t^m$, the matrix $(h_{ij})$ cannot be diagonal.
\end{corollary}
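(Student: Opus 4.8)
The plan is to derive Corollary \ref{connections2} from Theorem \ref{non Kahler}\eqref{comparison} by contradiction, exactly as the preceding remark about Takahashi's result suggests. So suppose $\dim_{\mathbb C} M\ge 3$ and assume, for contradiction, that there exist $\nabla$-flat holomorphic local coordinates $t^1,\dots,t^m$ in which $(h_{ij})$ is diagonal. The key point I would exploit is that in $\nabla$-flat coordinates the Levi-Civita connection $\nabla$ of $g$ has vanishing Christoffel symbols, and the Chern connection $D'$ of a diagonal Hermitian metric is also ``diagonal'' in the sense that its connection form is $\omega_i^{\;j}=\delta_i^j\,\partial\log h_{ii}$ (a $(1,0)$-form). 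I would compute the torsion of $D'$ in these coordinates: since the coordinate vector fields $\partial_{t^i}$ are $\nabla$-flat, $[\partial_{t^i},\partial_{t^j}]=0$, so the torsion of $D'$ applied to $\partial_{t^i},\partial_{t^j}$ equals $D'_{\partial_{t^i}}\partial_{t^j}-D'_{\partial_{t^j}}\partial_{t^i}=\partial_{t^i}(\log h_{jj})\,\partial_{t^j}-\partial_{t^j}(\log h_{ii})\,\partial_{t^i}$. By Corollary \ref{have torsion}, $D'$ is not torsion-free, so there must exist indices $i\ne j$ with, say, $\partial_{t^i}(\log h_{jj})\ne 0$.

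Next I would bring in the third connection $\hat\nabla$ and the characterization of the K\"ahler condition from \S\ref{subsec:Kahler}. Under the identification $\widetilde{\mathcal Re}$, the statement $D'=\hat\nabla$ is equivalent to $d\hat\omega=0$, i.e.\ to $(M,h)$ being (pseudo-)K\"ahler; and for a diagonal Hermitian metric $h=\operatorname{diag}(h_{11},\dots,h_{mm})$ in holomorphic coordinates, $\hat\omega=\tfrac{i}{2}\sum_i h_{ii}\,dz^i\wedge d\bar z^i$, whose exterior derivative involves the mixed derivatives $\partial_{t^k}h_{ii}$ and $\overline{\partial_{t^k}h_{ii}}$ for $k\ne i$. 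Here is where $\dim M\ge 3$ enters decisively: I would show that the tt*-type equation \eqref{harmonic} (equivalently the flatness of $\nabla^{CV}$ from Proposition \ref{CDV-structure2}) forces, when $h$ is diagonal in $\nabla$-flat coordinates, a strong constraint — essentially that the curvature of $D'$ is $-(\Phi\wedge\Phi^\dag+\Phi^\dag\wedge\Phi)$, whose $(1,1)$-entries are controlled by the structure constants $C_{ij}^k$ of the Frobenius product. Combining the diagonal curvature form $\overline\partial\omega_i^{\;i}=\overline\partial\partial\log h_{ii}$ with the off-diagonal vanishing of the left-hand side of the curvature equation will show that $C_{ij}^k=0$ whenever $i,j,k$ are pairwise distinct; in dimension $\ge 3$ this, together with the normalization and associativity constraints on a semi-simple Frobenius manifold, will force the potential $F$ to be a cubic polynomial in the $t^i$, i.e.\ the Frobenius manifold is trivial — contradicting the non-triviality hypothesis.

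An alternative, perhaps cleaner, route avoids re-deriving the triviality and instead plays off Theorem \ref{non Kahler}\eqref{comparison} directly: if $(h_{ij})$ is diagonal in $\nabla$-flat coordinates, I would argue that the discrepancy tensor $D'-\nabla$ and the discrepancy tensor $\hat\nabla-\nabla$ both become ``diagonal'' objects expressible through $\partial\log h_{ii}$ and its conjugate, and that in the diagonal case these two discrepancies are forced to be compatible in a way that makes two of the three connections coincide — but Theorem \ref{non Kahler}\eqref{comparison} says all three are pairwise distinct on a non-trivial semi-simple CDV-structure. More precisely: Corollary \ref{D nabla} already gives $D'\ne\nabla$, so $\partial\log h_{ii}$ is not identically zero for all $i$; the content to extract is that diagonality plus the tt*-equations would give $d\hat\omega=0$, hence $\hat\nabla=D'$ by the criterion \eqref{eq:Re}, contradicting part \eqref{comparison}. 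The main obstacle I anticipate is the computation showing that \emph{diagonality of $h$ in $\nabla$-flat coordinates on a $\ge 3$-dimensional semi-simple Frobenius manifold implies $d\hat\omega=0$}: one must verify that the ``extra'' terms in $d\hat\omega$ — the ones with distinct holomorphic indices — are killed precisely by the off-diagonal vanishing of $\Phi\wedge\Phi^\dag+\Phi^\dag\wedge\Phi$ together with semi-simplicity, and it is here that the hypothesis $m\ge 3$ (which supplies a third index distinct from any given pair) is genuinely used, whereas Takahashi's two-dimensional computation shows the conclusion can fail when $m=2$.
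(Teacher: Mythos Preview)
Your proposal has a genuine gap: you are reaching for the wrong structural ingredient. The paper's argument does not use the curvature (tt*) equation \eqref{harmonic} at all; it uses the two first-order conditions $D'(g)=0$ (a consequence of $D(\kappa)=0$ and $D(h)=0$, cf.\ Claim~\ref{claim0}) and $D'(\Phi)=0$. From $h$ diagonal in $\nabla$-flat coordinates one gets $\omega_i^{\;j}=0$ for $i\ne j$ immediately. Then $D'(g)=0$ reduces to $(\omega_i^{\;i}+\omega_j^{\;j})g_{ij}=0$, so for each $j$ there is $k_j$ with $g_{jk_j}\ne 0$ and $\omega_j^{\;j}=-\omega_{k_j}^{\;k_j}$. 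Independently, $D'(\Phi)=0$ (via Lemma~\ref{lemma D}, using $e=\partial_{t^1}$, i.e.\ $C_{j1}^{\;q}=\delta_j^q$) yields $\omega_j^{\;j}(\partial_{t^k})=\omega_1^{\;1}(\partial_{t^k})$ for $k\ne j$. The hypothesis $m\ge 3$ is used only to pick, for each $k$, some $j\ne k$ with $k_j\ne k$ (a rank argument on $g$), whence $\omega_1^{\;1}(\partial_{t^k})=\omega_j^{\;j}(\partial_{t^k})=-\omega_{k_j}^{\;k_j}(\partial_{t^k})=-\omega_1^{\;1}(\partial_{t^k})$, forcing $\omega_1^{\;1}=0$ and then all $\omega_j^{\;j}=0$. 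Thus $D'=\nabla$, contradicting Corollary~\ref{D nabla}.

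Your two suggested routes both stall at the step you yourself flag as the ``main obstacle''. In the first route, the off-diagonal vanishing of $[\tilde C^{(j)},C^{(i)}]$ involves $\tilde C=\bar K\,\bar C\,K$, and with $K$ unknown (only $h=g\cdot K^t$ diagonal) you do not get $C_{ij}^{\;k}=0$ for pairwise distinct indices in any evident way; and even if you did, that condition alone does not force the potential to be cubic (it says nothing about $C_{ii}^{\;k}$ or $C_{ij}^{\;i}$). In the second route, showing $d\hat\omega=0$ amounts to proving $\partial_{t^k}h_{jj}=0$ for $k\ne j$, which is exactly the content of the paper's argument and requires $D'(g)=0$ together with $D'(\Phi)=0$; the curvature equation by itself does not deliver it. So the missing idea is to exploit $D'(g)=0$, not the tt* equation.
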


\section{Proof of the theorems}\label{section2}

\subsection{Necessary and sufficient condition: proof of Theorem \ref{suff cond}}\label{subsection2a}

In this subsection, we will use a system of holomorphic
$\nabla$-flat local coordinates $t^1, t^2,\dots,t^m$ of the
Frobenius manifold. We will then use the following notations:
\begin{align}
\kappa (\partial_{t^{i}})&= \sum_{k} K_{ik}\partial_{t^{k}},\label{eq:Mkappa}\\
\Phi_{\partial_{t^{i}}} (\partial_{t^{j}})&= - \sum_{k}
{C^{(i)}}_{j}^{k}
\partial_{t^{k}},\label{eq:MPhi}\\
\Phi^{\dag}_{ \overline \partial_{t^{i}}} (\partial_{t^{j}})&= -
\sum_{k} \tilde{C^{(i)}}_{j}^{k}\label{eq:MPhidag}
\partial_{t^{k}}.
\end{align}
If we define ${C_{ij}}^{k}$ by $$-\Phi _{\partial_{t^i}}
(\partial_{t^j}) = \sum_k {C_{ij}}^{k}
\partial_{t^k},$$ then we have
\begin{equation*}
{C^{(i)}}_{j}^{k} = {C_{ij}}^{k}.
\end{equation*}
Because of $h(X, Y)= g(X, \kappa Y)$ and $\Phi^{*}=\Phi$, we have,
for all $X,Y$,
\begin{eqnarray*}
h(X, \Phi^{\dag}Y)
& = &  h(\Phi X, Y) \\
& = & g(\Phi X, \kappa Y) \\
& = & h(X, \kappa \Phi \kappa Y),
\end{eqnarray*}
that is, $$\Phi^{\dag}= \kappa \Phi \kappa.$$ This is expressed by
$\tilde{C^{(i)}}= \overline{K} \cdot\nobreak \overline{C^{(i)}}
\cdot\nobreak K$, i.e.,
\begin{equation*}
\tilde{C^{(i)}}_{j}^{k}=\sum_{p,q}\overline{K}_{jp} \cdot
\overline{{C_{ip}}^{q}} \cdot K_{qk},
\end{equation*}

Let $D_h$ be the Chern connection of $h$ and let
$\omega_i^j:=\sum_k\partial h_{ik}\cdot h^{kj}$ be the connection
forms for $D'$ in the local holomorphic $\nabla$-flat coordinates
$t^i$.

\begin{proof}[Proof of Theorem \ref{suff cond}]
We just need to prove that
\[
(\mathcal{T}_M^{(1,0)} \rightarrow M, D_h, \Phi, \Phi^{\dag},
\kappa, h, \mathcal{U}:= \mathcal{E} \circ, \mathcal{Q})
\]
is a CV-structure and
\begin{equation*}
D'_{e}e=0.
\end{equation*}
Firstly, we will prove that $$D(\kappa)=0,$$ which is given by the
following two lemmas.

\begin{lemma}\label{lemma A}
If $h(a, b)=g(a, \kappa b)$ and $\kappa^2 = \Id$ hold, then we
have
\begin{equation*}
h^{-1}=\overline{g}^{-1} \cdot \overline{h} \cdot g^{-1}.
\end{equation*}
\end{lemma}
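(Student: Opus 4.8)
The plan is to pass to the local holomorphic $\nabla$-flat coordinates $t^1,\dots,t^m$ fixed above and reduce the identity to a one-line matrix computation. First I would record the coordinate form of $h$: from $h(\partial_{t^i},\partial_{t^j})=g(\partial_{t^i},\kappa\partial_{t^j})$ together with $\kappa(\partial_{t^j})=\sum_k K_{jk}\partial_{t^k}$ and the $\mathbb C$-bilinearity of $g$ one obtains $h_{ij}=\sum_k g_{ik}K_{jk}$, i.e.\ the matrix identity $h = g\cdot{}^{\mathrm t}K$, where ${}^{\mathrm t}K$ denotes the transpose of the matrix $K=(K_{ik})$. In particular, since $g$ is non-degenerate, $h$ is invertible as soon as $K$ is, so no extra non-degeneracy assumption on $h$ is needed (and the Hermitian symmetry of $h$, which is not part of the hypotheses of this lemma, is indeed not used).

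Next I would translate $\kappa^2=\Id$ into a relation on $K$. Because $\kappa$ is $\mathbb C$-anti-linear, $\kappa^2(\partial_{t^i})=\sum_{k,l}\overline{K_{ik}}\,K_{kl}\,\partial_{t^l}$, so $\kappa^2=\Id$ is equivalent to $\sum_k\overline{K_{ik}}K_{kl}=\delta_{il}$, that is $\overline K\cdot K=\Id$; hence $K^{-1}=\overline K$ and, in particular, $K$ is invertible. Now the computation closes itself: from $h=g\cdot{}^{\mathrm t}K$ we get $h^{-1}=({}^{\mathrm t}K)^{-1}g^{-1}={}^{\mathrm t}(K^{-1})\,g^{-1}=\overline{{}^{\mathrm t}K}\cdot g^{-1}$, while on the other side $\overline g^{-1}\cdot\overline h\cdot g^{-1}=\overline g^{-1}\cdot\bigl(\overline g\cdot\overline{{}^{\mathrm t}K}\bigr)\cdot g^{-1}=\overline{{}^{\mathrm t}K}\cdot g^{-1}$. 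Comparing the two expressions yields $h^{-1}=\overline g^{-1}\cdot\overline h\cdot g^{-1}$, and since all the matrices involved are the coordinate expressions of the corresponding intrinsic objects, this proves the lemma.

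There is no genuine obstacle here; the only point requiring care is the bookkeeping of the complex conjugations produced by the anti-linearity of $\kappa$ — concretely, that $\kappa^2=\Id$ gives $\overline K\,K=\Id$ rather than $K^2=\Id$, and that conjugation commutes with transposition, so that $({}^{\mathrm t}K)^{-1}={}^{\mathrm t}(\overline K)=\overline{{}^{\mathrm t}K}$. Everything else is formal manipulation of invertible matrices.
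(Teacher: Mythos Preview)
Your proof is correct and follows essentially the same coordinate-matrix approach as the paper's: both derive $h=g\cdot{}^{\mathrm t}K$ (equivalently $h^{\mathrm t}=Kg$) and $\overline K\,K=\Id$ from the two hypotheses, then finish by a short matrix manipulation. The only minor difference is that the paper invokes the Hermitian symmetry $\overline h=h^{\mathrm t}$ (assumed in the ambient theorem but not in the lemma itself) to rewrite $K=\overline h\,g^{-1}$, whereas you compute both sides directly and thus avoid that extra hypothesis.
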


\begin{proof}[Proof of Lemma \ref{lemma A}]
If $h(a, b)=g(a, \kappa b)$, then
\begin{equation*}
h_{ij}= \sum_{k}K_{jk} \cdot g_{ki},
\end{equation*}
where $K$ is the matrix of $\kappa$ given by \eqref{eq:Mkappa} and
$h_{ij}=h(\partial_{t^i}, \partial_{t^i})$, that is,
\begin{equation*}
h^{t}=K \cdot g.
\end{equation*}
But $h$ satisfies $h(Y, X)= \overline{h(X, Y)}$, so
\begin{equation*}
\overline{h}=h^{t}=K \cdot g,
\end{equation*}
i.e.,
\begin{equation*}
K=\overline{h} \cdot g^{-1},
\end{equation*}
and thus
\begin{equation*}
h^{-1} = \overline{g}^{-1} \cdot \overline{K}^{-1}.
\end{equation*}
Now we will compute $\overline{K}^{-1}$.
\begin{equation*}
\partial_{t^{i}}= \kappa \kappa (\partial_{t^{i}})= \kappa \Big(\sum_{k} K_{ik} \cdot\partial_{t^{k}}\Big)= \sum_{k} \overline{K_{ik}}
\kappa(\partial_{t^{k}})= \sum_{k,l} \overline{K_{ik}} \cdot
K_{kl} \cdot \partial_{t^{l}},
\end{equation*}
that is
\begin{equation*}
\sum_{k,l} \overline{K_{ik}} \cdot K_{kl} = \delta_{i}^{l}.
\end{equation*}
So we have
\begin{equation*}
\overline{K} \cdot K = K \cdot \overline{K}= I_{m \times m}.
\end{equation*}
i.e.,
\begin{equation*}
\overline{K}^{-1}= K.
\end{equation*}
Therefore,
\begin{equation*}
h^{-1} = \overline{g}^{-1} \cdot \overline{K}^{-1} =
\overline{g}^{-1} \cdot K = \overline{g}^{-1} \cdot \overline{h}
\cdot g^{-1}.\qedhere
\end{equation*}
\end{proof}

\begin{lemma}\label{lemma 2th C}
Under the assumptions of Theorem \ref{suff cond}, we have
$$D_h(\kappa)=0.$$
\end{lemma}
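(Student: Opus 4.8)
The plan is to show that the Chern connection $D_h$ of $h$ annihilates $\kappa$, i.e.\ $D_h(\kappa)=0$, by splitting this into its $(1,0)$ and $(0,1)$ parts, $D'(\kappa)=0$ and $\overline\partial(\kappa)=0$, and to read everything off in the holomorphic $\nabla$-flat coordinates $t^1,\dots,t^m$. In these coordinates $\overline\partial$ acts on $\mathcal{T}_M^{(1,0)}$ by the usual $\bar\partial$ on the components, and $D'$ has connection forms $\omega_i^j=\sum_k\partial h_{ik}\cdot h^{kj}$; the endomorphism $\kappa$ has matrix $(K_{ij})$ as in \eqref{eq:Mkappa}. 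I would first record the two identities established just before the lemma, $\overline{h}=h^t=K\cdot g$ (hence $K=\overline h\cdot g^{-1}$) and $\overline K\cdot K=K\cdot\overline K=\Id$ (hence $\overline K^{-1}=K$), since these are exactly the algebraic relations that make $\kappa$ and $h$ compatible.

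For the $(0,1)$-part: since $g$ is $\nabla$-flat its matrix $(g_{ij})$ is constant, so $\overline\partial K=\overline\partial(\overline h\cdot g^{-1})=(\overline\partial\,\overline h)\cdot g^{-1}=\overline{\partial h}\cdot g^{-1}$. Because $\overline\partial$ acting $\mathbb C$-antilinearly on the antilinear endomorphism $\kappa$ produces, after the conjugations, precisely the holomorphic derivative of $K$'s conjugate — one must be careful here: $\kappa$ being $\mathbb C$-antilinear, $(\overline\partial\kappa)$ has matrix built from $\partial$ of the entries — I would track the conjugations explicitly and show the resulting expression vanishes once combined with the $D'$-contribution. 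Concretely, the cleanest route is: compute $D_h(\kappa)$ as an endomorphism-valued $1$-form; its matrix is $d K+\omega\cdot K - K\cdot\overline\omega$ (the conjugate-transpose-type twist in the last term coming from antilinearity of $\kappa$, since $\kappa\circ D_h$ versus $D_h\circ\kappa$ swaps $(1,0)$ and $(0,1)$ and conjugates the connection form). Using $K=\overline h\cdot g^{-1}$, $g$ constant, $\omega=\partial h\cdot h^{-1}$, and Lemma \ref{lemma A} in the form $h^{-1}=\overline g^{-1}\cdot\overline h\cdot g^{-1}$, one substitutes and checks that all terms cancel; the $\overline\partial$ piece uses $\overline{\overline\partial\,\overline h}=\partial h$ and the $(1,0)$ piece uses the formula for $\omega$.

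So the key steps, in order, are: (1) recall $K=\overline h\cdot g^{-1}$, $\overline K\cdot K=\Id$, and $g$ constant; (2) write out $D_h(\kappa)$ in matrix form, being scrupulous about which connection form gets conjugated because $\kappa$ is antilinear — this is the step where the $(1,0)$ and $(0,1)$ parts get intertwined; (3) substitute $K=\overline h\cdot g^{-1}$ and $\omega_i^j=\sum_k\partial h_{ik}h^{kj}$ and use Lemma \ref{lemma A} to simplify; (4) verify the expression is identically zero, which separately yields $D'(\kappa)=0$ and $\overline\partial(\kappa)=0$. The main obstacle I anticipate is bookkeeping step (2): getting the conjugations and the transpose right when differentiating an antilinear bundle map with the Chern connection, so that $D_h(\kappa)$ is correctly identified as $dK+\omega K-K\overline\omega$ rather than some other twist; everything after that is a direct substitution using the already-proven algebraic relations and the constancy of $g$. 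Note also that the hypotheses \eqref{harmonic} of Theorem \ref{suff cond} are not needed for this particular lemma — only $h(a,b)=g(a,\kappa b)$, $\kappa^2=\Id$, $h(a,b)=\overline{h(b,a)}$, and $\nabla g=0$ — so the proof should not invoke the $tt^*$-type equations at all.
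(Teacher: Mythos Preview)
Your overall strategy coincides with the paper's: work in $\nabla$-flat holomorphic coordinates, use the algebraic relations $K=\overline h\,g^{-1}$, $\overline K\cdot K=\Id$, $g$ constant, and Lemma~\ref{lemma A}, then reduce $D_h(\kappa)=0$ to a matrix identity and verify it by substitution. Your final remark that the $tt^*$-equations \eqref{harmonic} play no role is also correct.

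The one concrete error is the matrix formula in step~(2). For an \emph{antilinear} endomorphism $\kappa$ one has, on the holomorphic frame $\partial_{t^i}$ with Chern connection form $\omega$ (which is of type $(1,0)$),
\[
(D_h\kappa)\ \longleftrightarrow\ dK + K\,\omega - \overline{\omega}\,K,
\]
not $dK+\omega K-K\overline\omega$ as you wrote; both products are in the wrong order. With the correct formula the $(0,1)$-part $\overline\partial K-\overline\omega K$ vanishes immediately (substitute $K=\overline h\,g^{-1}$ and $\omega=\partial h\cdot h^{-1}$), and the $(1,0)$-part is exactly the paper's key equation
\[
\partial K + K\,\omega = 0.
\]
The paper reaches this same equation by a slightly different reformulation: it first observes that for antilinear $\kappa$ the condition $D_h(\kappa)=0$ is equivalent to $D'_X\circ\kappa=\kappa\circ\overline\partial_{\overline X}$ for all $X\in\Theta_M$, and since $\overline\partial_{\overline X}\partial_{t^j}=0$ on the holomorphic frame, this reduces at once to $\partial K+K\omega=0$. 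From there the paper computes $\omega=\partial h\cdot h^{-1}=-K^{-1}\partial K$ using Lemma~\ref{lemma A} exactly as you propose in steps (3)--(4). So your plan is sound once the order of the matrix products is fixed; the paper's operator-level reformulation is simply a cleaner way to avoid the bookkeeping you flagged as the main obstacle.
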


\begin{proof}[Proof of Lemma \ref{lemma 2th C}]
By definition $D_h (\kappa)=0$ is equivalent to
\begin{equation*}
D'_X \kappa = \kappa \overline{\partial}_{\overline{X}},\quad
\forall X \in \Theta_M
\end{equation*}
This is equivalent to
\begin{equation*}
D'_{\partial_{t^i}} \kappa (\partial_{t^j}) = \kappa
\overline{\partial}_{\overline{\partial_{t^i}}}
(\partial_{t^j})\quad\forall i,j.
\end{equation*}
Clearly, the right-hand term is zero, hence proving
$D_h(\kappa)=0$ amounts to proving
\begin{equation}\label{eq:M}
\partial K + K\cdot\omega = 0.
\end{equation}

From Lemma \ref{lemma A}, we get
\begin{eqnarray*}
\omega
& = & \partial h \cdot h^{-1} \\
& = & (\partial \overline{K} \cdot \overline{g}) \cdot (\overline{g}^{-1} \cdot \overline{h} \cdot g^{-1}) \\
& = & \partial \overline{K} \cdot \overline{h} \cdot g^{-1} \\
& = & \partial K^{-1} \cdot \overline{h} \cdot g^{-1} \\
& = & - K^{-1} \cdot \partial K \cdot K^{-1} \cdot \overline{h} \cdot g^{-1} \\
& = & - \overline{K} \cdot \partial K \cdot \overline{K} \cdot \overline{h} \cdot g^{-1} \\
& = & - \overline{K} \cdot \partial K \\
& = & - K^{-1} \cdot \partial K,
\end{eqnarray*}
and thus,
$$\omega + K^{-1} \cdot \partial K =0,$$
which gives \eqref{eq:M}.
\end{proof}

Let us continue the proof of Theorem \ref{suff cond}. Having
proved $D(\kappa)=0$, we obtain
$$(D')^2 \kappa = D'\kappa \overline{\partial} = \kappa \overline{\partial}^2
=0,$$ and since $\kappa$ is an involution, we deduce $$(D')^2=0.$$

So, together with the assumption \eqref{harmonic}, we conclude
that $(D_{h}+ \Phi + \Phi^{\dag})^{2}=0$. Hence
$(\mathcal{T}_M^{(1,0)} \rightarrow M, D_h, \Phi, \Phi^{\dag})$ is
a $(DC\tilde{C})$-structure.

The relations (\ref{involution}), (\ref{CC}), (\ref{real h}),
(\ref{duality}), (\ref{flath}), (\ref{CU}) and (\ref{U}) hold
obviously.

The relation (\ref{UCQ}) follows from $D'(\Phi)=0$. In fact, if
$D'(\Phi)=0$ holds, we have $D'_{X}(\Phi_{\mathcal{E}})-
D'_{\mathcal{E}}(\Phi_{X})-\Phi_{[X, \mathcal{E}]} =0$ for any
$X\in\Theta_M$. So $D'(\mathcal{U})-[\Phi,\mathcal{Q}]+\Phi=0$ is
equivalent to $\mathcal{L}_{\mathcal{E}}(\circ)=\circ$, which is
part of the assumption that $M$ is a Frobenius manifold.

We will show that relation (\ref{QCU}) can be deduced from the
assumption $\mathcal{Q}= \mathcal{Q}^{\dag}$ and \eqref{harmonic}.

\begin{lemma}\label{lemma J}
Under the assumptions of Theorem \ref{suff cond}, we have
\begin{equation*}
D'(\mathcal{Q})+[\Phi,\kappa \mathcal{U} \kappa]=0
\end{equation*}
\end{lemma}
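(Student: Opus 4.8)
The plan is to mimic the argument just used for relation \eqref{UCQ}, but now exploiting the conjugate-symmetric partner of the relation $D'(\Phi)=0$ together with the hypothesis $\mathcal{Q}=\mathcal{Q}^\dag$. First I would observe that the second half of \eqref{harmonic}, read together with $D'(\Phi)=0$ and the integrability of the holomorphic structure, forces the $(0,1)$-counterpart $\overline\partial(\Phi^\dag)=0$; equivalently, since $\Phi^\dag=\kappa\Phi\kappa$ and $D(\kappa)=0$ has already been established in Lemma \ref{lemma 2th C}, one gets $D'(\kappa\Phi\kappa)=\kappa\,(\overline\partial\Phi)\,\kappa=0$ by transporting $\overline\partial(\Phi)=0$ (which is itself part of the flatness of the holomorphic structure $\overline\partial+\Phi$ on $M$ as a Frobenius/Saito manifold) through $\kappa$. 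So $D'(\kappa\mathcal{U}\kappa)$ and $D'(\kappa C\kappa)$ behave exactly like $D'(\mathcal{U})$ and $D'(C)$ under conjugation by $\kappa$.

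Next I would apply the adjoint operation $\dag$ (with respect to $h$) to the already-proved relation \eqref{UCQ}, namely $D'(\mathcal{U})-[C,\mathcal{Q}]+C=0$. Using $D(\kappa)=0$ one checks that taking $h$-adjoints intertwines $D'$ with $\overline\partial$ in the expected way, and that $C^\dag=\tilde C=\kappa C\kappa$, $\mathcal{U}^\dag=\kappa\mathcal{U}\kappa$ (these are the relations \eqref{UU}, \eqref{duality} type identities, which hold obviously as noted in the proof), and $\mathcal{Q}^\dag=\mathcal{Q}$ by hypothesis \eqref{Q}. Adjoining \eqref{UCQ} then yields an identity of the shape
\begin{equation*}
\overline\partial(\kappa\mathcal{U}\kappa)-[\kappa C\kappa,\mathcal{Q}]+\kappa C\kappa=0.
\end{equation*}
This is the $(0,1)$-"mirror" of \eqref{UCQ}. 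Now I would combine it with the relation I want, $D'(\mathcal{Q})+[C,\kappa\mathcal{U}\kappa]=0$, and with the curvature relation \eqref{tt2} (i.e.\ the second equation in \eqref{harmonic}) and the $z$-component of the flatness of $\nabla^{CV}$: the flatness of the meromorphic connection $\nabla^{CV}$ in Proposition \ref{CDV-structure2}(4) is exactly the conjunction of \eqref{tt1}, \eqref{tt2}, \eqref{holoC}, \eqref{CU}, \eqref{UCQ}, \eqref{U}, \eqref{QCU}. Since I will have verified every piece except \eqref{QCU}, the cleanest route is to write out the $dz\wedge$(something) component of $(\nabla^{CV})^2$ — more precisely, comparing the $\mathcal{A}^{1,0}$-part and the $\mathcal{A}^{0,1}$-part of the curvature — and show that \eqref{QCU} is forced once \eqref{UCQ} and its $\dag$ hold, because the remaining obstruction is a $D'$-closed $\mathcal{A}^{1,0}$-valued endomorphism whose $\overline\partial$-derivative is controlled by the mirror identity, and a connectedness/analyticity argument pins it down.

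The main obstacle I anticipate is bookkeeping of how the adjoint $\dag$ interacts with the splitting $D=D'+\overline\partial$ and with commutators: one must be careful that $(D'A)^\dag=\overline\partial(A^\dag)$ only because $D(h)=0$ and $D(\kappa)=0$, and that $[C,\mathcal{Q}]^\dag=[\mathcal{Q}^\dag,C^\dag]=[\mathcal{Q},\tilde C]$, which reverses the order of the bracket — sign errors here are the real danger. A secondary subtlety is justifying $\overline\partial(\Phi)=0$ cleanly: it should be invoked as the $(0,1)$-part of the statement that $(\overline\partial+\Phi)^2=0$ (the holomorphicity of the Higgs field of the Frobenius structure), which is independent of $\kappa$ and hence available from the outset; once that is in hand, transporting it by $\kappa$ via Lemma \ref{lemma 2th C} gives $D'(\kappa\Phi\kappa)=0$ essentially for free. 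Granting these, the lemma follows by pure linear algebra on the endomorphism bundle, with no further analytic input.
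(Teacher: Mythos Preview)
Your plan has a genuine gap. Taking the $h$-adjoint of \eqref{UCQ} indeed gives only the $(0,1)$-mirror
\[
\overline\partial_{\overline X}(\kappa\mathcal U\kappa)+[\Phi^\dag_{\overline X},\mathcal Q]+\Phi^\dag_{\overline X}=0,
\]
which is \emph{not} \eqref{QCU}; you recognize this. The bridge you propose from this mirror identity to the $(1,0)$-relation $D'(\mathcal Q)+[\Phi,\kappa\mathcal U\kappa]=0$ is where the argument breaks down. Invoking the flatness of $\nabla^{CV}$ is circular: that flatness is part of what Theorem~\ref{suff cond} is establishing, so you cannot assume it to deduce \eqref{QCU}. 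And the fallback claim---that the obstruction $S:=D'(\mathcal Q)+[\Phi,\kappa\mathcal U\kappa]$ is a $D'$-closed $\mathcal A^{1,0}$-valued endomorphism with ``controlled'' $\overline\partial$-derivative, hence vanishes by ``connectedness/analyticity''---is not a proof. There is no initial condition, boundary value, or uniqueness principle in sight that would force $S=0$; a $D'$-closed form with prescribed $\overline\partial$-derivative is simply not determined.

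The paper's argument is both different and direct, and uses exactly the two hypotheses \eqref{Q} and \eqref{harmonic}. First, it evaluates the curvature identity in \eqref{harmonic} at the pair $(X,\overline{\mathcal E})$ for holomorphic $X$: since $\Phi^\dag_{\overline{\mathcal E}}=-\kappa\mathcal U\kappa$, this yields
\[
-[\Phi_X,\kappa\mathcal U\kappa]=[\overline\partial_{\overline{\mathcal E}},D'_X]+(D'+\overline\partial)_{[X,\overline{\mathcal E}]}.
\]
Second, it computes $D'_W(\mathcal Q)$ directly from the hypothesis $\mathcal Q=\mathcal Q^\dag$: differentiating $h(\mathcal QX,Y)=h(X,\mathcal QY)$ with $W\in\Theta_M$ and using $D(h)=0$ together with $\mathcal Q=D'_{\mathcal E}-\mathcal L_{\mathcal E}-\tfrac{2-d}{2}\Id$ gives, after a short commutation of $\partial_W$ and $\overline\partial_{\overline{\mathcal E}}$ on $h(X,Y)$,
\[
D'_W(\mathcal Q)=[\overline\partial_{\overline{\mathcal E}},D'_W]\quad(=\,[\overline\partial_{\overline{\mathcal E}},D'_W]+(D'+\overline\partial)_{[W,\overline{\mathcal E}]}\ \text{since }W\in\Theta_M).
\]
Comparing the two displays gives \eqref{QCU}. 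No adjoint of \eqref{UCQ}, no flatness of $\nabla^{CV}$, and no analyticity argument are needed; the key computational step you are missing is this direct evaluation of $D'_W(\mathcal Q)$ from $\mathcal Q=\mathcal Q^\dag$ via the metric $h$.
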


\begin{proof}[Proof of Lemma \ref{lemma J}]
We first show that, under the conditions of Theorem \ref{suff
cond},
\begin{equation}\label{eq:DXQ1}
-[\Phi,\kappa \mathcal{U} \kappa]=[\overline{\partial}_{\overline{\mathcal{E}}},D'_X]+(D'+\overline{\partial})_{[X,
\overline{\mathcal{E}}]},\quad\forall X\in\Theta_M.
\end{equation}
In fact, $D' \overline{\partial}+\overline{\partial}D' = -(\Phi
\wedge \Phi^{\dagger} + \Phi^{\dagger} \wedge \Phi )$ holds, so we
have
\begin{equation*}
(D' \overline{\partial}+\overline{\partial}D')(X,\overline{Y})=
-(\Phi \wedge \Phi^{\dagger} + \Phi^{\dagger} \wedge \Phi
)(X,\overline{Y}),\quad \forall X, Y \in \mathcal{T}_M^{(1,0)}.
\end{equation*}
That is,
\begin{equation*}
[D'_X,\overline{\partial}_{\overline{Y}}]-(D'+\overline{\partial})_{[X, \overline{Y}]}=-[\Phi_X,\Phi^{\dag}_{\overline{Y}}].
\end{equation*}
Take $Y= \mathcal{E}$, then
\begin{equation*}
[D'_{X},\overline{\partial}_{\overline{\mathcal{E}}}]-(D'+\overline{\partial})_{[X,\overline{\mathcal{E}}]}=-[\Phi_{X},\Phi^{\dag}_{\overline{\mathcal{E}}}]=[\Phi_{X},\kappa
\mathcal{U} \kappa],\quad\forall X \in \mathcal{T}_M^{(1,0)}.
\end{equation*}
This relation is linear with respect to $X$, so it holds for all
$X \in \mathcal{T}_M^{(1,0)}$ if and only if it holds for all
$X\in\Theta_M$, hence \eqref{eq:DXQ1}.

Now we will prove the lemma by proving
\begin{equation}\label{eq:DXQ}
D'(\mathcal{Q})=[\overline{\partial}_{\overline{\mathcal{E}}},D'_X]+(D'+\overline{\partial})_{[X,
\overline{\mathcal{E}}]},\quad\forall X\in\Theta_M.
\end{equation}
Under the assumptions of the
theorem, we have $\mathcal{Q}^{\dag}=\mathcal{Q}$, that is,
\begin{equation*}
h(\mathcal{Q}X, Y)=h(X, \mathcal{Q} Y),\quad \forall X, Y \in
\Theta_M.
\end{equation*}
So $\forall W \in \Theta_M$ we have
\begin{equation*}
\partial_{W} h(\mathcal{Q}X, Y) = h(D'_W \mathcal{Q} X, Y),
\end{equation*}
\begin{equation*}
\partial_{W} h(X,\mathcal{Q} Y) = h(D'_W X, \mathcal{Q} Y)+ h(X, \overline{\partial}_{\overline{W}} \mathcal{Q} Y),
\end{equation*}
So we get
\begin{equation*}
h(D'_W \mathcal{Q} X, Y)- h(D'_W X, \mathcal{Q} Y)= h(X,
\overline{\partial}_{\overline{W}} \mathcal{Q} Y),
\end{equation*}
i.e.,
\begin{equation}\label{WQD}
h(D'_W \mathcal{Q} X - \mathcal{Q} D'_W X, Y)= h(X,
\overline{\partial}_{\overline{W}} \mathcal{Q} Y)= h(X,
\overline{\partial}_{\overline{W}} D'_{\mathcal{E}} Y),
\end{equation}

\begin{claim}
$h(X, \overline{\partial}_{\overline{W}} D'_{\mathcal{E}} Y) =
h(\overline{\partial}_{\overline{\mathcal{E}}} D'_{W} X, Y)$,
$\forall X, Y, W \in \Theta_M$.
\end{claim}

In fact,
$$\overline{\partial}_{\overline{\mathcal{E}}} h(X, Y) =
h (X, D'_{\mathcal{E}} Y)$$ So
\begin{eqnarray*}
\partial_{W} \overline{\partial}_{\overline{\mathcal{E}}} h(X, Y)
& = & \partial_{W}h (X, D'_{\mathcal{E}} Y) \\
& = & h(D'_{W} X, D'_{\mathcal{E}} Y)+h(X,
\overline{\partial}_{\overline{W}} D'_{\mathcal{E}} Y )
\end{eqnarray*}
Similarly, we have
\begin{eqnarray*}
\overline{\partial}_{\overline{\mathcal{E}}} \partial_{W} h(X, Y)
& = & \overline{\partial}_{\overline{\mathcal{E}}} h (D'_W X, Y) \\
& = & h(D'_{W} X, D'_{\mathcal{E}}
Y)+h(\overline{\partial}_{\overline{\mathcal{E}}} D'_{W} X, Y )
\end{eqnarray*}
Since $W \in \Theta_M$, we get
\begin{equation*}
\partial_{W} \overline{\partial}_{\overline{\mathcal{E}}} h(X, Y)
=\overline{\partial}_{\overline{\mathcal{E}}} \partial_{W} h(X,Y),
\end{equation*}
hence
\begin{equation}\label{WEEW}
h(X, \overline{\partial}_{\overline{W}} D'_{\mathcal{E}} Y ) =
h(\overline{\partial}_{\overline{\mathcal{E}}} D'_{W} X, Y ).
\end{equation}
The relations (\ref{WQD}) and (\ref{WEEW}) imply
\begin{equation*}
h(D'_W (\mathcal{Q}) X, Y)=
h(\overline{\partial}_{\overline{\mathcal{E}}} D'_{W} X, Y )=
h([\overline{\partial}_{\overline{\mathcal{E}}}, D'_{W} ] X, Y
),\quad\forall X, Y, Y \in \Theta_M.
\end{equation*}
Because $h$ is non-degenerate,
\begin{equation*}
D'_W (\mathcal{Q})= [\overline{\partial}_{\overline{\mathcal{E}}},
D'_{W} ] = [\overline{\partial}_{\overline{\mathcal{E}}}, D'_{W}
]+ (D'+ \overline{\partial})_{[W,
\overline{\mathcal{E}}]},\quad\forall W \in \Theta_M,
\end{equation*}
hence \eqref{eq:DXQ}, and this ends the proof of Lemma \ref{lemma
J}.
\end{proof}
Let us continue to prove the theorem \ref{suff cond}.

\begin{claim}\label{claim0}
$D_h(\kappa)=0 \Leftrightarrow D'(g)=0$.
\end{claim}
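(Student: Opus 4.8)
The plan is to derive the equivalence directly from the defining relation $h(X,Y)=g(X,\kappa Y)$ together with what has already been proved, with no extra input. Since $\kappa^{2}=\Id$, this relation equivalently reads $g(X,Y)=h(X,\kappa Y)$ for all local sections $X,Y$ of $\mathcal T_M^{(1,0)}$. The second ingredient is the defining property of the Chern connection $D_h=D'+\overline\partial$ of $h$, i.e.\ the Leibniz identity
\[
\partial_W h(X,Z)=h(D'_W X,Z)+h\bigl(X,\overline\partial_{\overline W}Z\bigr),\qquad W\in\Theta_M .
\]
The third is that, $\kappa$ being $\mathbb C$-antilinear, it interchanges the two types of covariant derivative, so that $D_h(\kappa)=0$ unwinds --- exactly as already used in the proof of Lemma~\ref{lemma 2th C} --- into the pair of intertwining relations $D'_W(\kappa Y)=\kappa\bigl(\overline\partial_{\overline W}Y\bigr)$ and $\overline\partial_{\overline W}(\kappa Y)=\kappa(D'_W Y)$, each of which implies the other (replace $Y$ by $\kappa Y$ and use $\kappa^{2}=\Id$). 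It therefore suffices to match $D'(g)=0$ with the second of these two relations.

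For the implication $D_h(\kappa)=0\Rightarrow D'(g)=0$ I would simply compute, for $W,X,Y\in\Theta_M$,
\[
\partial_W g(X,Y)=\partial_W h(X,\kappa Y)=h(D'_W X,\kappa Y)+h\bigl(X,\overline\partial_{\overline W}(\kappa Y)\bigr)=g(D'_W X,Y)+h\bigl(X,\kappa(D'_W Y)\bigr),
\]
where the first step uses $g(\cdot,\cdot)=h(\cdot,\kappa\cdot)$, the second is the Chern--Leibniz identity, and the third uses $\overline\partial_{\overline W}(\kappa Y)=\kappa(D'_W Y)$. Since $h(X,\kappa(D'_W Y))=g(X,D'_W Y)$, this says $\partial_W g(X,Y)=g(D'_W X,Y)+g(X,D'_W Y)$, i.e.\ $(D'_W g)(X,Y)=0$; as $W,X,Y$ were arbitrary, $D'(g)=0$.

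For the converse, $D'(g)=0\Rightarrow D_h(\kappa)=0$, I would read the same chain backwards. If $D'(g)=0$, then $\partial_W g(X,Y)=g(D'_W X,Y)+g(X,D'_W Y)=h(D'_W X,\kappa Y)+h(X,\kappa(D'_W Y))$; on the other hand the Chern--Leibniz identity applied to $g(X,Y)=h(X,\kappa Y)$ gives $\partial_W g(X,Y)=h(D'_W X,\kappa Y)+h\bigl(X,\overline\partial_{\overline W}(\kappa Y)\bigr)$. Cancelling the common term leaves $h\bigl(X,\overline\partial_{\overline W}(\kappa Y)\bigr)=h\bigl(X,\kappa(D'_W Y)\bigr)$ for all $X$, and non-degeneracy of $h$ forces $\overline\partial_{\overline W}(\kappa Y)=\kappa(D'_W Y)$ for all $W,Y$; this is the second intertwining relation, hence $D_h(\kappa)=0$.

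I expect the only delicate point to be the bookkeeping of types in the first step: one must verify carefully that the antilinearity of $\kappa$ makes $D_h(\kappa)=0$ split into exactly those two intertwining relations, and that the Chern--Leibniz identity is being contracted against the $\kappa$-slot in the correct way; once this is pinned down the two implications are one-line computations and nothing further is needed. (Alternatively the equivalence can be checked through the matrix formulas of \S\ref{subsection2a}: in holomorphic $\nabla$-flat coordinates $g$ is a constant matrix, so $D'(g)=0$ reads $\omega g+g\,\omega^{t}=0$ for the connection matrix $\omega=\partial h\cdot h^{-1}$, whereas $D_h(\kappa)=0$ reads $\partial K+K\omega=0$ by \eqref{eq:M}; from $h=g\,K^{t}$ one gets the identity $\omega=g\,(K^{-1}\partial K)^{t}g^{-1}$, and substituting it reduces both equations to the single relation $A g+g\,A^{t}=0$ with $A=K^{-1}\partial K$.)
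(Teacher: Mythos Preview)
Your argument is correct and is essentially the same as the paper's: the paper proves the forward implication by the identical chain $\partial g(X,Y)=\partial h(X,\kappa Y)=h(D'X,\kappa Y)+h(X,\overline\partial\,\kappa Y)=h(D'X,\kappa Y)+h(X,\kappa D'Y)=g(D'X,Y)+g(X,D'Y)$, and for the converse simply says ``the other direction holds similarly.'' Your explicit write-up of that converse (reading the chain backwards and invoking non-degeneracy of $h$) and your parenthetical matrix alternative are welcome additions, but the core proof matches the paper's exactly.
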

We just prove $D_h(\kappa)=0 \Rightarrow D'(g)=0$, the other
direction holds similarly.

Since $D$ is the Chern connection of $h$, we have
\begin{equation*}
\partial h(X, Y) = h(D' X, Y) + h(X, \overline{\partial}Y),
\forall X, Y \in \mathcal{T}_M^{(1, 0)}.
\end{equation*}
If $D_h(\kappa)=0$, by $h(X, Y)= g(X, \kappa Y)$ and $\kappa^2 =
\Id$ we get
\begin{eqnarray*}
\partial g(X, Y) & = & \partial h(X, \kappa Y)\\
& = &  h(D' X, \kappa Y) + h(X, \overline{\partial}\kappa Y) \\
& = & h(D' X, \kappa Y) + h(X, \kappa D' Y) \\
& = & g(D' X, Y) + g(X, D' Y),
\end{eqnarray*}
i.e. $$D'(g)=0.\eqno\qed$$

\begin{claim}\label{claim01}
Under the condition $D'(g)=0$, $$\mathcal{Q}+ \mathcal{Q}^* =0
\Leftrightarrow \mathcal{L}_{\mathcal{E}}(g)= (2-d) \cdot g.$$
\end{claim}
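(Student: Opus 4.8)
The plan is to prove the equivalence by a single direct computation, expanding $\mathcal{Q}+\mathcal{Q}^{*}$ against $g$. Since $\mathcal{Q}=D'_{\mathcal{E}}-\mathcal{L}_{\mathcal{E}}-\frac{2-d}{2}\Id$ is an $\mathcal{O}_M$-linear endomorphism of $\mathcal{T}_M^{(1,0)}$, its $g$-adjoint $\mathcal{Q}^{*}$ is well defined and, for all $X,Y\in\Theta_M$,
\[
g\big((\mathcal{Q}+\mathcal{Q}^{*})X,Y\big)=g(\mathcal{Q}X,Y)+g(X,\mathcal{Q}Y).
\]
First I would substitute the definition of $\mathcal{Q}$ and split the right-hand side into three contributions: the $D'_{\mathcal{E}}$-part $g(D'_{\mathcal{E}}X,Y)+g(X,D'_{\mathcal{E}}Y)$, the $\mathcal{L}_{\mathcal{E}}$-part $-\big(g(\mathcal{L}_{\mathcal{E}}X,Y)+g(X,\mathcal{L}_{\mathcal{E}}Y)\big)$, and the scalar part $-(2-d)\,g(X,Y)$.

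Next I would evaluate the first two brackets. Because $\mathcal{E}$ is a holomorphic (hence purely $(1,0)$) vector field, only the $(1,0)$-part $D'$ of the Chern connection enters, and the hypothesis $D'(g)=0$ gives
\[
g(D'_{\mathcal{E}}X,Y)+g(X,D'_{\mathcal{E}}Y)=\mathcal{E}\cdot g(X,Y).
\]
For the second bracket, the standard formula for the Lie derivative of a metric, $\mathcal{L}_{\mathcal{E}}(g)(X,Y)=\mathcal{E}\cdot g(X,Y)-g([\mathcal{E},X],Y)-g(X,[\mathcal{E},Y])$, together with $\mathcal{L}_{\mathcal{E}}X=[\mathcal{E},X]$, yields
\[
g(\mathcal{L}_{\mathcal{E}}X,Y)+g(X,\mathcal{L}_{\mathcal{E}}Y)=\mathcal{E}\cdot g(X,Y)-\mathcal{L}_{\mathcal{E}}(g)(X,Y).
\]
Adding the three contributions, the two copies of $\mathcal{E}\cdot g(X,Y)$ cancel and one is left with
\[
g\big((\mathcal{Q}+\mathcal{Q}^{*})X,Y\big)=\mathcal{L}_{\mathcal{E}}(g)(X,Y)-(2-d)\,g(X,Y),\qquad\forall X,Y\in\Theta_M.
\]

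Finally, since $g$ is non-degenerate, this identity shows that $\mathcal{Q}+\mathcal{Q}^{*}=0$ holds if and only if $\mathcal{L}_{\mathcal{E}}(g)(X,Y)=(2-d)\,g(X,Y)$ for all $X,Y$, that is, $\mathcal{L}_{\mathcal{E}}(g)=(2-d)\,g$, which is the assertion. I do not expect a genuine obstacle here: the argument is essentially bookkeeping, and the only point requiring care is in the first bracket, where one must observe that $\mathcal{E}$ has no $(0,1)$-component, so that the hypothesis $D'(g)=0$ (rather than the full $D(g)=0$) is exactly what is needed.
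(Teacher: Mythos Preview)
Your proof is correct and follows essentially the same approach as the paper: both expand $g(\mathcal{Q}X,Y)+g(X,\mathcal{Q}Y)$ directly, use $D'(g)=0$ to rewrite $g(D'_{\mathcal{E}}X,Y)+g(X,D'_{\mathcal{E}}Y)=\mathcal{E}\,g(X,Y)$, and then recognize the remaining terms as $\mathcal{L}_{\mathcal{E}}(g)(X,Y)-(2-d)\,g(X,Y)$. Your write-up is slightly more explicit about the Lie-derivative identity and the role of $\mathcal{E}$ being holomorphic, but there is no substantive difference.
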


This will prove that $\mathcal{Q}+ \mathcal{Q}^* =0$ holds under
the assumption of Theorem \ref{suff cond}, according to Lemma
\ref{lemma 2th C} and Claim \ref{claim0}, since the relation
$\mathcal{L}_{\mathcal{E}}(g)= (2-d) \cdot g$ is included in the
definition a Frobenius manifold.

\begin{proof}[Proof of Claim \ref{claim01}]
In fact, by definition, $\mathcal{Q}+ \mathcal{Q}^* =0$ is
equivalent to
$$g(\mathcal{Q}X, Y) + g(X, \mathcal{Q} Y) =0.$$
Computing the left hand side of the above relation, we get
\begin{align*}
g(\mathcal{Q}X, Y)&+ g(X, \mathcal{Q} Y) = g(D'_{\mathcal{E}} X,
Y) - g(\mathcal{L}_{\mathcal{E}}X, Y)-
\frac{2-d}{2} \cdot g(X, Y) \\
&\hspace*{2.4cm} +  g( X, D'_{\mathcal{E}} Y) - g(X, \mathcal{L}_{\mathcal{E}} Y)- \frac{2-d}{2} \cdot g(X, Y) \\
& =  \mathcal{E}g( X, Y) - g(\mathcal{L}_{\mathcal{E}}X, Y)-  g(X, \mathcal{L}_{\mathcal{E}} Y)-(2-d)\cdot g(X, Y)\\
& = \mathcal{L}_{\mathcal{E}}(g)(X, Y)- (2-d)\cdot g(X, Y)\\
& =  [\mathcal{L}_{\mathcal{E}}(g)- (2-d)\cdot g](X, Y).\qedhere
\end{align*}
\end{proof}

However $\mathcal{Q}+ \mathcal{Q}^* =0$ together with the
assumption $\mathcal{Q}= \mathcal{Q}^{\dag}$ imply
\begin{equation*}
\mathcal{Q} + \kappa \mathcal{Q} \kappa =0.
\end{equation*}
The relation $\mathcal{U}^{\dag} = \kappa \mathcal{U} \kappa$
holds because $\Phi^{\dag} = \kappa \Phi \kappa$. Hence
$(\mathcal{T}_M^{(1,0)} \rightarrow M, D_h, \Phi, \Phi^{\dag},
\kappa, h, \mathcal{U}, \mathcal{Q})$ is a CV-structure.

It remains to prove $D_{e}e=0$. As $e$ is holomorphic, it is
enough to prove
\begin{equation}\label{eq:Dee}
D'_e e=0.
\end{equation}
By the assumption of Theorem \ref{suff cond}, we have
$D'(\Phi)=0$, and since $\Phi_{e}=-\Id$ this implies
\begin{equation*}
D'_{e}(\Phi_{\partial_{t^i}})=
D'_{\partial_{t^i}}(\Phi_{e})=0,\quad\forall i
\end{equation*}
and therefore,
\begin{equation*}
D'_{e}(\Phi_{\partial_{t^i}})(e)=0\quad\forall i.
\end{equation*}
Computing the above relation we get
\begin{align*}
D'_{e}(\Phi_{\partial_{t^i}})(e)
& =  D'_{e}(\Phi_{\partial_{t^i}}e)- \Phi_{\partial_{t^i}}D'_{e}e \\
& =  -D'_{e} \partial_{t^i} + \partial_{t^i} \circ D'_{e}e \\
& =  0.
\end{align*}
i.e.,
\begin{equation*}
D'_{e} \partial_{t^i} = \partial_{t^i} \circ D'_{e}e, \quad\forall
i.
\end{equation*}
On the other hand, $D'(g)=0$ holds as a consequence of Lemma
\ref{lemma 2th C} and Claim \ref{claim0}, hence
\begin{equation*}
\partial g (X, Y)=g(D'X,Y)+g(X, D'Y),\quad \forall X, Y \in
\mathcal{T}_M^{(1,0)},
\end{equation*}
and therefore
\begin{equation*}
\partial_e g (e, Y)=g(D'_e e,Y)+g(e, D'_e Y),\quad \forall Y \in
\mathcal{T}_M^{(1,0)}.
\end{equation*}
Take $Y= \partial_{t^i}$. Then
\[
g(e, D'_e \partial_{t^i})=g(e, \partial_{t^i} \circ D'_e
e)=g(\partial_{t^i} \circ e, D'_e e)=g(D'_e e,\partial_{t^i}).
\]
As a consequence, since $e$ is holomorphic and flat,
\[
0=e g(e, \partial_{t^i})=g(D'_e e,\partial_{t^i})+g(e, D'_e
\partial_{t^i})=2 g(D'_e e,\partial_{t^i})\quad\forall i,
\]
giving thus \eqref{eq:Dee}. This ends the proof of Theorem
\ref{suff cond}.
\end{proof}

\subsection{Existence of a CDV$\oplus$-structure: proof of Theorem \ref{existence}}\label{subsection2d}

Let $(u^1, u^2,\dots, u^m)$ be a system of canonical local
coordinates of $M$. We will denote $e_\alpha=\partial_{u^\alpha}$
for $\alpha=1,\dots,m$. The matrix $K$ defined in Theorem
\ref{existence} obviously satisfies $$\overline{K}\cdot K=
I_{m\times m}.$$ Therefore, the associated anti-linear
endomorphism $\kappa$ is an involution of $\mathcal{T}_M^{(1,
0)}$. We will check this $\kappa$ together with the Frobenius
manifold structure define a CDV$\oplus$-structure on $M$.

Let $h$ be the sesquilinear form associated to $\kappa$ and $g$ as
in Proposition \ref{CDV-structure2}\eqref{CDV-structure21}. Then
\begin{equation}\label{hii}
(h_{\alpha \beta})_{m \times m}= \diag(|\eta_1|, |\eta_2|,\dots,
|\eta_m|).
\end{equation}
Since $g$ is non-degenerate, $\eta_\alpha:=g(e_\alpha,e_\alpha)$
does not vanish and $h$ is a Hermitian metric on $M$. Let $D'$ be
the Chern connection of $h$. Then the matrix $\omega$ of
connection forms of $D'$ satisfies
\begin{equation*}
\omega= \diag(\partial \log{|\eta_1|}, \partial \log{|\eta_2|},
\cdots, \partial \log{|\eta_m|}).
\end{equation*}
By a straightforward computation we get
\begin{equation}\label{omegaii}
\omega= \diag\Big(\frac{\partial \eta_1}{2\eta_1}, \frac{\partial
\eta_2}{2\eta_2},\dots, \frac{\partial \eta_m}{2\eta_m}\Big).
\end{equation}
So all $\omega_{\alpha}^{\alpha}$ are holomorphic $1$-forms.

\begin{claim*}
The Chern connection $D'$ of $h$ defined by \eqref{hii} satisfies
\eqref{harmonic}.
\end{claim*}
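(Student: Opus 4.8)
The claim to prove is that the Chern connection $D'$ of the diagonal metric $h=\diag(|\eta_1|,\dots,|\eta_m|)$ in canonical coordinates satisfies the second relation in \eqref{harmonic}, namely $D'(\Phi)=0$ together with $D'\overline\partial+\overline\partial D'=-(\Phi\wedge\Phi^\dag+\Phi^\dag\wedge\Phi)$. The strategy is to reduce everything to explicit matrix computations in the canonical coordinate system, where all the operators $\Phi$, $\Phi^\dag$, $D'$ become very simple. Recall that in canonical coordinates $u^\alpha$ the multiplication is $e_\alpha\circ e_\beta=\delta_{\alpha\beta}e_\alpha$, so $\Phi_{e_\alpha}$ is (minus) the projection onto the $\alpha$-th line; the metric $g$ is diagonal with entries $\eta_\alpha$, and $h$ is diagonal with entries $|\eta_\alpha|$; and by \eqref{omegaii} the Chern connection form $\omega$ is the diagonal matrix with entries $\partial\eta_\alpha/(2\eta_\alpha)$, which is holomorphic. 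The key structural input one may not take for granted is the flatness of $\nabla$ expressed through the rotation coefficients $\gamma_{\alpha\beta}$ (Dubrovin's Darboux–Egoroff equations), which govern how $\nabla_{e_\alpha}e_\beta$ mixes the canonical frame.

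First I would write out $D'(\Phi)=0$ componentwise. Since $\Phi_{e_\alpha}e_\beta=-\delta_{\alpha\beta}e_\alpha$, the condition $(D'_X\Phi)_Y Z:=D'_X(\Phi_Y Z)-\Phi_{D'_X Y}Z-\Phi_Y(D'_X Z)=0$ becomes a relation among the entries of $\omega$ in the canonical frame; because $\omega$ is \emph{diagonal} (each $e_\alpha$ is an eigenvector, and $D'_X e_\alpha$ is proportional to $e_\alpha$), the off-diagonal obstructions vanish identically and the diagonal ones reduce to $X\cdot(\text{const})=0$. So $D'(\Phi)=0$ should fall out essentially from the diagonality of $\omega$ plus the fact that $\Phi$ has constant entries in this frame. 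Next I would compute both sides of the curvature identity. The left side: $D'\overline\partial+\overline\partial D'$ has curvature form $\overline\partial\omega+\partial\overline\omega$ (the $(1,1)$-part of the Chern curvature), and since $\omega=\diag(\partial\eta_\alpha/2\eta_\alpha)$ is holomorphic, $\overline\partial\omega=0$ and the whole thing is $\partial\overline\omega=\diag(\partial\overline\partial\log|\eta_\alpha|)$ — a diagonal $(1,1)$-form. The right side: $\Phi\wedge\Phi^\dag+\Phi^\dag\wedge\Phi$ where $\Phi^\dag=\kappa\Phi\kappa$ (as established in the proof of Theorem \ref{suff cond}); in the canonical frame $\kappa=K=\diag(|\eta_\alpha|/\eta_\alpha)$, so $\Phi^\dag_{\overline{e}_\alpha}$ also acts diagonally up to these phase factors, and the commutator-type expression $\Phi\wedge\Phi^\dag+\Phi^\dag\wedge\Phi$ produces a diagonal $(1,1)$-form whose $\alpha$-th entry is a sum over $\beta$ of terms built from the structure constants — here is where the rotation coefficients $\gamma_{\alpha\beta}$ and the quantities $\partial_{\overline{u}^\beta}(|\eta_\alpha|/\eta_\alpha)$ enter.

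The crux is matching these two diagonal $(1,1)$-forms entry by entry, i.e.\ proving $\partial\overline\partial\log|\eta_\alpha|=-\sum_\beta(\text{curvature contribution from }\Phi,\Phi^\dag)_{\alpha}$. This is exactly the content of the tt*-equation in canonical coordinates, and the hard part will be carrying out this identification honestly: it requires invoking the Darboux–Egoroff / flatness relations for the metric potential $\eta$ (the fact that $\nabla$ is flat forces precise PDEs on $\partial_\alpha\partial_\beta\eta$ and on $\gamma_{\alpha\beta}$), and it requires being careful with the $|\eta_\alpha|$ versus $\eta_\alpha$ distinction — differentiating $|\eta_\alpha|=(\eta_\alpha\overline\eta_\alpha)^{1/2}$ introduces the combination $\partial\eta_\alpha/2\eta_\alpha+\overline\partial\overline\eta_\alpha/2\overline\eta_\alpha$, and one must check the mixed $\partial\overline\partial$ term lands precisely on the $\Phi\wedge\Phi^\dag$ side. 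I expect that after substituting the explicit rotation coefficients the two sides collapse to the same expression, but this bookkeeping — rather than any conceptual difficulty — is where the real work lies. Once the diagonal entries match and the off-diagonal entries are seen to vanish on both sides (both being diagonal), \eqref{harmonic} is established, completing the verification via Theorem \ref{suff cond} that the $\kappa$ of Theorem \ref{existence} yields a CDV$\oplus$-structure.
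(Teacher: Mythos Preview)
Your plan for $D'(\Phi)=0$ is correct and matches the paper: diagonality of $\omega$ in the canonical frame, together with the constancy of the structure constants $C^{(\alpha)}$ there, forces the vanishing (the paper records this as Lemma~\ref{canoDphi}).

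For the curvature identity, however, you are making the computation far harder than it is, and one formula is wrong. First, the $(1,1)$-curvature of the Chern connection is $\overline\partial\omega$, not $\overline\partial\omega+\partial\overline\omega$: there is no $\overline\omega$ in the Chern connection, whose $(0,1)$-part is simply $\overline\partial$. Since $\omega=\diag(\partial\eta_\alpha/2\eta_\alpha)$ has holomorphic entries (the $\eta_\alpha$ being holomorphic), $\overline\partial\omega=0$ and the left side of the identity is \emph{zero}. (Even your expression $\partial\overline\partial\log|\eta_\alpha|$ vanishes by the holomorphicity of $\eta_\alpha$, so you would eventually reach the same conclusion.)

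On the right side, no rotation coefficients or Darboux--Egoroff relations are needed. In canonical coordinates $C^{(\alpha)}$ is the diagonal matrix with a single $1$ in slot $(\alpha,\alpha)$, and $\tilde C^{(\alpha)}=\overline K\,\overline{C^{(\alpha)}}\,K$ with $K=\diag(|\eta_\beta|/\eta_\beta)$ diagonal of unit modulus; hence $\tilde C^{(\alpha)}=C^{(\alpha)}$, and $[\tilde C^{(\beta)},C^{(\alpha)}]=0$ because diagonal matrices commute. So the right side is also zero, and the whole curvature identity is the tautology $0=0$. The ``hard bookkeeping'' you anticipate never materializes: the rotation coefficients $\gamma_{\alpha\beta}$ govern the Levi-Civita connection $\nabla$, not $\Phi^\dag=\kappa\Phi\kappa$, and they play no role in this claim.
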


The relation $D'(\Phi)=0$ is a consequence the following lemma.

\begin{lemma}\label{canoDphi}
Let $M$ be a semi-simple Frobenius manifold $M$ of dimension $m$
and let $h$ be a non-degenerate sesquilinear form on $M$, with
associated Chern connection $D'$. Let $(u^1,\dots,u^m)$ be a local
system of canonical coordinates on $M$ and let
$\omega=(\omega_\alpha^\beta)$ be the connection matrix of $h$ in
these coordinates. Then $D'(\Phi)=0$ is equivalent to
\begin{itemize}
\item $\omega_\alpha^\beta(e_\alpha +e_\beta)=0$,
$\forall\alpha\neq \beta$, if $m=2$, \item and, if $m\geq3$, to
\[
\begin{cases}
\omega_\alpha^\beta(e_\gamma)=0,& \forall \gamma\neq \alpha,\, \gamma\neq \beta,\,\alpha\neq \beta,\\
\omega_\alpha^\beta(e_\alpha +e_\beta)=0,& \forall\alpha\neq
\beta.
\end{cases}
\]
\end{itemize}
\end{lemma}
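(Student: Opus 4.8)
The plan is to compute $D'(\Phi)$ explicitly in the canonical frame $e_\alpha=\partial_{u^\alpha}$ and to read off the vanishing conditions. Recall first that in canonical coordinates the product is $e_\alpha\circ e_\beta=\delta_{\alpha\beta}e_\alpha$, so the Higgs field is the $\mathrm{End}$-valued $(1,0)$-form with $\Phi_{e_\alpha}(e_\beta)=-\delta_{\alpha\beta}e_\alpha$; in particular $\Phi_{e_\beta}$ is the rank-one endomorphism $e_\gamma\mapsto-\delta_{\beta\gamma}e_\beta$, and $[e_\alpha,e_\beta]=0$. Since $D'(\Phi)$ is the image of $\Phi$ under the exterior $(1,0)$-covariant derivative attached to $D'$, it is an $\mathrm{End}$-valued $(2,0)$-form, hence $C^\infty_M$-bilinear and alternating in its two vector-field slots; therefore $D'(\Phi)=0$ is equivalent to the equalities $D'_{e_\alpha}(\Phi_{e_\beta})=D'_{e_\beta}(\Phi_{e_\alpha})$ for all $\alpha\neq\beta$.

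Next I would carry out the computation. Writing $D'_{e_\alpha}e_\gamma=\sum_\delta\omega_\gamma^\delta(e_\alpha)e_\delta$ and applying the Leibniz rule on $\mathrm{End}(\mathcal{T}_M^{(1,0)})$, one gets
\begin{equation*}
(D'_{e_\alpha}\Phi_{e_\beta})(e_\gamma)=-\delta_{\beta\gamma}\sum_\delta\omega_\beta^\delta(e_\alpha)e_\delta+\omega_\gamma^\beta(e_\alpha)e_\beta .
\end{equation*}
Subtracting the same expression with $\alpha$ and $\beta$ interchanged, the condition $D'(\Phi)=0$ becomes, for every $\alpha\neq\beta$ and every $\gamma$,
\begin{equation*}
-\delta_{\beta\gamma}\sum_\delta\omega_\beta^\delta(e_\alpha)e_\delta+\omega_\gamma^\beta(e_\alpha)e_\beta+\delta_{\alpha\gamma}\sum_\delta\omega_\alpha^\delta(e_\beta)e_\delta-\omega_\gamma^\alpha(e_\beta)e_\alpha=0 .
\end{equation*}

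Finally I would split this identity according to whether $\gamma\notin\{\alpha,\beta\}$, $\gamma=\alpha$, or $\gamma=\beta$, and in each case expand the sums over $\delta$ and use the linear independence of $e_1,\dots,e_m$. When $\gamma\notin\{\alpha,\beta\}$ (which forces $m\geq3$) one obtains $\omega_\gamma^\beta(e_\alpha)=0=\omega_\gamma^\alpha(e_\beta)$; when $\gamma=\alpha$ the diagonal terms $\omega_\alpha^\alpha(e_\beta)$ cancel and what survives is $\omega_\alpha^\beta(e_\alpha+e_\beta)=0$ together with $\omega_\alpha^\delta(e_\beta)=0$ for $\delta\notin\{\alpha,\beta\}$; the case $\gamma=\beta$ gives the same with $\alpha,\beta$ interchanged. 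Collecting everything: for $m\geq3$ we get exactly $\omega_\alpha^\beta(e_\gamma)=0$ for pairwise distinct $\alpha,\beta,\gamma$ and $\omega_\alpha^\beta(e_\alpha+e_\beta)=0$ for $\alpha\neq\beta$, while for $m=2$ the first family is vacuous and only $\omega_\alpha^\beta(e_\alpha+e_\beta)=0$ remains, which is the assertion of the lemma. The only points that require care are the justification that it is enough to test $D'(\Phi)$ on the canonical frame (tensoriality and alternation) and the index bookkeeping in the case split; there is no conceptual obstacle.
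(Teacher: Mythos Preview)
Your proof is correct and follows essentially the same approach as the paper: reduce $D'(\Phi)=0$ to $D'_{e_\alpha}(\Phi_{e_\beta})=D'_{e_\beta}(\Phi_{e_\alpha})$, evaluate on $e_\gamma$, and split into the cases $\gamma\notin\{\alpha,\beta\}$ and $\gamma\in\{\alpha,\beta\}$. The only cosmetic difference is that the paper uses the case $\gamma\notin\{\alpha,\beta\}$ first to simplify the sum in the case $\gamma=\alpha$, whereas you keep the full sum and observe afterwards that the extra conditions $\omega_\alpha^\delta(e_\beta)=0$ for $\delta\notin\{\alpha,\beta\}$ are already contained in the first family; both routes yield the same result.
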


\begin{proof}[Proof of Lemma \ref{canoDphi}]
By definition $D'(\Phi)=0$ is equivalent to
\begin{equation*}
D'_{e_\alpha}(\Phi_{e_\beta})= D'_{e_\beta}(\Phi_{e_\alpha}),
\quad\forall \alpha, \beta,
\end{equation*}
that is, to
\begin{equation}\label{dphi}
D'_{e_\alpha}(\Phi_{e_\beta})(e_\gamma)=
D'_{e_\beta}(\Phi_{e_\alpha})(e_\gamma), \quad\forall \alpha,
\beta, \gamma.
\end{equation}
Assume that $\alpha,\beta,\gamma$ are pairwise distinct. Then,
because $(u^\alpha)$ are canonical, $\Phi_{e_\alpha}(e_\gamma)=0$
and $\Phi_{e_\beta}(e_\gamma)=0$,  so
\begin{eqnarray*}
D'_{e_\alpha}(\Phi_{e_\beta})(e_\gamma)-
D'_{e_\beta}(\Phi_{e_\alpha})(e_\gamma)
& = & e_\beta \circ D'_{e_\alpha} e_\gamma - e_\alpha \circ D'_{e_\beta} e_\gamma \\
& = & \omega_{\gamma}^\beta(e_\alpha) e_\beta -
\omega_{\gamma}^\alpha(e_\beta) e_\alpha,
\end{eqnarray*}
Hence, for any such $\alpha,\beta,\gamma$,
$D'_{e_\alpha}(\Phi_{e_\beta})(e_\gamma)=
D'_{e_\beta}(\Phi_{e_\alpha})(e_\gamma)$ is equivalent to
$$\omega_{\gamma}^\beta(e_\alpha)=0.$$

Assume now $\alpha\neq\beta$ and take $\gamma=\alpha$ in the
relation (\ref{dphi}). Then
\begin{eqnarray*}
D'_{e_\alpha}(\Phi_{e_\beta})(e_\alpha)-
D'_{e_\beta}(\Phi_{e_\alpha})(e_\alpha)
& = & e_\beta \circ D'_{e_\alpha} e_\alpha - e_\alpha \circ D'_{e_\beta} e_\alpha + D'_{e_\beta} e_\alpha \\
& = & \omega_{\alpha}^\beta(e_\alpha) e_\beta - \omega_{\alpha}^{\alpha}(e_\beta) e_\beta + \sum_\gamma \omega_{\alpha}^{\gamma}(e_\beta) e_\gamma \\
& = & \omega_{\alpha}^\beta(e_\alpha) e_\beta + \sum_{\gamma\neq \alpha}\omega_{\alpha}^{\gamma}(e_\beta)e_\gamma \\
& = & \omega_{\alpha}^\beta(e_\alpha) e_\beta +
\omega_{\alpha}^\beta(e_\beta) e_\beta,
\end{eqnarray*}
The last equality holds because
$\omega_{\gamma}^\beta(e_\alpha)=0$, $\forall \gamma\neq \alpha,
\gamma\neq \beta$. So $D'_{e_\alpha}(\Phi_{e_\beta})(e_\alpha)-
D'_{e_\beta}(\Phi_{e_\alpha})(e_\alpha)=0$ is equivalent to
\[
\omega_{\alpha}^\beta(e_\alpha+e_\beta)=0.\qedhere
\]
\end{proof}

We continue to prove Theorem \ref{existence}. Since $\omega$ is
diagonal, we have $D'(\Phi)=0$ according to Lemma \ref{canoDphi}.
Let us now consider the other equation in \eqref{harmonic}.

\begin{lemma}\label{harm}
Given any quadruple $(M, D^{'}, \Phi, \Phi^{\dag})$, where $M$ is
a complex analytic manifold, $D^{'}$ is a $(1, 0)$ connection on
$\mathcal{T}_M^{(1, 0)}$, $\Phi$ and $\Phi^{\dag}$ are
$\mathcal{C}_M^{\infty}$-linear maps
$$\Phi: \mathcal{T}_M^{1, 0} \rightarrow \mathcal{A}_M^{1, 0} \otimes \mathcal{T}_M^{1, 0},$$
$$\Phi^{\dag}: \mathcal{T}_M^{1, 0} \rightarrow \mathcal{A}_M^{0, 1} \otimes \mathcal{T}_M^{1, 0}.$$
Then the relation $D'\overline{\partial}+\overline{\partial} D' =
-(\Phi \wedge \Phi^{\dagger} + \Phi^{\dagger} \wedge \Phi)$ is
equivalent to
$$\overline{\partial_{z^j}}\omega(\partial_{z^i})= [\tilde{C^{(j)}}, C^{(i)}],\quad \forall i, j.$$
where $z^j$ are any holomorphic local coordinates of $M$, and
$\omega$ is the matrix of connection forms for $D'$. Moreover,
$C^{(i)}$ and $\tilde{C^{(j)}}$ are defined by relations
(\ref{eq:MPhi}) and(\ref{eq:MPhidag}).
\end{lemma}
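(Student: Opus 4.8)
The plan is to unwind the operator identity $D'\overline{\partial}+\overline{\partial}D' = -(\Phi\wedge\Phi^{\dag}+\Phi^{\dag}\wedge\Phi)$ into components in a fixed holomorphic coordinate system $z^1,\dots,z^m$, exactly as was done for the analogous identity in the proof of Lemma \ref{lemma J}. First I would note that both sides are $\mathcal{C}_M^\infty$-linear $2$-form-valued endomorphisms, so it suffices to test against pairs of vector fields $(\partial_{z^i},\overline{\partial_{z^j}})$; pairs of two $(1,0)$-fields give the relation $D'(\Phi)=0$ (already assumed separately) and pairs of two $(0,1)$-fields give $\overline{\partial}(\Phi^{\dag})=0$, so the only new content is in the mixed pairs. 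Applying the standard formula for the curvature-type expression $[D'_X,\overline{\partial}_{\overline Y}]-(D'+\overline{\partial})_{[X,\overline Y]}$ with $X=\partial_{z^i}$, $\overline Y=\overline{\partial_{z^j}}$, and using $[\partial_{z^i},\overline{\partial_{z^j}}]=0$, the left-hand side collapses to the commutator $[D'_{\partial_{z^i}},\overline{\partial_{\overline{z^j}}}]$ acting on sections of $\mathcal{T}_M^{(1,0)}$.

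Next I would compute each side as a matrix. Since $D'$ has connection matrix $\omega$, we have $D'_{\partial_{z^i}} = \partial_{z^i} + \omega(\partial_{z^i})$ acting on column vectors of components, while $\overline{\partial_{\overline{z^j}}}$ is just $\overline{\partial_{\overline{z^j}}}$ on components (the $(0,1)$-part of $D$ coincides with $\overline\partial$ in holomorphic coordinates for the Chern connection, but here we only need that $\overline\partial$ has trivial connection matrix in these coordinates). The commutator $[D'_{\partial_{z^i}},\overline{\partial_{\overline{z^j}}}]$ then equals multiplication by $-\,\overline{\partial_{\overline{z^j}}}\bigl(\omega(\partial_{z^i})\bigr)$, because the derivative terms commute and the $\omega$-term contributes $-\overline{\partial_{\overline{z^j}}}\omega(\partial_{z^i})$ with a sign from moving $\overline{\partial_{\overline{z^j}}}$ past it. On the right-hand side, $-(\Phi\wedge\Phi^{\dag}+\Phi^{\dag}\wedge\Phi)(\partial_{z^i},\overline{\partial_{z^j}})$ evaluates to $-[\Phi_{\partial_{z^i}},\Phi^{\dag}_{\overline{\partial_{z^j}}}]$ by the usual wedge-of-endomorphisms convention, which in the matrix notation of \eqref{eq:MPhi}–\eqref{eq:MPhidag} is $-[C^{(i)},\tilde{C^{(j)}}] = [\tilde{C^{(j)}},C^{(i)}]$. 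Matching the two sides and cancelling the overall signs gives exactly $\overline{\partial_{\overline{z^j}}}\omega(\partial_{z^i}) = [\tilde{C^{(j)}},C^{(i)}]$ for all $i,j$, and conversely this family of scalar-matrix identities reassembles into the operator identity.

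The main obstacle I anticipate is purely bookkeeping: getting the signs and the wedge conventions consistent. Specifically, one must fix once and for all the convention for $(\Phi\wedge\Phi^{\dag})(X,\overline Y)$ — whether it is $\Phi_X\Phi^{\dag}_{\overline Y}$, or $\Phi_X\Phi^{\dag}_{\overline Y} - \Phi^{\dag}_{\overline Y}\Phi_X$, or carries a factor — and make sure it matches the convention implicit in the definition of a $DC\tilde C$-structure via \eqref{tt2}; the paper's own computation in Lemma \ref{lemma J} (the passage from $(D'\overline\partial+\overline\partial D')(X,\overline Y)$ to $[D'_X,\overline{\partial}_{\overline Y}]-(D'+\overline\partial)_{[X,\overline Y]}=-[\Phi_X,\Phi^{\dag}_{\overline Y}]$) pins this down, so I would simply cite that normalization rather than rederive it. The remaining subtlety is to justify the reduction to coordinate vector fields: the identity $[D'_X,\overline\partial_{\overline Y}]-(D'+\overline\partial)_{[X,\overline Y]}=-[\Phi_X,\Phi^{\dag}_{\overline Y}]$ is tensorial in $X$ and in $\overline Y$ (the Lie-bracket correction term is precisely what makes the left side $\mathcal{C}^\infty$-bilinear), so it is enough to verify it on $X=\partial_{z^i}$ and $\overline Y=\overline{\partial_{z^j}}$, where $[\partial_{z^i},\overline{\partial_{z^j}}]=0$ kills the correction term — this last point is the only place where the choice of holomorphic coordinates is used and should be stated explicitly.
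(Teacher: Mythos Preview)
Your proposal is correct and is precisely the ``straightforward computation'' that constitutes the paper's entire proof of this lemma. One small correction: testing the identity on a pair of two $(1,0)$-fields (or two $(0,1)$-fields) yields $0=0$ automatically rather than $D'(\Phi)=0$, since both $D'\overline\partial+\overline\partial D'$ and $\Phi\wedge\Phi^{\dag}+\Phi^{\dag}\wedge\Phi$ are pure $(1,1)$-form-valued; this does not affect your argument, as you rightly reduce to the mixed pairs $(\partial_{z^i},\overline{\partial_{z^j}})$.
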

\begin{proof}
Straightforward computation.
\end{proof}
The quadruple $(M, D^{'}, \Phi, \Phi^{\dag})$ we defined above
satisfies the assumption of Lemma \ref{harm}, so by Lemma
\ref{harm} applied with canonical coordinates, we are reduced to
proving
\begin{equation}\label{overlined}
\overline{\partial_\beta}
\omega(\partial_\alpha)=[\tilde{C^{(\beta)}}, C^{(\alpha)}],
\quad\forall \alpha, \beta,
\end{equation}
where the matrices $C^{(\alpha)}$ and $\tilde{C^{(\beta)}}$ are
defined by (\ref{eq:MPhi}) and (\ref{eq:MPhidag}) in canonical
local coordinates.

Firstly, we will compute the right hand side of (\ref{overlined}).
Obviously, in a system of canonical local coordinates of the
Frobenius manifold, the matrices $C^{(\alpha)}$ satisfy
\begin{equation}\label{eq:canonical}
\begin{cases}
{C^{(\alpha)}}_{\beta}^\gamma=1,& \text{if }\gamma=\alpha=\beta,\\
{C^{(\alpha)}}_{\beta}^\gamma=0,& \text{otherwise}.
\end{cases}
\end{equation}
Now we will compute $\tilde{C^{(\beta)}}$.
\begin{eqnarray*}
\tilde{C^{(\beta)}}
& = & \overline{K} \cdot \overline{C^{(\beta)}} \cdot K \\
& = & \diag\Big(0,\dots, 0,
\frac{|\eta_\beta|}{\overline{\eta_\beta}}
\cdot \frac{|\eta_\beta|}{\eta_\beta},\dots, 0,\dots, 0\Big) \\
& = & \diag(0,\dots, 0, 1, 0,\dots, 0) \\
& = & C^{(\beta)}.
\end{eqnarray*}
Therefore,
\begin{equation*}
[\tilde{C^{(\beta)}}, C^{(\alpha)}]=0,\quad \forall \alpha, \beta.
\end{equation*}
Now we just need to check
$$\overline{e_\beta} \omega(e_\alpha)=0,\quad \forall \alpha, \beta.$$
Since all $\eta_\alpha = g_{\alpha \alpha}$ are nonzero
holomorphic functions, we get that all $\omega_{\alpha}^{\alpha} =
e_\alpha \eta_\alpha/2\eta_\alpha$ are holomorphic. Hence
\begin{equation*}
\overline{e_\beta} \omega_{\gamma}^{\gamma}(e_\alpha) =0,\quad
\forall \alpha, \beta, \gamma.
\end{equation*}
This finishes the proof of the claim. The relation
$\mathcal{Q}^{\dag} = \mathcal{Q}$ is implied by the relation
$\mathcal{Q}:=D'_{\mathcal E}-\mathcal L_{\mathcal
E}-\frac{2-d}2\Id=0$, that we now prove.

We again use a system of canonical local coordinates $u^{\alpha}$.
We can normalize it in such a way that
\begin{equation*}
\mathcal{E}=\sum_\alpha u^\alpha e_\alpha.
\end{equation*}
Therefore,
\begin{equation}\label{LE}
\mathcal{L}_{\mathcal{E}} e_\alpha = - e_\alpha,\quad \forall
\alpha.
\end{equation}
Let us now recall:

\begin{lemma}[\cite{Mani}: Theorem 3.6, p.31]\label{dg}
Let $(M, \circ, g, e,\mathcal{E})$ be a semi-simple Frobenius
manifold and let $u^1, u^2,\dots, u^m$ be a system of canonical
local coordinates of $M$ such that $\mathcal{E}= \sum_{\alpha}
u^\alpha e_\alpha$. Then
\begin{equation*}
\mathcal{L}_{\mathcal{E}}(g)= (2-d)g \Leftrightarrow
\mathcal{L}_{\mathcal{E}} g(\partial_{u^\alpha},
\partial_{u^\alpha})= (-d)\cdot g(\partial_{u^\alpha}, \partial_{u^\alpha}), \quad\forall \alpha.
\end{equation*}
\end{lemma}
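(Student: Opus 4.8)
The plan is to use the fact that in a system of canonical coordinates the metric $g$ is diagonal, so that the tensorial identity $\mathcal{L}_{\mathcal{E}}(g)=(2-d)g$ has non-trivial content only on the diagonal. First I would recall the standard fact for semi-simple Frobenius manifolds (immediate from the Frobenius axiom $g(X\circ Y,Z)=g(X,Y\circ Z)$ together with $e_\alpha\circ e_\beta=\delta_{\alpha\beta}e_\alpha$) that $g(e_\alpha,e_\beta)=0$ whenever $\alpha\neq\beta$, and set $\eta_\alpha:=g(e_\alpha,e_\alpha)$.

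Then I would expand the Lie derivative componentwise by the Leibniz rule
\[
\mathcal{L}_{\mathcal{E}}(g)(e_\alpha,e_\beta)=\mathcal{E}\big(g(e_\alpha,e_\beta)\big)-g(\mathcal{L}_{\mathcal{E}}e_\alpha,e_\beta)-g(e_\alpha,\mathcal{L}_{\mathcal{E}}e_\beta),
\]
substituting $\mathcal{L}_{\mathcal{E}}e_\alpha=-e_\alpha$ from \eqref{LE}. For $\alpha\neq\beta$ each summand vanishes because $g(e_\alpha,e_\beta)=0$, so $\mathcal{L}_{\mathcal{E}}(g)(e_\alpha,e_\beta)=0=(2-d)g(e_\alpha,e_\beta)$ holds identically and the off-diagonal entries impose nothing. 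For $\alpha=\beta$ the same formula gives $\mathcal{L}_{\mathcal{E}}(g)(e_\alpha,e_\alpha)=\mathcal{E}(\eta_\alpha)+2\eta_\alpha$.

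Comparing with $(2-d)\eta_\alpha$, the $(\alpha,\alpha)$-component of $\mathcal{L}_{\mathcal{E}}(g)=(2-d)g$ is then equivalent to $\mathcal{E}(\eta_\alpha)=-d\,\eta_\alpha$, i.e.\ to $\mathcal{L}_{\mathcal{E}}g(\partial_{u^\alpha},\partial_{u^\alpha})=(-d)\,g(\partial_{u^\alpha},\partial_{u^\alpha})$; since $\{e_\alpha\}$ is a frame and the off-diagonal entries are automatic, this yields the asserted equivalence in both directions. There is no serious obstacle here: the only points needing care are quoting that $g$ is diagonal in canonical coordinates (which may simply be cited, e.g.\ from \cite{Mani}) and the sign bookkeeping in $\mathcal{L}_{\mathcal{E}}e_\alpha=-e_\alpha$, already recorded in \eqref{LE}; one should also note that $\mathcal{L}_{\mathcal{E}}g(\partial_{u^\alpha},\partial_{u^\alpha})$ is to be read as $\mathcal{E}$ acting on the function $\eta_\alpha$, which accounts for the shift from $2-d$ to $-d$ relative to the action on the vector fields.
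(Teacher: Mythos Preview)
Your proof is correct. The paper does not actually prove this lemma: it is stated with a citation to \cite{Mani} (Theorem~3.6, p.~31) and used as a black box, so there is no in-paper argument to compare against. Your direct computation---using that $g$ is diagonal in canonical coordinates and that $\mathcal{L}_{\mathcal{E}}e_\alpha=-e_\alpha$ from \eqref{LE}---is exactly the standard one, and your remark that $\mathcal{L}_{\mathcal{E}}g(\partial_{u^\alpha},\partial_{u^\alpha})$ must be read as $\mathcal{E}$ acting on the scalar $\eta_\alpha$ is well taken and consistent with how the paper immediately applies the lemma (deducing $\mathcal{E}\eta_\alpha=(-d)\eta_\alpha$).
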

Since the relation $\mathcal{L}_{\mathcal{E}}(g)= (2-d)g$ is
included in the definition of a Frobenius manifold, by lemma
\ref{dg}, we have
$$\mathcal{E} \eta_{\alpha}= (-d)\cdot \eta_{\alpha},\quad \forall \alpha,$$
so we have,
\begin{eqnarray*}
D'_{\mathcal{E}} e_\alpha
& = & \omega_{\alpha}^{\alpha}(\mathcal{E})\cdot e_\alpha \\
& = & \frac{\mathcal{E}\eta_{\alpha}}{2 \eta_{\alpha}} \cdot e_\alpha \\
& = & \frac{-d}{2} \cdot e_\alpha
\end{eqnarray*}
Hence, for all $\alpha$, we get
\begin{eqnarray*}
\mathcal{Q} e_\alpha & = & D'_{\mathcal{E}} e_\alpha -
\mathcal{L}_{\mathcal{E}} e_{\alpha} - \frac{2-d}{2} \cdot \Id \\
& = & \frac{-d}{2} \cdot e_\alpha + e_\alpha - \frac{2-d}{2} \cdot \Id \\
& = & 0.
\end{eqnarray*}

By Theorem \ref{suff cond}, we conclude that $(M, g, \circ, e,
\mathcal{E}, \kappa)$ is a CDV$\oplus$-struc\-ture on $M$ with
$\mathcal{Q}=0$.\qed

\begin{remark}
For dimension two, under the assumptions of Theorem
\ref{existence}, we get a CDV$\oplus$-structure on $M$ which is
contained in the discussion of \cite{TA}.
\end{remark}

\subsection{Comparison of three connections and non-K\"ahler property}\label{subsection2c}

In the proof of theorem \ref{flat k}, we will use a system of
canonical local coordinates $u^1, u^2,\dots, u^m$. We normalize
the canonical local coordinates $u^{1}, u^{2},\dots, u^{m}$ in
such a way that $\mathcal{E}= \sum_{\alpha}u^{\alpha}e_{\alpha}$.

\begin{proof}[Proof of theorem \ref{flat k}]
We assume that $\kappa$ is $\nabla$-flat.

\begin{claim*}\label{claim1}
The matrix of connection forms $\omega_{\alpha}^\beta$ for $D^{'}$
is diagonal in $(u^\alpha)$.
\end{claim*}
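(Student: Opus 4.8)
The plan is to exploit the fact that $\kappa$ is $\nabla$-flat to pin down its matrix in canonical coordinates, and then transport this to information about the Chern connection $D'$ of $h=g(\cdot,\kappa\cdot)$. First I would recall that, since $\kappa$ is $\nabla$-flat and $\nabla$ is torsion-free and flat, we may pass to $\nabla$-flat holomorphic local coordinates $t^i$ in which $\kappa$ has a constant matrix $K$. On the other hand, in canonical coordinates $(u^\alpha)$ the product $\circ$ is diagonal and the $\nabla$-Christoffel symbols are governed by the rotation coefficients; the key structural input is that $\kappa$ must be compatible with the eigenvalue decomposition of $\mathcal{U}=\mathcal{E}\circ$ forced by the CV-structure. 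Concretely, relation \eqref{CU} gives $[C,\mathcal{U}]=0$ and relation \eqref{UU} gives $\mathcal{U}^\dagger=\kappa\mathcal{U}\kappa$; combining these with $\Phi^\dagger=\kappa\Phi\kappa$ (established in \S\ref{subsection2a}) shows that $\kappa$ preserves the simultaneous eigenspaces of the $C_{e_\alpha}$, which are exactly the lines $\mathbb{C}e_\alpha$. Hence $\kappa\partial_{u^\alpha}=K_{\alpha\alpha}\partial_{u^\alpha}$ for some functions $K_{\alpha\alpha}$, i.e. the matrix of $\kappa$ is diagonal in $(u^\alpha)$.

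Once $\kappa$ is diagonal in canonical coordinates, I would compute the matrix $h_{\alpha\beta}=g(\partial_{u^\alpha},\kappa\partial_{u^\beta})$. Since $g$ is diagonal in canonical coordinates, $g(\partial_{u^\alpha},\partial_{u^\beta})=\delta_{\alpha\beta}\eta_\alpha$, and the diagonality of $\kappa$ immediately yields that $(h_{\alpha\beta})$ is diagonal as well, with $h_{\alpha\alpha}=K_{\alpha\alpha}\eta_\alpha$. The connection forms of the Chern connection in these coordinates are $\omega_\alpha^\beta=\sum_\gamma\partial h_{\alpha\gamma}\cdot h^{\gamma\beta}$; plugging in a diagonal $h$ gives $\omega_\alpha^\beta=\partial h_{\alpha\alpha}\cdot h^{\alpha\alpha}\,\delta_{\alpha\beta}$, so the connection matrix $\omega=(\omega_\alpha^\beta)$ is diagonal, which is precisely the Claim.

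The main obstacle is the first step: justifying that $\kappa$ is forced to be diagonal in canonical coordinates. Here I cannot simply invoke that $\kappa$ is $\nabla$-flat in isolation, because $\nabla$-flatness is stated in flat coordinates $t^i$, not canonical ones; the argument must go through the algebraic compatibility of $\kappa$ with the multiplication. The cleanest route I anticipate is: from $D(\kappa)=0$ and $D'=\nabla$ (which holds by part \ref{flat k}\eqref{flat k1} under the running hypothesis), together with $\nabla(\circ)$ behaving well and $\Phi^\dagger=\kappa\Phi\kappa$, deduce that $\kappa$ intertwines $C_X$ with $\tilde C_{\overline X}=\kappa C_X\kappa$ in a way that forces $\kappa$ to stabilize each canonical eigenline. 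One must be careful that the canonical coordinates are only defined up to permutation and that $\kappa$ could a priori permute the eigenlines; this is ruled out because $\kappa$ is anti-linear and an involution fixing a real form, so it must preserve rather than permute the (distinct, by semi-simplicity) eigenvalue data of $\mathcal{U}$. After this step, everything else is a short direct computation with diagonal matrices.
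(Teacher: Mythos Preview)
Your proposal has a genuine gap in the crucial first step. The relations you invoke --- $[C,\mathcal U]=0$, $\mathcal U^{\dagger}=\kappa\mathcal U\kappa$, and $\Phi^{\dagger}=\kappa\Phi\kappa$ --- hold in \emph{every} CDV-structure, with or without the hypothesis $D'=\nabla$. If they sufficed to make $\kappa$ diagonal in canonical coordinates, then $\kappa$ would be diagonal in canonical coordinates for an arbitrary semi-simple CDV-structure, which is not what is being claimed and would trivialize much of the surrounding discussion. The identity $\tilde C_{\overline X}=\kappa C_X\kappa$ only says that conjugation by $\kappa$ sends $\Phi_{e_\alpha}$ to $\Phi^{\dagger}_{\overline{e_\alpha}}$; it does \emph{not} say that $\kappa$ commutes with $\Phi_{e_\alpha}$, so it gives no direct reason for $\kappa$ to preserve the canonical eigenlines. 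What is actually needed, and what you do not use, is the tt*-equation \eqref{harmonic}: under $D'=\nabla$ the curvature term $D'\overline\partial+\overline\partial D'$ vanishes, hence $[\Phi_{e_\alpha},\Phi^{\dagger}_{\overline{e_\beta}}]=0$ for all $\alpha,\beta$. This is the mechanism the paper exploits, translating it into $\overline{K_{\alpha\beta}}K_{\beta\gamma}=0$ for $\alpha\neq\gamma$.

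Your second step is also too quick. Even granting that $\kappa$ sends each canonical eigenline to a canonical eigenline, your reason for excluding a permutation --- that $\kappa$ ``must preserve rather than permute the eigenvalue data of $\mathcal U$'' --- does not hold: $\kappa$ conjugates $\mathcal U$ to $\mathcal U^{\dagger}$, not to $\mathcal U$, so $\kappa e_\alpha$ lies in the $\overline{u^\alpha}$-eigenspace of $\mathcal U^{\dagger}$, which tells you nothing about which $\mathcal U$-eigenline it is. The paper in fact only proves that $K$ is monomial, i.e.\ for each $\alpha$ there is a unique $\nu_\alpha$ with $K_{\alpha\nu_\alpha}\neq0$ and $\nu_{\nu_\alpha}=\alpha$; the permutation need not be the identity at this stage. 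The diagonality of $\omega$ then follows from $\omega=-\overline K\cdot\partial K$ (a consequence of $D(\kappa)=0$) together with this monomial structure: the only surviving term in $\omega_\alpha^\beta=-\sum_\gamma\overline{K_{\alpha\gamma}}\,\partial K_{\gamma\beta}$ has $\gamma=\nu_\alpha$ and forces $\beta=\nu_{\nu_\alpha}=\alpha$. The stronger statement $\nu_\alpha=\alpha$ (so that $K$ and $h$ are honestly diagonal) is obtained only later, under the additional positivity assumption on $h$.
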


\begin{proof}
Since $D' = \nabla$, \eqref{harmonic} implies
$$(\Phi
\wedge \Phi^{\dagger} + \Phi^{\dagger} \wedge \Phi)= - (
D'\overline{\partial}+\overline{\partial} D') = - ( \nabla
\overline{\partial}+\overline{\partial} \nabla) =0.$$ By a
straightforward computation, we get
\begin{equation}\label{eq:CC}
[\tilde{C^{(\beta)}}, C^{(\alpha)}]=0,\quad \forall \alpha, \beta.
\end{equation}
Using \eqref{eq:canonical}, \eqref{eq:CC} implies
\begin{equation*}
\overline{K_{\nu \beta}} \cdot K_{\beta \alpha} \cdot
\delta_{\gamma \alpha} = \overline{K_{\alpha \beta}} \cdot
K_{\beta \gamma} \cdot \delta_{\nu \alpha}, \quad\forall \alpha,
\beta, \gamma, \nu.
\end{equation*}
Taking $\nu = \alpha, \gamma \neq \alpha$, we get
\begin{equation}\label{eq:Kabg}
\overline{K_{\alpha \beta}} \cdot K_{\beta \gamma} = 0,
\quad\forall \alpha,\beta,\gamma\text{ with } \alpha \neq \gamma.
\end{equation}
By the non-degeneracy of $\kappa$, for any $\beta$ there exists an
index $\mu_\beta$ such that $K_{{\mu_\beta}\beta}\neq 0$. From
relation \eqref{eq:Kabg}, we have
$$\overline{K_{\mu_{\beta} \beta}} \cdot K_{\beta \gamma} = 0,\quad \forall \beta,\gamma \text{ with }\gamma\neq\mu_\beta,$$
Hence we get
$$K_{\beta \gamma}=0,\quad\forall \gamma\neq\mu_\beta.$$
That is, for any $\beta$, there exists a unique $\mu_\beta$, such
that $K_{\beta{\mu_\beta}}\neq 0$. Similarly, for any $\gamma$,
there exists unique $\nu_\gamma$, such that
$K_{{\nu_\gamma}{\gamma}}\neq 0$.

Assume that there exist $\alpha$ such that $\nu_\alpha \neq
\mu_\alpha$. We will deduce a contradiction. By relation
\eqref{eq:Kabg}, we get
$$\overline{K_{\nu_{\alpha} \alpha}} \cdot K_{\alpha \mu_{\alpha}}=0.$$
Hence we get that either $K_{\nu_{\alpha} \alpha}=0$ or $K_{\alpha
\mu_{\alpha}}=0$. This gives a contradiction.

So we conclude that for any $\alpha$, there exists a unique
$\nu_\alpha$ such that
\begin{equation}\label{eq:Kaba}
\overline{K_{\alpha \nu_{\alpha}}} \cdot K_{\nu_{\alpha} \alpha}
\neq 0.
\end{equation}
Since the relation \eqref{flatk} is included in the definition of
a CDV-structure, by straightforward computations, we have
\begin{equation}\label{eq:omegaK}
\omega  =- \overline{K}\cdot \partial K.
\end{equation}
By relations \eqref{eq:Kabg}, \eqref{eq:Kaba} and
\eqref{eq:omegaK}, we get
\begin{equation}\label{eq:diag omeg}
\omega_{\alpha}^{\beta}= - \delta_{\alpha \beta} \cdot
\overline{K_{\alpha \nu_{\alpha}}} \cdot \partial K_{\nu_{\alpha}
\alpha}, \forall \alpha, \beta.
\end{equation}
Hence we conclude that
\[
\omega = \diag(- \overline{K_{1 \nu_{1}}} \cdot \partial
K_{\nu_{1} 1},\cdots, - \overline{K_{m \nu_{m}}} \cdot \partial
K_{\nu_{m} m}).\qedhere
\]
\end{proof}

Let $\eta$ be the metric potential in the coordinates $u^\alpha$,
i.e.,
\begin{equation*}
\eta_{\alpha} = g_{\alpha \alpha} = e_{\alpha} \eta.
\end{equation*}
We want to prove that $e_{1}, e_{2},\dots, e_{m}$ are
$\nabla$-flat, so we just need to prove all $\eta_{\alpha}$ are
constants, i.e.
\begin{equation*}
\overline{e_{\alpha}} \eta_{\beta} = e_{\alpha}
\eta_{\beta}=0,\quad \forall \alpha, \beta.
\end{equation*}

\begin{claim*}\label{claim2}
$e_{\alpha} \eta_{\beta}=0$, $\forall \alpha, \beta$.
\end{claim*}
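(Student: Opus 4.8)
The plan is to establish the Claim $e_\alpha\eta_\beta=0$ for all $\alpha,\beta$ by combining two pieces of information already extracted: first, the diagonality of $\omega$ in canonical coordinates (proved in Claim~1 above under the hypothesis that $\kappa$ is $\nabla$-flat), which pins down $\omega_\alpha^\alpha$ in terms of the metric potential; and second, the consequence $D'=\nabla$ of part~\eqref{flat k1}, which forces the Chern connection to be torsion-free. The contradiction between a nonzero $e_\alpha\eta_\beta$ and these constraints is what drives the argument.

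First I would record that, since $D'=\nabla$ and $\nabla$ is the Levi-Civita connection of $g$, the connection matrix $\omega$ of $D'$ in the canonical coordinates $u^\alpha$ is simultaneously (a) diagonal, by the Claim~1 just proved, and (b) equal to the Christoffel matrix of $g$ in these coordinates. For a diagonal metric $g=\diag(\eta_1,\dots,\eta_m)$ the Levi-Civita connection is well known (see e.g.\ the computation of $\nabla_{e_\alpha}e_\beta$ in Manin or Dubrovin): one has $\omega_\alpha^\beta(e_\gamma)$ built out of the derivatives $e_\gamma\eta_\alpha$, with off-diagonal entries
\[
\omega_\alpha^\beta(e_\gamma)=\tfrac12\,\delta_{\beta\gamma}\,\eta_\beta^{-1}\,e_\alpha\eta_\beta\qquad(\alpha\neq\beta),
\]
plus the terms with $\gamma=\alpha$. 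Forcing all off-diagonal $\omega_\alpha^\beta$ to vanish therefore already gives $e_\alpha\eta_\beta=0$ for $\alpha\neq\beta$, together with the remaining identity $e_\beta\eta_\alpha=e_\alpha\eta_\beta$ when $\gamma=\alpha$: but in the semi-simple case that second family of relations is exactly the flatness condition for the connection, and combined with the first it collapses the Christoffel symbols entirely. So the only surviving derivatives are $e_\alpha\eta_\alpha$, and these are controlled by $\omega_\alpha^\alpha(e_\alpha)=\tfrac12\,e_\alpha\eta_\alpha/\eta_\alpha$.

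Next I would kill the diagonal derivatives $e_\alpha\eta_\alpha$. Here I invoke the $\mathcal Q$-computation pattern used in the proof of Theorem~\ref{existence}: after normalizing $\mathcal E=\sum_\alpha u^\alpha e_\alpha$ one has $\mathcal L_{\mathcal E}e_\alpha=-e_\alpha$, and Lemma~\ref{dg} turns $\mathcal L_{\mathcal E}(g)=(2-d)g$ into $\mathcal E\eta_\alpha=(-d)\eta_\alpha$, hence $D'_{\mathcal E}e_\alpha=\tfrac{-d}{2}e_\alpha$ and $\mathcal Q e_\alpha=0$; so $\mathcal Q=0$. Now use the flatness of $\kappa$ once more: since $\omega=-\overline K\cdot\partial K$ is diagonal and $\mathcal Q$, expressed through $D'_{\mathcal E}-\mathcal L_{\mathcal E}-\tfrac{2-d}2\Id$, vanishes, the residual freedom in the $K_{\alpha\nu_\alpha}$ forces $\partial K_{\nu_\alpha\alpha}=0$, i.e.\ the nonzero entries of $K$ are holomorphic; feeding this back into $\overline K\cdot K=\Id$ shows $|K_{\nu_\alpha\alpha}|$ is constant, so $K_{\nu_\alpha\alpha}$ has constant modulus and is holomorphic, whence $\omega_\alpha^\alpha=-\overline{K_{\alpha\nu_\alpha}}\partial K_{\nu_\alpha\alpha}$ is purely imaginary-type — but $\omega_\alpha^\alpha=\tfrac12\,\partial\eta_\alpha/\eta_\alpha$ is the $(1,0)$-part of $d\log\eta_\alpha$, and matching shows $\partial\eta_\alpha$ is both holomorphic and of this constrained form, forcing $e_\alpha\eta_\alpha=0$. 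Combined with the previous step, all $e_\alpha\eta_\beta=0$, and by reality of $\eta$ also all $\overline{e_\alpha}\eta_\beta=0$, proving the Claim.

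\textbf{The main obstacle} I anticipate is the bookkeeping around the index $\nu_\alpha$: one must be careful that the permutation $\alpha\mapsto\nu_\alpha$ arising from the near-diagonal structure of $K$ is an involution (it is, by the $\mu_\beta=\nu_\beta$ argument already given) and that the holomorphy and unit-modulus conclusions for $K_{\nu_\alpha\alpha}$ are genuinely forced rather than merely consistent — this is where $D'=\nabla$ (holomorphy of $\omega$) and $\mathcal Q=0$ must be used in tandem. Once $e_\alpha\eta_\beta=0$ is in hand, the stated consequences (canonical coordinates are $\nabla$-flat, $\mathcal Q=0$, triviality via the criterion Lemma preceding Theorem~\ref{flat k}, and the explicit form of $h$ and $\kappa$ in the positive case matching Theorem~\ref{existence}) follow formally.
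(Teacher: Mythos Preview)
Your off-diagonal argument is essentially the paper's: write $D'=\nabla$ in canonical coordinates, use the explicit Christoffel formulas (Lemma~\ref{last}) for $\nabla_{e_\alpha}e_\beta$, and compare with the diagonality of $\omega$ to force $e_\alpha\eta_\beta=0$ for $\alpha\neq\beta$. That part is fine.

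The diagonal case, however, has a real gap. You derive $\mathcal Q=0$ correctly (once the off-diagonal derivatives vanish, $\omega_\alpha^\alpha$ collapses to $\tfrac12\,\partial\eta_\alpha/\eta_\alpha$ and the computation from Theorem~\ref{existence} goes through). But $\mathcal Q=0$ only tells you $\omega_\alpha^\alpha(\mathcal E)=-d/2$, i.e.\ $u^\alpha e_\alpha\eta_\alpha=-d\,\eta_\alpha$; this gives $\eta_\alpha=c_\alpha(u^\alpha)^{-d}$, not $e_\alpha\eta_\alpha=0$. Your subsequent assertion that ``the residual freedom in the $K_{\alpha\nu_\alpha}$ forces $\partial K_{\nu_\alpha\alpha}=0$'' is not justified: nothing in $\mathcal Q=0$ or in $\overline K\cdot K=\Id$ pins down $\partial K$ beyond what $\omega$ already encodes, and the ``purely imaginary-type'' versus ``$(1,0)$-part of $d\log\eta_\alpha$'' matching you sketch does not force vanishing (a harmonic phase $\theta_\alpha$ can produce a nonzero holomorphic $\omega_\alpha^\alpha$).

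The paper closes the diagonal case with a much simpler observation you have overlooked: the unit field $e=\sum_\alpha e_\alpha$ is $\nabla$-flat. Once you know $\nabla_{e_\alpha}e_\beta=0$ for $\alpha\neq\beta$ and $\nabla_{e_\alpha}e_\alpha=\tfrac12\,\eta_\alpha^{-1}(e_\alpha\eta_\alpha)\,e_\alpha$ (both consequences of the off-diagonal step), the identity $0=\nabla_{e_\alpha}e=\nabla_{e_\alpha}e_\alpha$ immediately yields $e_\alpha\eta_\alpha=0$. Replace your second paragraph with this one-line use of $\nabla e=0$ and the claim is complete.
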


\begin{proof}
We have shown that $\omega$ is diagonal, hence
\begin{equation*}
D'_{e_{\alpha}}e_{\beta} = \omega_{\beta}^{\beta}(e_{\alpha})
\cdot e_{\beta}.
\end{equation*}
Let us recall:

\begin{lemma}[\cite{Mani}, proof of Theorem 3.3, p.\ 28-30]\label{last}
Let $(M, g, \circ, e, \mathcal{E})$ be a semi-simple Frobenius
manifold, and let $u^{1}, u^{2},\dots, u^{m}$ be a system of
canonical local coordinates of $M$. Then
\begin{equation*}
\nabla_{\alpha}
e_{\alpha}=\frac{1}{2}\eta_{\alpha}^{-1}e_{\alpha}\eta_{\alpha}\cdot
e_{\alpha}-\sum_{\gamma \neq \alpha}\frac{1}{2}
\eta_{\gamma}^{-1}e_{\gamma}\eta_{\alpha}\cdot e_{\gamma}.
\end{equation*}
For $\alpha \neq \beta$ we have
\begin{equation*}
\nabla_{\alpha}
e_{\beta}=\frac{1}{2}\eta_{\alpha}^{-1}e_{\beta}\eta_{\alpha}\cdot
e_{\alpha}+ \frac{1}{2}
\eta_{\beta}^{-1}e_{\alpha}\eta_{\beta}\cdot e_{\beta}.
\end{equation*}
\end{lemma}

By Lemma \ref{last}, for $\alpha\neq \beta$, we have
\begin{equation*}
\nabla_{\alpha} e_{\beta}=\frac{1}{2} \cdot \eta_{\alpha}^{-1}
\cdot (e_{\beta}\eta_{\alpha})\cdot e_{\alpha}+ \frac{1}{2} \cdot
\eta_{\beta}^{-1} \cdot (e_{\alpha}\eta_{\beta}) \cdot e_{\beta}.
\end{equation*}
From $D' = \nabla$ we get
\begin{equation*}
\omega_{\beta}^{\beta}(e_\alpha) \cdot e_{\beta} = \frac{1}{2}
\cdot \eta_{\alpha}^{-1} \cdot (e_{\beta}\eta_{\alpha})\cdot
e_{\alpha}+ \frac{1}{2} \cdot \eta_{\beta}^{-1} \cdot
(e_{\alpha}\eta_{\beta}) \cdot e_{\beta},\quad \forall \alpha \neq
\beta.
\end{equation*}
Comparing the coefficients of $e_{\beta}$ gives
\begin{equation}\label{ij}
e_{\alpha} \eta_{\beta} =0,\quad \forall \alpha \neq \beta.
\end{equation}

Lemma \ref{last} and (\ref{ij}) give
\begin{equation}\label{eq:aa}
\nabla_{\alpha} e_{\alpha}=\frac{1}{2} \cdot \eta_{\alpha}^{-1}
\cdot (e_{\alpha}\eta_{\alpha})\cdot e_{\alpha}
\end{equation}
and
\begin{equation}\label{eq:ab}
\nabla_{\alpha} e_{\beta} = 0,\quad \forall \alpha \neq \beta.
\end{equation}
Recall also that the unit $e=\sum e_\alpha$ is $\nabla$-flat, i.e.
\begin{equation}\label{eq:ae}
\nabla_{e_{\alpha}} e = 0,\quad \forall \alpha.
\end{equation}
By relations \eqref{eq:aa}, \eqref{eq:ab} and \eqref{eq:ae}, we
get
\begin{equation*}
\eta_{\alpha}^{-1} e_{\alpha} \eta_{\alpha} \cdot e_{\alpha} = 2
\cdot \nabla_{e_{\alpha}} e_{\alpha} = \nabla_{e_{\alpha}} e = 0,
\end{equation*}
i.e.
\begin{equation}\label{ii}
e_{\alpha} \eta_{\alpha} = 0,\quad \forall \alpha.\qedhere
\end{equation}
\end{proof}

From the claim, we get
$$\partial\eta_{\alpha} = 0,\quad \forall \alpha.$$ Since all $\eta_\alpha$ are holomorphic, we get $\overline{\partial} \eta_\alpha=0,\quad \forall
\alpha$, i.e.
\begin{equation}\label{constantg}
d g_{ \alpha \alpha}= d \eta_{\alpha}=0,\quad \forall \alpha.
\end{equation}
Hence we conclude that $u^1, u^2,\dots,u^m$ are $\nabla$-flat
holomorphic local coordinates of $M$. Arguing now as in Theorem
\ref{existence}, we conclude that $\mathcal Q=0$.

Moreover, if $h$ is positive, since $h_{\alpha \alpha}= K_{\alpha
\alpha} \cdot \eta_{\alpha}> 0$, we get
$$\nu_{\alpha}= \alpha,\quad \forall \alpha.$$
So we conclude that $$h_{\alpha \beta } = \delta_{\alpha
\beta}\cdot |\eta_\alpha|.$$

Arguing now as in Theorem \ref{existence}, we conclude that the
matrices of $\kappa$ and $h$ are expressed as in Theorem
\ref{existence} in ($u^{\alpha}$), which ends the proof of Theorem
\ref{flat k}.
\end{proof}

\begin{proof}[Proof of Theorem \ref{non Kahler} \eqref{non symp}]
Recall that we set $h=\hat g-i\hat\omega$ and we have
$\hat\omega=g(\cdot,J\cdot)$. The relation $d \hat \omega \neq 0$ is a
consequence the following lemma.

\begin{lemma}\label{lemma comp}
Let $M$ be a complex analytic manifold. Let $h$ be a Hermitian
pseudo-metric on it. Let $-\hat{\omega}$ be the imaginary part of
$h$. Denote by $D^{'}$ the Chern connection of $h$. Then $$d
\hat{\omega}=0 \Leftrightarrow D'_X Y - D'_Y X = [X, Y],\quad \forall
X, Y \in \mathcal{T}_{M}^{(1, 0)}.$$
\end{lemma}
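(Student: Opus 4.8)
The plan is to reduce this to the classical identification, in holomorphic coordinates, of the $(2,1)$-part of $d\hat\omega$ with the torsion of the Chern connection. Almost everything will come out of a single application of the Cartan formula on a coordinate frame, once the type bookkeeping is done; this in effect reproves a piece of Theorem 3.13 of \cite{CV}.

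First I would record two elementary reductions. Since $\hat\omega$ is a real $(1,1)$-form (as recalled in \S\ref{subsec:Kahler}), $d\hat\omega=\partial\hat\omega+\overline{\partial}\hat\omega$ has only a $(2,1)$- and a $(1,2)$-component, and these two are complex conjugates of one another; hence $d\hat\omega=0$ if and only if $d\hat\omega(\partial_{z^i},\partial_{z^j},\partial_{\bar z^k})=0$ in every holomorphic chart, for all $i,j,k$. Second, writing $h_{i\bar j}:=h(\partial_{z^i},\partial_{z^j})$ for the Hermitian matrix of $h$, the decomposition $h=\hat g-i\hat\omega$ gives $\hat\omega(\partial_{z^i},\partial_{z^j})=0$ (two $(1,0)$-slots) and $\hat\omega(\partial_{z^i},\partial_{\bar z^j})=c\,h_{i\bar j}$ for a fixed nonzero scalar $c$.

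Then I would apply the Cartan formula to $\hat\omega$ and the commuting coordinate fields $\partial_{z^i},\partial_{z^j},\partial_{\bar z^k}$: all bracket terms drop out, and using $\hat\omega(\partial_{z^i},\partial_{z^j})=0$ one is left with $d\hat\omega(\partial_{z^i},\partial_{z^j},\partial_{\bar z^k})=c\,(\partial_{z^i}h_{j\bar k}-\partial_{z^j}h_{i\bar k})$. Now invoke the compatibility of the Chern connection in the form $\partial h(X,Y)=h(D'X,Y)+h(X,\overline{\partial}Y)$ with $X=\partial_{z^j}$, $Y=\partial_{z^k}$: since $\partial_{z^k}$ is a holomorphic section of $\mathcal{T}_M^{(1,0)}$ we have $\overline{\partial}(\partial_{z^k})=0$, so $\partial_{z^i}h_{j\bar k}=h(D'_{\partial_{z^i}}\partial_{z^j},\partial_{z^k})$, and symmetrically for $\partial_{z^j}h_{i\bar k}$. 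Inserting the (vanishing) bracket $[\partial_{z^i},\partial_{z^j}]$ for free, this yields
\[
d\hat\omega(\partial_{z^i},\partial_{z^j},\partial_{\bar z^k})=c\,h\bigl(D'_{\partial_{z^i}}\partial_{z^j}-D'_{\partial_{z^j}}\partial_{z^i}-[\partial_{z^i},\partial_{z^j}],\ \partial_{z^k}\bigr).
\]

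Finally I would conclude. If $D'_XY-D'_YX=[X,Y]$ for all $X,Y\in\mathcal{T}_M^{(1,0)}$, the displayed identity gives $d\hat\omega(\partial_{z^i},\partial_{z^j},\partial_{\bar z^k})=0$ for all $i,j,k$, hence $d\hat\omega=0$ by the first reduction. Conversely, if $d\hat\omega=0$, then $h(D'_{\partial_{z^i}}\partial_{z^j}-D'_{\partial_{z^j}}\partial_{z^i}-[\partial_{z^i},\partial_{z^j}],\partial_{z^k})=0$ for all $i,j,k$; since $h$ is non-degenerate and the expression $D'_XY-D'_YX-[X,Y]$ is tensorial in each of $X,Y$ while the $\partial_{z^i}$ form a local frame of $\mathcal{T}_M^{(1,0)}$, it vanishes identically on $\mathcal{T}_M^{(1,0)}\times\mathcal{T}_M^{(1,0)}$. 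The only real obstacle is the bookkeeping at the start — checking that the surviving $d$-components of the real $(1,1)$-form $\hat\omega$ are exactly the $(2,1)$-ones and that $\hat\omega(\partial_{z^i},\partial_{\bar z^j})$ is a nonzero multiple of $h_{i\bar j}$; after that the argument is just the Cartan formula and the defining property of the Chern connection.
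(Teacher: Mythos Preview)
Your argument is correct and follows essentially the same route as the paper: work in holomorphic coordinates, reduce $d\hat\omega=0$ to the symmetry condition $\partial_{z^i}h_{j\bar k}=\partial_{z^j}h_{i\bar k}$, and identify this with the vanishing of the torsion of $D'$. The only cosmetic differences are that the paper writes $\hat\omega=\frac{i}{2}\sum h_{kl}\,dt^k\wedge\overline{dt^l}$ and differentiates directly (rather than via Cartan's formula), and then passes through the explicit connection matrix $\omega_i^j=(\partial h\cdot h^{-1})_{ij}$ to reach the torsion, whereas you invoke the Chern compatibility $\partial h(X,Y)=h(D'X,Y)+h(X,\overline{\partial}Y)$ to land on $h(\text{torsion},\partial_{z^k})$ in one step; the content is the same.
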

\begin{proof}
Let $t^1,\dots, t^m$ be any system of holomorphic local
coordinates of $M$. Then
\begin{equation}\label{eq:loc o}
\hat{\omega}= \frac{i}{2} \sum_{k,l}h_{kl} dt^k \wedge
\overline{dt^l},
\end{equation}
hence
\begin{align*}
\partial \hat{\omega}&= \frac{i}{2} \sum_{k,l,p} \partial _{t^p}h_{kl}  dt^p \wedge dt^k \wedge \overline{dt^l}\\
& =\frac{i}{2} \sum_{k< p, l}[\partial _{t^p}h_{kl} -\partial
_{t^k}h_{pl}]dt^p \wedge dt^k \wedge \overline{dt^l}.
\end{align*}
It follows that
\begin{equation}\label{partial omega1}
\partial \hat{\omega}= 0 \Leftrightarrow \partial _{t^k}h_{ij}=
\partial _{t^i}h_{kj},\quad \forall i, j, k.
\end{equation}
Similarly, we get
\begin{equation}\label{partial omega2}
\overline{\partial} \hat{\omega}=0 \Leftrightarrow
\overline{\partial_{t^k}}h_{ji}= \overline{\partial_{t^i}}
h_{jk},\quad \forall i, j, k.
\end{equation}
Since $h$ is Hermitian, this implies $\overline{\partial}
\hat\omega=0 \Leftrightarrow {\partial} \hat\omega=0$, and
therefore
$$d \hat\omega=0 \Leftrightarrow \overline{\partial} \hat\omega=0
\Leftrightarrow {\partial} \hat\omega=0,$$ which is equivalent to
\begin{equation}\label{torsion}
\partial _{t^k}h_{ij}=
\partial _{t^i}h_{kj},\quad \forall i, j, k.
\end{equation}
Denote by $\omega_i^j$ the connection forms of $D'$ with respect
to $(t^i)$. Since $(t^i)$ are holomorphic, we have
\begin{equation}\label{local expr}
\omega_i^j= (\partial h \cdot h^{-1})_{ij}.
\end{equation}
By relation (\ref{local expr}), we conclude that relation
(\ref{torsion}) is equivalent to
$$\omega_{i}^{j}(\partial_{t^k})- \omega_{k}^{j}(\partial_{t^i})=0.$$
However, the above relation is equivalent to
$$D'_{\partial t^i} \partial t^j - D'_{\partial t^j}
\partial t^i -[\partial_{t^i}, \partial_{t^j}] =0,\quad \forall i, j,$$
i.e. $D'$ is torsion free.
\end{proof}
So, by Corollary \ref{have torsion} and Lemma \ref{lemma comp}, we
conclude that $d \hat{\omega} \neq 0$, i.e. $\hat{\omega}$ is not
a symplectic form on $M_{\mathbb{R}}.$
\end{proof}

\begin{proof}[Proof of theorem \ref{non Kahler} \eqref{comparison}]
Assume that $\widetilde{\mathcal{R}e}D^{'}= \hat{{\nabla}}
\mathcal{R}e$, we will derive a contradiction.
\begin{lemma}\label{lemma comp2}
Let $M$ be a complex analytic manifold. Let $h$ be a Hermitian
pseudo-metric on it. Let $\hat{g}$ be the real part of $h$. Denote
by $\hat{\nabla}$ the Levi-Civita connection of $\hat{g}$. If
$\widetilde{\mathcal{R}e}D^{'}= \hat{{\nabla}} \mathcal{R}e$, then
$D^{'}$ is torsion free.
\end{lemma}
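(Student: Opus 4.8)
The plan is to show that the hypothesis $\widetilde{\mathcal Re}\,D' = \hat\nabla\,\mathcal Re$ forces $D'$ to be torsion-free, and the key observation is that the hypothesis transports the \emph{torsion} of $D'$ on $\mathcal T_M^{(1,0)}$ to the torsion of $\hat\nabla$ on $T_{M_\mathbb R}$, which vanishes since $\hat\nabla$ is a Levi-Civita connection.

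First I would fix a system of holomorphic local coordinates $z^j = x^j + iy^j$ on $M$, so that $\mathcal Re(\partial_{z^j}) = \partial_{x^j}$ and $\mathcal Re(i\,\partial_{z^j}) = \partial_{y^j}$, and recall from \S\ref{subsec:Kahler} that $\mathcal Re$ is an isomorphism of complex vector bundles intertwining $i$ and $J$, and that $\mathcal Re \circ i = J\circ \mathcal Re$. Next I would compute the torsion of $D'$ in these coordinates: since the $\partial_{z^j}$ commute, $T^{D'}(\partial_{z^i},\partial_{z^j}) = D'_{\partial_{z^i}}\partial_{z^j} - D'_{\partial_{z^j}}\partial_{z^i}$, a section of $\mathcal T_M^{(1,0)}$. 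Applying $\mathcal Re$ and using the hypothesis $\widetilde{\mathcal Re}\,D' = \hat\nabla\,\mathcal Re$ termwise, this maps to $\hat\nabla_{\partial_{x^i}}\partial_{x^j} - \hat\nabla_{\partial_{x^j}}\partial_{x^i} = [\partial_{x^i},\partial_{x^j}] = 0$, since $\hat\nabla$ is torsion-free and $[\partial_{x^i},\partial_{x^j}] = 0$. Because $\mathcal Re$ is injective, $T^{D'}(\partial_{z^i},\partial_{z^j}) = 0$ for all $i,j$, and by $\mathcal O_M$-bilinearity of torsion this gives $T^{D'} \equiv 0$ on $\mathcal T_M^{(1,0)}$.

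The one point requiring a little care — and the main obstacle — is the bookkeeping when $\mathcal Re$ is applied to $D'_{\partial_{z^i}}\partial_{z^j}$: the connection $D'$ is a $(1,0)$-connection, so $D'_{\partial_{z^i}}\partial_{z^j} = \sum_k \omega_j^k(\partial_{z^i})\,\partial_{z^k}$ with $\omega_j^k(\partial_{z^i})$ a complex-valued function, and one must use that $\widetilde{\mathcal Re} = \mathcal Re \otimes (\mathcal Re^*)^{-1}$ correctly converts the covariant derivative, i.e. that $\widetilde{\mathcal Re}(D'V) = \hat\nabla\,\mathcal Re(V)$ is precisely the content of \eqref{eq:Re}, applied here with $V = \partial_{z^j}$ and the $1$-form slot evaluated on $\partial_{x^i} = \mathcal Re(\partial_{z^i})$. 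Once this identification is made, the computation is the routine one indicated above, so I would simply remark that it is a straightforward consequence of \eqref{eq:Re} together with the commutation of the coordinate vector fields and the symmetry of the Christoffel symbols of $\hat\nabla$.

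\begin{proof}
Fix holomorphic local coordinates $z^j = x^j + iy^j$ on $M$, so that $\mathcal Re(\partial_{z^j}) = \partial_{x^j}$. Since $\hat\nabla$ is the Levi-Civita connection of $\hat g$, it is torsion-free, so $\hat\nabla_{\partial_{x^i}}\partial_{x^j} = \hat\nabla_{\partial_{x^j}}\partial_{x^i}$ for all $i,j$. Evaluating the hypothesis $\widetilde{\mathcal Re}\,D' = \hat\nabla\,\mathcal Re$ as in \eqref{eq:Re} on $V = \partial_{z^j}$, and pairing the $1$-form part with $\partial_{x^i} = \mathcal Re(\partial_{z^i})$, we obtain
\begin{equation*}
\mathcal Re\big(D'_{\partial_{z^i}}\partial_{z^j}\big) = \hat\nabla_{\partial_{x^i}}\partial_{x^j},\quad\forall i,j.
\end{equation*}
Subtracting the same relation with $i$ and $j$ interchanged and using that $\hat\nabla$ is torsion-free gives
\begin{equation*}
\mathcal Re\big(D'_{\partial_{z^i}}\partial_{z^j} - D'_{\partial_{z^j}}\partial_{z^i}\big) = \hat\nabla_{\partial_{x^i}}\partial_{x^j} - \hat\nabla_{\partial_{x^j}}\partial_{x^i} = 0.
\end{equation*}
Since $\mathcal Re$ is an isomorphism and $[\partial_{z^i},\partial_{z^j}] = 0$, this means
\begin{equation*}
D'_{\partial_{z^i}}\partial_{z^j} - D'_{\partial_{z^j}}\partial_{z^i} - [\partial_{z^i},\partial_{z^j}] = 0,\quad\forall i,j.
\end{equation*}
The torsion of $D'$ is $\mathcal O_M$-bilinear and alternating, hence determined by its values on the coordinate frame; therefore $D'$ is torsion-free.
\end{proof}
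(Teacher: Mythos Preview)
Your proof is correct and rests on the same core idea as the paper's---namely, that the hypothesis transports the torsion of $D'$ to the torsion of $\hat\nabla$, which vanishes---but the execution is different and somewhat more economical. The paper works with arbitrary $X,Y\in\mathcal T_M^{(1,0)}$, decomposes them as $X=X_1+iX_2$, $Y=Y_1+iY_2$ with $X_i,Y_j$ real, and computes $D'_XY$ term by term in terms of $\hat\nabla$, eventually obtaining $D'_XY-D'_YX=[X,Y]$ from the torsion-freeness of $\hat\nabla$ applied to all four pairs $(X_i,Y_j)$. You instead evaluate the hypothesis on the holomorphic coordinate frame, obtain $\mathcal Re(D'_{\partial_{z^i}}\partial_{z^j})=\hat\nabla_{\partial_{x^i}}\partial_{x^j}$ directly, and conclude by the symmetry of the Levi--Civita connection and tensoriality of torsion. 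Your route avoids the bookkeeping of the real/imaginary decomposition; the paper's route has the minor advantage of yielding the torsion-free identity for all $X,Y$ at once without appealing to tensoriality. The one point you rightly flagged as delicate---why contracting $\widetilde{\mathcal Re}(D'\partial_{z^j})$ with $\partial_{x^i}$ gives exactly $\mathcal Re(D'_{\partial_{z^i}}\partial_{z^j})$---is justified once one reads $\widetilde{\mathcal Re}$ as the conjugation $A\mapsto \mathcal Re\circ A\circ\mathcal Re^{-1}$ on $\mathrm{Hom}$-spaces, which is the interpretation underlying \eqref{eq:Re}.
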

\begin{proof}
Since $\widetilde{\mathcal{R}e}D^{'}= \hat{{\nabla}}
\mathcal{R}e$, then for any $X, Y \in \mathcal{T}_M^{(1, 0)}$,
set $X = X_1 + i X_2, Y= Y_1 + i Y_2$, where $X_i, Y_j \in
\mathcal{T}_{M_{\mathbb{R}}}$. By a straightforward computation,
we have
\begin{eqnarray*}
D^{'}_X Y
& = & \mathcal{R}e (D^{'}_X Y)+ i \mathcal{I}m (D^{'}_X Y) \\
& = & \mathcal{R}e (D^{'}_X Y) - i \mathcal{R}e ( D^{'}_X i Y ) \\
& = & \mathcal{R}e (D^{'}_{X_1} Y + i D^{'}_{X_2} Y) - i \mathcal{R}e ( i D^{'}_{X_1} Y + i^2 D^{'}_{X_2} Y) \\
& = & \mathcal{R}e (D^{'}_{X_1} Y +  D^{'}_{X_2} i Y) - i \mathcal{R}e (  D^{'}_{X_1} i Y - D^{'}_{X_2} Y)\\
& = & \mathcal{R}e (D^{'}_{X_1} Y )+ \mathcal{R}e (D^{'}_{X_2} i Y) - i \mathcal{R}e (D^{'}_{X_1} i Y) + i \mathcal{R}e(D^{'}_{X_2} Y)\\
& = & \hat{\nabla}_{X_1} \mathcal{R}e (Y)  + \hat{\nabla}_{X_2} \mathcal{R}e(i Y) - i \hat{\nabla}_{X_1} \mathcal{R}e (i Y)+ i \hat{\nabla}_{X_2} \mathcal{R}e (Y),\\
& = & \hat{\nabla}_{X_1} Y_1 - \hat{\nabla}_{X_2} Y_2 + i \hat{\nabla}_{X_1} Y_2 + i \hat{\nabla}_{X_2} Y_1, \\
& = & (\hat{\nabla}_{X_1} Y_1 - \hat{\nabla}_{X_2} Y_2 ) + i
(\hat{\nabla}_{X_1} Y_2 + \hat{\nabla}_{X_2} Y_1).
\end{eqnarray*}
The sixth equality holds because of
$\widetilde{\mathcal{R}e}D^{'}= \hat{{\nabla}} \mathcal{R}e$ and
$X_1, X_2 \in \mathcal{T}_{M_{\mathbb{R}}}$. Similarly, we get
$$D^{'}_Y X = (\hat{\nabla}_{Y_1} X_1 - \hat{\nabla}_{Y_2} X_2 ) + i
(\hat{\nabla}_{Y_1} X_2 + \hat{\nabla}_{Y_2} X_1).$$ Since
$\hat{\nabla}$ is a torsion-free connection, i.e.
$$\hat{\nabla}_{W_i} W_j - \hat{\nabla}_{W_j} W_i = [W_i, W_j],\quad \forall W_i, W_j \in \mathcal{T}_{M_{\mathbb{R}}},$$
we obtain
\begin{eqnarray*}
D^{'}_X Y - D^{'}_Y X & = & (\hat{\nabla}_{X_1} Y_1
-\hat{\nabla}_{X_2} Y_2 ) + i (\hat{\nabla}_{X_1} Y_2 +
\hat{\nabla}_{X_2} Y_1) \\
& - & (\hat{\nabla}_{Y_1} X_1-
\hat{\nabla}_{Y_2} X_2 ) - i (\hat{\nabla}_{Y_1} X_2 + \hat{\nabla}_{Y_2} X_1) \\
& = & \{[X_1, Y_1] - [X_2, Y_2] \} + i \{[X_1, Y_2] + [X_2, Y_1] \} \\
& = & [X, Y].
\end{eqnarray*}
That is, $D^{'}$ is torsion-free.
\end{proof}
By Corollary \ref{have torsion}, $D'$ is not torsion-free.
However, by Lemma \ref{lemma comp2}, we get that $D^{'}$ is
torsion-free. This gives a contradiction.

In order to end the proof of Theorem \ref{non Kahler}
\eqref{comparison}, it remains to see that $\nabla$ and
$\hat{\nabla}$ cannot coincide. Let us now assume that $t^1,
t^2,\dots, t^m$ are $\nabla$-flat holomorphic local coordinates of
$M$. Set $t^j= x^j +i y^j$, then $x^1, \cdots, x^m, y^1,\dots,
y^m$ is a system of real local coordinates of $M_{\mathbb{R}}$.

\begin{claim*}
If $\widetilde{\mathcal{R}e}\nabla= \hat{{\nabla}} \mathcal{R}e$,
then $x^1,\dots, x^m, y^1,\dots, y^m$ are $\hat{\nabla}$-flat.
\end{claim*}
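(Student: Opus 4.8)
The plan is to unwind the two maps $\mathcal{R}e$ and $\widetilde{\mathcal{R}e}$ explicitly in the flat coordinates and to use the fact that $\nabla$ kills all the coordinate vector fields $\partial_{t^j}$. Recall that by definition $\mathcal{R}e(\partial_{t^j}) = \partial_{x^j}$ and $\mathcal{R}e(i\partial_{t^j}) = \partial_{y^j}$, so $\{\partial_{x^j}, \partial_{y^j}\}$ is precisely the $\mathcal{R}e$-image of a $\nabla$-flat frame of $\mathcal{T}_M^{(1,0)}$. The hypothesis $\widetilde{\mathcal{R}e}\,\nabla = \hat\nabla\,\mathcal{R}e$ says that $\hat\nabla$, transported back through $\mathcal{R}e$, agrees with $\nabla$; since $\nabla\partial_{t^j}=0$ for all $j$, this should force $\hat\nabla \partial_{x^j} = 0$ and $\hat\nabla\partial_{y^j}=0$ for all $j$, i.e.\ the real coordinates $x^1,\dots,x^m,y^1,\dots,y^m$ are $\hat\nabla$-flat.

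Concretely, I would first write $\widetilde{\mathcal{R}e}(\nabla V) = \mathcal{R}e \otimes (\mathcal{R}e^*)^{-1}(\nabla V)$ and apply it to $V = \partial_{t^j}$. Since $\nabla_{\partial_{t^k}}\partial_{t^j}=0$ for every $k$ (the $t^\bullet$ are $\nabla$-flat), the one-form part of $\nabla\partial_{t^j}$ vanishes, hence $\widetilde{\mathcal{R}e}(\nabla\partial_{t^j}) = 0$. By the hypothesis this equals $\hat\nabla(\mathcal{R}e(\partial_{t^j})) = \hat\nabla\partial_{x^j}$, so $\hat\nabla\partial_{x^j}=0$. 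Repeating the same computation with $V = i\partial_{t^j}$ (again $\nabla(i\partial_{t^j})=0$ because $\nabla$ is $\mathbb{C}$-linear and $i$ is constant) gives $\hat\nabla\partial_{y^j}=0$. This exhausts a real coordinate frame of $M_{\mathbb{R}}$, so the claim follows.

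The only mildly delicate point is to make sure the identification of $\mathcal{T}_M^{(1,0)}$ with $T_{M_{\mathbb{R}}}$ via $\mathcal{R}e$ is being used consistently on both sides of $\widetilde{\mathcal{R}e}\,\nabla = \hat\nabla\,\mathcal{R}e$: the left side has $\nabla$ acting as a $(1,0)$-connection on $\mathcal{T}_M^{(1,0)}$ with values in $\mathcal{A}_M^{(1,0)}\otimes \mathcal{T}_M^{(1,0)}$, and one must check the $\mathcal{R}e^*$-image of $dt^k$ is $dx^k - i\,dy^k$ so that contraction against real vector fields $\partial_{x^l},\partial_{y^l}$ makes sense. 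But all of this is linear bookkeeping already set up in \S\ref{subsec:Kahler}, and the vanishing of the connection coefficients of $\nabla$ in the frame $(\partial_{t^j})$ makes every term drop out regardless of these conventions. So I expect no real obstacle here; this claim is a direct consequence of flatness of the $t^j$ plus the definition of $\mathcal{R}e$, and it will be combined afterwards with the already-established non-flatness of $\hat\nabla$ on a non-trivial semi-simple CDV-structure (coming from $D'\neq\nabla$ and the torsion argument) to derive the final contradiction.
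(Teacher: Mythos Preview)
Your proposal is correct and follows exactly the same approach as the paper: apply the hypothesis $\widetilde{\mathcal{R}e}(\nabla X)=\hat\nabla(\mathcal{R}e(X))$ to $X=\partial_{t^j}$ and to $X=i\partial_{t^j}$, using $\nabla\partial_{t^j}=0$ to conclude $\hat\nabla\partial_{x^j}=0$ and $\hat\nabla\partial_{y^j}=0$. The paper's proof is a two-line version of what you wrote; your additional remarks about the bookkeeping with $\mathcal{R}e^*$ are harmless but unnecessary, since the vanishing of $\nabla\partial_{t^j}$ makes the form side irrelevant.
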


\begin{proof}
Indeed, for any $X$ in $\mathcal T_M^{(1,0)}$, we then have
$\widetilde{\mathcal{R}e}(\nabla X)= \hat{\nabla}\mathcal{R}e(X)$.
Applying this to $X=\partial_{t^j}$ (resp. $X=i\partial_{t^j}$)
gives the $\hat{\nabla}$-flatness of $\partial_{x^j}$ (resp.
$\partial_{y^j}$).
\end{proof}

So $\hat{g}(\partial_{x^i}, \partial_{x^j})$,
$\hat{g}(\partial_{x^i}, \partial_{y^j})$, and
$\hat{g}(\partial_{y^i},\partial_{y^j})$ are constants, hence all
$\hat{\omega}(\partial_{x^i},\partial_{y^j})=
\hat{g}(\partial_{x^i}, J \partial_{x^j})$ are constant.
Similarly, all $\hat{\omega}(\partial_{x^i},\partial_{x^j})$,
$\hat{\omega}(\partial_{y^i}, \partial_{x^j})$, and
$\hat{\omega}(\partial_{y^i},\partial_{y^j})$ are constant.

By relation \eqref{eq:loc o}, we conclude that all $h_{kl}$ are
constant and from \eqref{partial omega1} and \eqref{partial
omega2} this implies $d \hat{\omega}=0$. We conclude as above to a
contradiction.
\end{proof}

\begin{proof}[Proof of Corollary \ref{connections2}]
Let $t^1, t^2,\dots, t^m$ be a system of $\nabla$-flat holomorphic
local coordinates of $M$ such that $e=\partial_{t_1}$. Denote by
$\omega_{i}^{j}$ the connection forms of $D'$ with respect to
$t^i$. Assume that the matrix $h=(h_{ij})$ is diagonal. We will
derive a contradiction. The matrix $h^{-1}$ is then also diagonal.
It follows from \eqref{local expr} that
$$\omega_{i}^{j}=0, \quad\forall i\neq j.$$
For any CDV-structure, it follows from \eqref{flatk},
\eqref{flath} and Claim \ref{claim0} that the Chern connection
$D'$ and $g$ satisfy
\begin{equation}\label{Dg}
D'(g)=0.
\end{equation}
Under the condition $\omega_{i}^{j}=0$, $\forall i\neq j$,
(\ref{Dg}) is equivalent to
\begin{equation}\label{ome}
(\omega_{i}^{i}+\omega_{j}^{j})g_{ij}=0,\quad \forall i, j.
\end{equation}
By the non-degeneracy of $g$, we know that, for any $j$, there
exists a $k_j$ such that $g_{j k_j}\neq 0$. So from (\ref{ome}),
we deduce
\begin{equation}\label{omega-1}
\omega_j^j= -\omega_{k_j}^{k_j}.
\end{equation}

\begin{claim*}
$\omega_j^j = 0$, $\forall j$.
\end{claim*}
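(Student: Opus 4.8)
The plan is to prove the Claim, i.e.\ that $\omega_j^j=0$ for every $j$; granting it, the connection matrix $\omega=(\omega_i^j)$ of $D'$ vanishes identically in the flat coordinates $t^1,\dots,t^m$, whence $D'=\nabla$, contradicting Corollary~\ref{D nabla} and completing the proof of Corollary~\ref{connections2}. I would use the following facts, all valid for the Chern connection of any CDV-structure: $D'(g)=0$ (this is \eqref{Dg}), $D'(\Phi)=0$ (this is \eqref{tt1}, since $C=\Phi$), $D'_ee=0$ (this is \eqref{dee}), and the flatness $(D')^2=0$ (which follows from $D(\kappa)=0$ and $\kappa^2=\Id$ exactly as in the proof of Theorem~\ref{suff cond}); together with the already established diagonality of $\omega$ in $(t^i)$, the relation $(\omega_i^i+\omega_j^j)g_{ij}=0$ of \eqref{ome}, the semisimplicity of $(M, g, \circ, e, \mathcal E)$, and the hypothesis $\dim_{\mathbb C}M\geq3$.

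First I would pin down the $e$-components of the $\omega_i^i$. Since $\Phi_e=-\Id$, the relation $D'(\Phi)=0$ gives $D'_{\partial_{t^1}}(\Phi_{\partial_{t^i}})=D'_{\partial_{t^i}}(\Phi_{\partial_{t^1}})=0$ for every $i$; evaluating the endomorphism $D'_{\partial_{t^1}}(\Phi_{\partial_{t^i}})$ on $e=\partial_{t^1}$, and using that $\omega$ is diagonal together with $C_{1i}^k=\delta_i^k$, one obtains
\begin{equation*}
D'_{\partial_{t^1}}(\Phi_{\partial_{t^i}})(e)=-(\omega_i^i-\omega_1^1)(\partial_{t^1})\,\partial_{t^i}=0 .
\end{equation*}
Since moreover $D'_ee=\omega_1^1(\partial_{t^1})\,\partial_{t^1}=0$, this yields $\omega_i^i(\partial_{t^1})=0$ for all $i$. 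Next I would expand $D'(\Phi)=0$ in the flat coordinates: because the functions $\partial_{t^p}C_{ij}^k=\sum_m g^{km}\,\partial_{t^p}\partial_{t^i}\partial_{t^j}\partial_{t^m}F$ are symmetric in $p,i,j$, the derivative-of-$C$ terms cancel, leaving
\begin{equation*}
C_{jl}^k\,(\omega_k^k-\omega_l^l)(\partial_{t^i})=C_{il}^k\,(\omega_k^k-\omega_l^l)(\partial_{t^j}),\qquad\forall\,i,j,k,l ,
\end{equation*}
equivalently $(\omega_k^k-\omega_l^l)\wedge\big(\sum_i C_{il}^k\,dt^i\big)=0$ for every pair $k\neq l$. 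Hence, for each such pair, exactly one of the following holds: $\omega_k^k=\omega_l^l$ as $1$-forms; or $C_{jl}^k=0$ for all $j$ (that is, $\partial_{t^j}\circ\partial_{t^l}$ never has a $\partial_{t^k}$-component); or $\omega_k^k-\omega_l^l$ is a nonzero scalar multiple of $\sum_i C_{il}^k\,dt^i\neq0$.

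To finish I would combine this trichotomy with \eqref{ome} — which makes $(g_{ij})$ block-diagonal with respect to any partition $Z\sqcup N$ separating the indices $j$ with $\omega_j^j\equiv0$ from those with $\omega_j^j\not\equiv0$, and already gives $\omega_j^j=0$ whenever $g_{jj}\neq0$ — and with semisimplicity. Assuming $N\neq\emptyset$, the strategy is either to deduce from the middle branch and the non-degeneracy of $g|_N$ that $\mathrm{span}(\partial_{t^j}:j\in S)$ is an $\circ$-ideal incompatible with the decomposition forced by $e=\partial_{t^1}$ — impossible, since at a generic point $(T_tM,\circ)\cong\mathbb C^m$ and the unit $e$, being a single flat basis vector, cannot sit inside a proper factor while also being the global unit — or else to invoke the third branch and push the proportionality $\omega_k^k-\omega_l^l\sim\sum_i C_{il}^k\,dt^i$ against $C_{jl}^k=C_{lj}^k$ and \eqref{ome} until it collapses; the hypothesis $\dim_{\mathbb C}M\geq3$ enters exactly here (e.g.\ through the extra relations $D'(\Phi)=0$ produces in canonical coordinates, cf.\ Lemma~\ref{canoDphi}, which are vacuous in dimension two), and it must, since in dimension two Takahashi's examples show $h$ can be diagonal. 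Once $\omega_j^j=0$ for all $j$ is reached, $\omega\equiv0$ in $(t^i)$, so $D'=\nabla$, contradicting Corollary~\ref{D nabla}. I expect this final algebraic/combinatorial step — and isolating precisely why $m\geq3$ is indispensable — to be the main obstacle; the preceding reductions are a careful but routine unwinding of the CDV-axioms in flat coordinates.
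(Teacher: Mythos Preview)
Your setup through the relation $(\omega_k^k-\omega_l^l)\wedge\bigl(\sum_i C_{il}^k\,dt^i\bigr)=0$ is correct and matches the paper's Lemma~\ref{lemma D}. But the proof is genuinely incomplete: you explicitly leave the final step as ``the main obstacle'', and the route you sketch toward it --- ideals of the Frobenius algebra, semisimplicity, the third branch of your trichotomy --- is not how the argument closes. In fact semisimplicity plays no role whatsoever in the proof of this Claim; it enters only afterward, through Corollary~\ref{D nabla}.

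The observation you are missing is simply to specialize your own wedge relation to $l=1$. Since $C_{i1}^k=\delta_i^k$, this gives $(\omega_k^k-\omega_1^1)\wedge dt^k=0$, i.e.\ $\omega_j^j(\partial_{t^k})=\omega_1^1(\partial_{t^k})$ for every $k\neq j$; this is exactly the paper's relation \eqref{omega0}. Now combine with $\omega_j^j=-\omega_{k_j}^{k_j}$ from \eqref{omega-1}. The hypothesis $m\geq3$ enters solely through a rank argument on $g$: for each $k$ there exist $j\neq k$ and $k_j\in K_j$ with $k_j\neq k$ (otherwise all nonzero entries of $(g_{ij})$ would lie in a single row and column, forcing rank at most two). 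Then
\[
\omega_1^1(\partial_{t^k})=\omega_j^j(\partial_{t^k})=-\omega_{k_j}^{k_j}(\partial_{t^k})=-\omega_1^1(\partial_{t^k}),
\]
so $\omega_1^1=0$, and hence $\omega_j^j(\partial_{t^k})=0$ for all $k\neq j$. For the diagonal component, either $j\in K_j$ and \eqref{omega-1} gives $\omega_j^j=0$ directly, or $j\notin K_j$ so $k_j\neq j$ and $\omega_j^j(\partial_{t^j})=-\omega_{k_j}^{k_j}(\partial_{t^j})=-\omega_1^1(\partial_{t^j})=0$. Your trichotomy and the appeal to canonical coordinates or $\circ$-ideals are unnecessary detours.
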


\begin{proof}[Proof of the claim]
For any $j$, set $K_j=\{\ell\mid g_{j\ell}\neq0\}$. If $j\in K_j$,
the claim follows from \eqref{omega-1}. Assume now that $j\not\in
K_j$. The above claim can be deduced from the following lemma.
\begin{lemma}\label{lemma D}
Given any quadruple $(M, \nabla, \Phi, D^{'})$, where $M$ is a
complex analytic manifold, $\nabla$ is a flat holomorphic
connection on $\mathcal{T}_M^{(1, 0)}$, $\Phi$ is a holomorphic
symmetric Higgs field on $M$ such that $\nabla(\Phi)=0$, and
$D^{'}$ is a $(1, 0)$ connection on $\mathcal{T}_M^{(1, 0)}.$
Denote by $t^1, t^2, \cdots, t^m$ a system of holomorphic
$\nabla$-flat local coordinates of $M$, and by $\omega_i^j$ the
connection forms for $D^{'}$. Then
$$D^{'}(\Phi)=0 \Leftrightarrow T_{kj}=T_{jk},\quad \forall j,k \in \{1,
2,\dots, m\},$$ where $T_{kj}:= [C^{(k)}, \omega(j)]$, the
matrices $C^{(j)}$ are defined by the relations \eqref{eq:MPhi}, and $\omega(k)$ are defined by
$$\omega(k) :=(\omega_p^q(\partial_{t^k}))_{m \times m}.$$
\end{lemma}

\begin{proof}[Proof of Lemma \ref{lemma D}]

$D'(\Phi)=0$ is equivalent to
$$D'_{\partial_{t^j}}(\Phi_{\partial_{t^k}})(\partial_{t^i})=D'_{\partial_{t^k}}(\Phi_{\partial_{t^j}})(\partial_{t^i}),\quad\forall i, j, k.$$
Computing
$D'_{\partial_{t^j}}(\Phi_{\partial_{t^k}})(\partial_{t^i})$ we
get
\begin{align*}
D'_{\partial_{t^j}}(\Phi_{\partial_{t^k}})&(\partial_{t^i})
 = D'_{\partial_{t^j}}(\Phi_{\partial_{t^k}}\partial_{t^i})-(\Phi_{\partial_{t^k}})(D'_{\partial_{t^j}} \partial_{t^i}) \\
& = - \sum_{l} D'_{\partial_{t^j}}({C_{ik}}^{l}\partial_{t^l} )- \sum_{l} \omega_i^l (\partial_{t^j}) \Phi_{\partial_{t^k}}(\partial_{t^l} ) \\
& = - \sum_{l} \partial_{t^j}({C_{ik}}^{l}) \partial_{t^l}- \sum_{l}{C_{ik}}^{l} D'_{\partial_{t^j}} \partial_{t^l} - \sum_{l} \omega_i^l (\partial_{t^j}) \Phi_{\partial_{t^k}}(\partial_{t^l} ) \\
& = - \sum_{l} \partial_{t^j}({C_{ik}}^{l}) \partial_{t^l}-
\sum_{l,p}{C_{ik}}^{l} \omega_l^p (\partial_{t^j}) \partial_{t^p}+
\sum_{l,p} \omega_i^l (\partial_{t^j}) {C_{kl}}^p \partial_{t^p}.
\end{align*}
Computing
$D'_{\partial_{t^k}}(\Phi_{\partial_{t^j}})(\partial_{t^i})$
similarly we get
\begin{multline*}
D'_{\partial_{t^k}}(\Phi_{\partial_{t^j}})(\partial_{t^i})\\
= - \sum_{l} \partial_{t^k}({C_{ij}}^{l}) \partial_{t^l}-
\sum_{l,p}{C_{ij}}^{l} \omega_l^p (\partial_{t^k}) \partial_{t^p}+
\sum_{l,p} \omega_i^l (\partial_{t^k}) {C_{jl}}^p \partial_{t^p}.
\end{multline*}
Because of the assumptions $\nabla^2=0$ and $\nabla (\Phi)=0$, we
conclude that
$$- \sum_{l} \partial_{t^j}({C_{ik}}^{l})
\partial_{t^l}= - \sum_{l}
\partial_{t^k}({C_{ij}}^{l}) \partial_{t^l}, \forall i,j, k \in \{1, 2,\dots,
m\}.$$ So $D'(\Phi)=0$ is equivalent to the following relation
\begin{multline*}
- \sum_{l,p}{C_{ik}}^{l} \omega_l^p (\partial_{t^j})
\partial_{t^p}+ \sum_{l,p} \omega_i^l (\partial_{t^j}) {C_{kl}}^p
\partial_{t^p}\\
= - \sum_{l,p}{C_{ij}}^{l} \omega_l^p (\partial_{t^k})
\partial_{t^p}+ \sum_{l,p} \omega_i^l (\partial_{t^k}) {C_{jl}}^p
\partial_{t^p},
\end{multline*}
i.e.,
\begin{equation*}
\sum_{l,p} \Big[ {C_{ik}}^{l} \omega_l^p (\partial_{t^j}) -
\omega_i^l (\partial_{t^j}) {C_{kl}}^p \Big]
\partial_{t^p}= \sum_{l,p} \Big[ {C_{ij}}^{l} \omega_l^p (\partial_{t^k})- \omega_i^l (\partial_{t^k})
{C_{jl}}^p \Big]
\partial_{t^p}.
\end{equation*}
So $D'(\Phi)=0$ is equivalent to
\begin{equation*}
\sum_{l} \Big[{C_{ki}}^{l}\omega^{p}_{l}(\partial_{t^{j}}) -
{C_{kl}}^{p}\omega^{l}_{i}(\partial_{t^{j}})\Big] = \sum_{l}
\Big[{C_{ji}}^{l}\omega^{p}_{l}(\partial_{t^{k}}) -
{C_{jl}}^{p}\omega^{l}_{i}(\partial_{t^{k}})\Big],
\end{equation*}
$\forall i,j,k,p \in \{1, 2,\dots, m\}$, which is the desired
relation.
\end{proof}
Let us continue to prove the claim. Since for any CDV-structure,
the underlying quadruple $(M, \nabla, \Phi, D^{'})$ satisfies the
assumption of lemma \ref{lemma D}, and the relation
$D^{'}(\Phi)=0$ is included in the definition of a CDV-structure,
we get
\begin{equation}\label{tkjtjk}
[C^{(j)}, \omega(k)]=[C^{(k)}, \omega(j)],\quad \forall k, j.
\end{equation}
Note also that ${C_{j1}}^q=\delta_{jq}$ because
$\partial_{t_1}=e$. So for any $p, q$, from relation
\eqref{tkjtjk}, we have
$$\sum_{l}[{C_{jp}}^l \omega_l^q(\partial_{t^k}) -
\omega_p^l(\partial_{t^k}){C_{jl}}^q] = \sum_{l}[{C_{kp}}^l
\omega_l^q(\partial_{t^j}) -
\omega_p^l(\partial_{t^j}){C_{kl}}^q].
$$
Taking $p=1, q=j$, we get
\begin{equation}\label{omega0}
\omega_j^j(\partial_{t^k})=\omega_1^1(\partial_{t^k}),\quad
\forall k\neq j.
\end{equation}
For any $k$, because $\dim_{\mathbb{C}}M \geq 3$, there exists
$j\neq k$ such that there exists $k_j\in K_j$ with $k_j\neq k$
(otherwise, there would exist $k$ such that, for any $j\neq k$, we
have $K_j=\{k\}$; this would imply that the matrix $(g_{j\ell})$
has zero entries except in the $k$th line and the $k$th column, so
its rank is at most two, which contradicts its invertibility when
$\dim M\geq3$). Then, by (\ref{omega-1}) and (\ref{omega0}),
$$\omega_1^1(\partial_{t^k})=\omega_j^j(\partial_{t^k}) = -
\omega_{k_j}^{k_j}(\partial_{t^k})=-\omega_1^1(\partial_{t^k}),\quad
\forall k,$$ hence $\omega_1^1=0$ and
$\omega_j^j(\partial_{t^k})=0$ $\forall k\neq j$. On the other
hand, if $k_j\in K_j$, we have $j\neq k_j$, so
\begin{equation*}
\omega_j^j(\partial_{t^j})= -\omega_{k_j}^{k_j}(\partial_{t^j}) =
-\omega_1^1(\partial_{t^j}) =0.\qedhere
\end{equation*}
\end{proof}

The claim implies
$$D'\partial_{t^i}=\sum_{j}\omega_i^j
\partial_{j} = 0,\quad \forall i.$$
Hence we have $$\nabla \partial_{t^j} = 0 = D' \partial_{t^j},
\quad\forall j,$$ that is, $$D'= \nabla.$$ However, by Corollary
\ref{D nabla}, we know that $D'\neq \nabla$. This gives a
contradiction.
\end{proof}

\section{Applications}

\subsection{Some consequences of Theorem \ref{suff cond}}\label{subsection2b}

Given any Frobenius manifold $M$ of dimension $m$, if we suppose
that $g(e, e)=0$ and that the eigenvalues of $\nabla \mathcal{E}$
are simple, then by Proposition \ref{proposition4}, we can choose
$\nabla$-flat holomorphic local coordinates $t^1, t^2,\dots, t^m$
of $M$ such that the relations (\ref{gij}), (\ref{et1}),
(\ref{LieF}), (\ref{E}), (\ref{d1}), (\ref{did}) and (\ref{dFd})
hold. We will give a local expression of the relations given in
the definition of CDV-structure in these local coordinates. Let us
denote by $\omega$ the matrix $(\omega^{l}_{k})$ defined by
\begin{equation}\label{eq:omega}
D'\partial_{t_k}=\sum_l\omega^{l}_{k}\partial_{t_l}
\end{equation}
in any system of holomorphic local coordinates $t^1, t^2, \dots,
t^m$ of $M$.

\begin{lemma}\label{lemma C}
Under the assumptions of Theorem \ref{suff cond} and Proposition
\ref{proposition4}, the relation $D'(g)=0$ is equivalent to
$\omega^{j}_{i}+\omega^{m+1-i}_{m+1-j} =0$, $\forall i,j \in \{1,
2,\dots, m\}$.
\end{lemma}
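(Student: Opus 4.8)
The plan is to unwind the condition $D'(g)=0$ directly in the $\nabla$-flat holomorphic coordinates $t^1,\dots,t^m$ provided by Proposition \ref{proposition4}, in which the Gram matrix of $g$ has the anti-diagonal form $g_{ij}=\delta_{i+j,m+1}$. Extending $g$ $\mathbb{C}$-bilinearly to $\mathcal{T}_M^{(1,0)}$ and using that $D'$ is a metric $(1,0)$-connection (this is the meaning of $D'(g)=0$, compatible with the use of $\partial h(X,Y)=h(D'X,Y)+h(X,\overline{\partial}Y)$ in the proof of Claim \ref{claim0}), the identity $D'(g)=0$ reads, for all $i,j,k$,
\[
\partial_{t_k}\,g(\partial_{t_i},\partial_{t_j})
 = g\bigl(D'_{\partial_{t_k}}\partial_{t_i},\partial_{t_j}\bigr)
 + g\bigl(\partial_{t_i},D'_{\partial_{t_k}}\partial_{t_j}\bigr).
\]
Because the coordinates are $\nabla$-flat the entries $g_{ij}$ are constants, so the left-hand side vanishes identically; this is the only point where Proposition \ref{proposition4} (or, more modestly, the existence of flat coordinates) enters.

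Next I would substitute \eqref{eq:omega}, i.e. $D'_{\partial_{t_k}}\partial_{t_i}=\sum_l\omega^{l}_{i}(\partial_{t_k})\partial_{t_l}$, into the right-hand side and contract with $g$ using $g_{lj}=\delta_{l+j,m+1}$. Each of the two sums collapses to a single term, namely $\sum_l\omega^{l}_{i}(\partial_{t_k})g_{lj}=\omega^{m+1-j}_{i}(\partial_{t_k})$ and similarly $\sum_l\omega^{l}_{j}(\partial_{t_k})g_{il}=\omega^{m+1-i}_{j}(\partial_{t_k})$. Therefore $D'(g)=0$ is equivalent to the scalar identities
\[
\omega^{m+1-j}_{i}(\partial_{t_k}) + \omega^{m+1-i}_{j}(\partial_{t_k}) = 0,\qquad\forall i,j,k,
\]
that is, to the equality of $1$-forms $\omega^{m+1-j}_{i}+\omega^{m+1-i}_{j}=0$ for all $i,j$.

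Finally I would reindex: replacing $j$ by $m+1-j$ is a bijection of $\{1,\dots,m\}$, and under this substitution the last displayed relation becomes precisely $\omega^{j}_{i}+\omega^{m+1-i}_{m+1-j}=0$. Since the substitution ranges over the whole index set, the two families of equations coincide, which is exactly the assertion of the lemma.

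I do not expect a real obstacle here: the argument is a one-line Leibniz expansion followed by bookkeeping. The only points needing a little care are keeping the anti-diagonal contraction straight — in particular which factor contributes $m+1-i$ and which contributes $m+1-j$ — and the observation that $g_{ij}$ is constant precisely because the chosen coordinates are $\nabla$-flat.
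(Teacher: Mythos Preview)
Your proposal is correct and follows essentially the same route as the paper: expand $D'(g)=0$ via the Leibniz rule in the flat coordinates, use that $g_{ij}=\delta_{i+j,m+1}$ is constant so the left-hand side vanishes, contract with the anti-diagonal metric to obtain $\omega^{m+1-j}_{i}+\omega^{m+1-i}_{j}=0$, and then reindex $j\mapsto m+1-j$. The paper's proof is essentially identical, differing only in that it writes the identity as an equality of $1$-forms rather than evaluating on each $\partial_{t^k}$.
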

\begin{proof}
By definition,
\begin{equation*}
D_h(g)=0.
\end{equation*}
is equivalent to
\begin{equation*}
0=dg_{ij}=g(D_h \partial_{t^{i}}, \partial_{t^{j}})+g(
\partial_{t^{i}}, D_h \partial_{t^{j}}),
\end{equation*}
that is, to
\begin{equation*}
\sum_{k}\omega^{k}_{i} g_{kj}+\sum_{k}\omega^{k}_{j} g_{ki} =0.
\end{equation*}
Now $g_{ij}= \delta_{i+j, m+1}$ holds, so this is equivalent to
\begin{equation*}
\omega^{m+1-j}_{i}+\omega^{m+1-i}_{j} =0, \quad\forall i,j,
\end{equation*}
and changing notation, this is equivalent to
\begin{equation*}
\omega^{j}_{i}+\omega^{m+1-i}_{m+1-j} =0, \quad\forall i,j \in
\{1, 2,\dots, m\}.\qedhere
\end{equation*}
\end{proof}

\begin{remark}
Under the assumptions of Theorem \ref{suff cond} and Proposition
\ref{proposition4}, we can get the relation $D'(g)=0$ in another
way. In fact by Lemma \ref{lemma A}, we have
$$h^{-1}= g \cdot
\overline{h} \cdot g,$$ so
\begin{equation*}
\sum_{k} h_{ik} \cdot \overline{h}_{m+1-k,m+1-j}= \delta_{ij}.
\end{equation*}
So we get
\begin{equation*}
\sum_{k} \partial h_{ik} \cdot \overline{h}_{m+1-k,m+1-j}+\sum_{k}
{h}_{ik} \cdot \partial \overline{h}_{m+1-k,m+1-j} =0.
\end{equation*}

Now we compute $\omega_{i}^{j}$ first. we get $\forall i, j \in \{
1, 2,\dots, m\}$,
\begin{eqnarray*}
\omega_{i}^{j}
& = & \sum_k \partial h_{ik} \cdot h^{kj}\\
& = & \sum_{k,l,p} \partial h_{ik} \cdot g_{kl}\cdot \overline{h}_{lp} \cdot g_{pj} \\
& = & \sum_{k,l} \partial h_{ik} \cdot g_{kl} \cdot \overline{h}_{l,m+1-j} \\
& = & \sum_{k} \partial h_{ik} \cdot \overline{h}_{m+1-k,m+1-j},
\end{eqnarray*}
hence we have
\begin{eqnarray*}
\omega_{m+1-j}^{m+1-i} & = & \sum_{k} \partial h_{m+1-j,k} \cdot
\overline{h}_{m+1-k,i}\\
& = & \sum_{k} \partial h_{m+1-j,m+1-k} \cdot
\overline{h}_{ki}\\
& = & \sum_{k} \partial \overline{h}_{m+1-k,m+1-j} \cdot {h}_{ik}
\end{eqnarray*}
i.e.,
\begin{equation*}
\omega_{i}^{j} + \omega_{m+1-j}^{m+1-i}=0.
\end{equation*}
\end{remark}

\begin{remark}
Under the assumptions of Theorem \ref{suff cond} and Proposition
\ref{proposition4}, we get
\begin{enumerate}
\item $m=2$ implies that $\omega^{1}_{2}=\omega^{2}_{1}=0$,
$\omega^{2}_{2}=-\omega^{1}_{1}$, so if we want to compute all
$\omega^{i}_{j}$, we just need compute $\omega^{1}_{1}$.

\item $m=3$ implies that
$\omega^{1}_{3}=\omega^{3}_{1}=\omega^{2}_{2}=0$,
$\omega^{3}_{2}=-\omega^{2}_{1}$,
$\omega^{2}_{3}=-\omega^{1}_{2}$,
$\omega^{3}_{3}=-\omega^{1}_{1}$, so if we want to compute all
$\omega^{i}_{j}$, we just need compute $\omega^{1}_{1}$,
$\omega^{2}_{1}$, $\omega^{1}_{2}$.
\end{enumerate}
\end{remark}

\begin{lemma}\label{lemma E}
Under the assumptions of Theorem \ref{suff cond} and Proposition
\ref{proposition4}, if $m=3$, then the relation $D'(\Phi)=0$ is
equivalent to the following three relations
$$
\left\{
\begin{array}{lll}
\omega^{2}_{1}(\partial_{t^{3}})=\omega^{1}_{1}(\partial_{t^{2}}),
\\[4mm]
\omega^{1}_{1}(\partial_{t^{3}}) =
C_{223}\omega^{2}_{1}(\partial_{t^{2}}) +
\omega^{1}_{2}(\partial_{t^{2}})-
C_{222}\omega^{1}_{1}(\partial_{t^{2}}),
\\[4mm]
\omega^{1}_{2}(\partial_{t^{3}})=C_{223}\omega^{1}_{1}(\partial_{t^{2}})
- C_{233}\omega^{2}_{1}(\partial_{t^{2}}).
\end{array}
\right.
$$
\end{lemma}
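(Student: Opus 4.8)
The plan is to run the general criterion of Lemma~\ref{lemma D} in dimension $m=3$, using the normal form of Proposition~\ref{proposition4}. Since the quadruple $(M,\nabla,\Phi,D')$ underlying the structure satisfies $\nabla^2=0$ and $\nabla(\Phi)=0$, Lemma~\ref{lemma D} gives
\[
D'(\Phi)=0\iff[C^{(j)},\omega(k)]=[C^{(k)},\omega(j)]\quad\text{for all }j,k\in\{1,2,3\},
\]
where $\omega(k)=(\omega^q_p(\partial_{t^k}))_{3\times3}$ and $C^{(j)}$ is the matrix of $\Phi_{\partial_{t^j}}$. Because $e=\partial_{t^1}$ and $\Phi_e=-\Id$ we have $C^{(1)}=\Id$, so the instances with $j=1$ only involve $\omega(\partial_{t^1})$ and express that the flat frame is $D'_e$-parallel --- exactly what is extracted in the proof of Theorem~\ref{suff cond} (there $D'_e\partial_{t^i}=\partial_{t^i}\circ D'_ee$ and $D'_ee=0$). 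Thus the remaining content is the single matrix identity $[C^{(2)},\omega(3)]=[C^{(3)},\omega(2)]$, and the goal is to show that, after substitution, this is equivalent to the three displayed scalar equations.

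To do this I would write both sides out explicitly. On one hand, from \eqref{LieF} with $m=3$, writing $f=f(t^2,t^3)$, the third derivatives $C_{ijk}$ with an index equal to $1$ are fixed by $C_{1ij}=g_{ij}=\delta_{i+j,4}$, and the others reduce to $C_{222},C_{223},C_{233}$ (and $C_{333}$). Raising the last index with $g^{pq}=\delta_{p+q,4}$ yields the explicit $3\times3$ matrices $C^{(2)}$ and $C^{(3)}$ in terms of these constants. On the other hand, the relation $D'(g)=0$ --- which holds here by Lemma~\ref{lemma 2th C} and Claim~\ref{claim0} --- gives, through Lemma~\ref{lemma C}, the identities $\omega^1_3=\omega^3_1=\omega^2_2=0$, $\omega^3_2=-\omega^2_1$, $\omega^2_3=-\omega^1_2$, $\omega^3_3=-\omega^1_1$, so that each of $\omega(2)$ and $\omega(3)$ is a $3\times3$ matrix assembled from the three scalars $\omega^1_1,\omega^2_1,\omega^1_2$ evaluated on $\partial_{t^2}$, respectively $\partial_{t^3}$.

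Finally I would substitute and compare entries of $[C^{(2)},\omega(3)]$ and $[C^{(3)},\omega(2)]$. The commutator is antisymmetric and the $D'(g)=0$ relations force many entries of each side to coincide, so the nine scalar equations collapse to three independent ones, which turn out to be exactly the displayed system; the coefficients $C_{222},C_{223},C_{233}$ enter through $C^{(2)},C^{(3)}$, while $C_{333}$ drops out of the reduced equations. The only non-mechanical point is this last bracket computation: one must keep straight the anti-diagonal form of $g$, the sign in $\Phi_XY=-X\circ Y$, and the placement of upper/lower indices in $C^{(j)}$ and $\omega(k)$. A convenient internal check is that the computed $C^{(2)},C^{(3)}$ must commute, reflecting $\Phi_{\partial_{t^2}}\Phi_{\partial_{t^3}}=\Phi_{\partial_{t^3}}\Phi_{\partial_{t^2}}$.
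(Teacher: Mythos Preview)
Your strategy matches the paper's: invoke Lemma~\ref{lemma D}, dispose of the $j=1$ instances via $\omega(e)=0$, and expand the single remaining matrix identity $[C^{(2)},\omega(3)]=[C^{(3)},\omega(2)]$ entry by entry after imposing the constraints of Lemma~\ref{lemma C}. The gap is in the last reduction. First, the commutator of a $g$-symmetric matrix with a $g$-antisymmetric one is $g$-\emph{symmetric}, not antisymmetric; this cuts the nine scalar equations down, but the paper still arrives at \emph{five} distinct ones, labeled \eqref{31}, \eqref{32}, \eqref{33}, \eqref{23}, \eqref{13}. The last two genuinely contain $C_{333}$, and the paper shows they follow from \eqref{32} and \eqref{33} only after substituting the WDVV relation $C_{223}^2-C_{222}C_{233}-C_{333}=0$ (this is the explicit Claim at the end of the proof). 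So $C_{333}$ does not simply ``drop out'': it is eliminated by WDVV, which you mention only as an internal consistency check on $[C^{(2)},C^{(3)}]=0$ rather than as a step in the argument. Without making that step explicit, your collapse from nine equations to three is unjustified.
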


\begin{proof}
Because $\omega(e)=0$, we just need to compute
$\omega(\partial_{t^2})$ and $\omega(\partial_{t^3})$. From Lemma
\ref{lemma D}, we know that $D'(\Phi)=0$ is equivalent to
$T_{jk}=T_{kj}$ $\forall j,k \in \{1, 2,\dots, m\}$.

Because when $j=k$, the above relations hold automatically, so the
non-trivial cases are that $j=2, k=3$ or $j=3,k=2$. However $j=2,
k=3$ and $j=3,k=2$ give the same relations.

$Step$ $1^{\circ}$ Take $p=3$. We have
$$T_{kj}=C_{2ik}\omega_{2}^{3}(\partial_{t^{j}})+
g_{ik}\omega_{3}^{3}(\partial_{t^{j}})-
\omega_{i}^{4-k}(\partial_{t^{j}})$$

By the relation $\omega_{i}^{j}+\omega_{3+1-j}^{3+1-i}=0$ we have
\begin{equation*}
T_{kj}= -C_{2ik}\omega_{1}^{2}(\partial_{t^{j}})-
g_{ik}\omega_{1}^{1}(\partial_{t^{j}})-
\omega_{i}^{4-k}(\partial_{t^{j}})
\end{equation*}

If $i=1$, then we get
\begin{equation*}
T_{kj}^{i=1}= -g_{k2}\omega_{1}^{2}(\partial_{t^j}) - g_{k1}
\omega_{1}^{1}(\partial_{t^j}) - \omega_{1}^{4-k}(\partial_{t^j}).
\end{equation*}
Then $T_{23}=T_{32}$ is equivalent to
\begin{align}\label{31}
\omega^{2}_{1}(\partial_{t^{3}})=\omega^{1}_{1}(\partial_{t^{2}}).
\end{align}

If $i=2$, then $T_{23}=T_{32}$ is equivalent to
\begin{equation*}
C_{222}\omega^{2}_{1}(\partial_{t^{3}})+\omega^{1}_{1}(\partial_{t^{3}})+
\omega^{2}_{2}(\partial_{t^{3}})=C_{223}\omega^{2}_{1}(\partial_{t^{2}})+0+\omega^{1}_{2}(\partial_{t^{2}}).
\end{equation*}
$\omega^{2}_{1}(\partial_{t^{3}})=\omega^{1}_{1}(\partial_{t^{2}})$
and $\omega_{i}^{m+1-i}=0$ hold, so
\begin{align}\label{32}
C_{222}\omega^{1}_{1}(\partial_{t^{2}})+\omega^{1}_{1}(\partial_{t^{3}})
- C_{223}\omega^{2}_{1}(\partial_{t^{2}})-
\omega^{1}_{2}(\partial_{t^{2}})=0.
\end{align}

If $i=3$, then $T_{23}=T_{32}$ is equivalent to
\begin{align}\label{33}
C_{223}\omega^{1}_{1}(\partial_{t^{2}})-
C_{233}\omega^{2}_{1}(\partial_{t^{2}})-\omega^{1}_{2}(\partial_{t^{3}})=0.
\end{align}

$Step$ $2^{\circ}$ Take $p=2$. we have
$$T_{kj}=C_{3ik}\omega_{1}^{2}(\partial_{t^{j}})-
g_{ik}\omega_{2}^{1}(\partial_{t^{j}})- \sum_{l} C_{2kl}
\omega_{i}^{l}(\partial_{t^{j}})$$

If $i=1$, then $T_{23}=T_{32}$ is equivalent to
\begin{align}\label{21}
C_{222}\omega^{1}_{1}(\partial_{t^{2}})+\omega^{1}_{1}(\partial_{t^{3}})-C_{223}\omega^{2}_{1}(\partial_{t^{2}})-\omega^{1}_{2}(\partial_{t^{2}})=0.
\end{align}

If $i=2$, then $T_{23}=T_{32}$ is equivalent to
\begin{align}\label{22}
C_{223}\omega^{1}_{1}(\partial_{t^{2}}) -
C_{233}\omega^{2}_{1}(\partial_{t^{2}})-
\omega^{1}_{2}(\partial_{t^{3}})=0.
\end{align}

If $i=3$, then $T_{23}=T_{32}$ is equivalent to
\begin{align}\label{23}
C_{223}\omega^{1}_{1}(\partial_{t^{3}})-
C_{333}\omega^{2}_{1}(\partial_{t^{2}})-C_{223}\omega^{1}_{2}(\partial_{t^{2}})+C_{222}\omega^{1}_{2}(\partial_{t^{3}})=0.
\end{align}

$Step$ $3^{\circ}$ Take $p=1$. we have
$$T_{kj}=C_{3ik}\omega_{1}^{1}(\partial_{t^{j}})+
C_{2ik}\omega_{2}^{1}(\partial_{t^{j}})- \sum_{l} C_{3kl}
\omega_{i}^{l}(\partial_{t^{j}}).$$

If $i=1$, then computing it directly we have
\begin{align}\label{11}
C_{223}\omega^{1}_{1}(\partial_{t^{2}}) -
C_{233}\omega^{2}_{1}(\partial_{t^{2}})-
\omega^{1}_{2}(\partial_{t^{3}})=0.
\end{align}

If $i=2$, then we have
\begin{align}\label{12}
C_{223}\omega^{1}_{1}(\partial_{t^{3}})- C_{333}
\omega^{2}_{1}(\partial_{t^{2}})-C_{223}\omega^{1}_{2}(\partial_{t^{2}})+C_{222}\omega^{1}_{2}(\partial_{t^{3}})=0.
\end{align}

If $i=3$, then we have
\begin{align}\label{13}
C_{333}\omega^{1}_{1}(\partial_{t^{2}})-
C_{233}\omega^{1}_{1}(\partial_{t^{3}})+C_{233}\omega^{1}_{2}(\partial_{t^{2}})-C_{223}\omega^{1}_{2}(\partial_{t^{3}})=0.
\end{align}

So we just need five relations (\ref{31}), (\ref{32}), (\ref{33}),
(\ref{23}) and (\ref{13}), i.e.,
\begin{equation*}
\omega^{2}_{1}(\partial_{t^{3}})=\omega^{1}_{1}(\partial_{t^{2}}),
\end{equation*}
\begin{equation*}
C_{222}\omega^{1}_{1}(\partial_{t^{2}}) +
\omega^{1}_{1}(\partial_{t^{3}}) -
C_{223}\omega^{2}_{1}(\partial_{t^{2}}) -
\omega^{1}_{2}(\partial_{t^{2}})=0,
\end{equation*}
\begin{equation*}
C_{223}\omega^{1}_{1}(\partial_{t^{2}}) -
C_{233}\omega^{2}_{1}(\partial_{t^{2}}) -
\omega^{1}_{2}(\partial_{t^{3}})=0,
\end{equation*}
\begin{equation*}
C_{223}\omega^{1}_{1}(\partial_{t^{3}}) -C_{333}
\omega^{2}_{1}(\partial_{t^{2}}) -
C_{223}\omega^{1}_{2}(\partial_{t^{2}}) +
C_{222}\omega^{1}_{2}(\partial_{t^{3}})=0,
\end{equation*}
\begin{equation*}
C_{333}\omega^{1}_{1}(\partial_{t^{2}}) -C_{233}
\omega^{1}_{1}(\partial_{t^{3}}) +
C_{233}\omega^{1}_{2}(\partial_{t^{2}}) -
C_{223}\omega^{1}_{2}(\partial_{t^{3}})=0,
\end{equation*}

\begin{claim*}
The relations (\ref{32}) and (\ref{33}) together with
WDVV-equation imply the relations (\ref{23}) and (\ref{13}).
\end{claim*}

In fact, $m=3$, so WDVV-equations is just
\begin{align}
(C_{223})^2 - C_{222} \cdot C_{233}- C_{333}= 0.\label{WDVV}
\end{align}
From the relation (\ref{32}) we get
\begin{align}
\omega^{1}_{1}(\partial_{t^{3}})= -
C_{222}\omega^{1}_{1}(\partial_{t^{2}}) +
C_{223}\omega^{2}_{1}(\partial_{t^{2}}) +
\omega^{1}_{2}(\partial_{t^{2}}).\label{omega113}
\end{align}

From the relation (\ref{33}) we get
\begin{align}
\omega^{1}_{2}(\partial_{t^{3}}) =
C_{223}\omega^{1}_{1}(\partial_{t^{2}}) -
C_{233}\omega^{2}_{1}(\partial_{t^{2}}).\label{omega213}
\end{align}

So computing directly we prove that relations (\ref{WDVV}),
(\ref{omega113}) and (\ref{omega213}) imply relations (\ref{23})
and (\ref{13}).
\end{proof}

\begin{remark}
Under the assumptions of Theorem \ref{suff cond} and Proposition
\ref{proposition4}, if $m=2$, then the relation $D'(\Phi)=0$ is
equivalent to
$$D'_e \partial_{t^2}= \partial_{t^2} \circ
D'_e e.$$

From $D'(\Phi)=0$ and $D'_{e}e=0$, we have $\omega^{1}_{1}(e)=0$,
i.e., $eh_{ij}=0$, $\forall i,j \in \{1, 2\}$. So computing
$\omega_1^1$, we just need to compute
$\omega_1^1(\partial_{t^2})$.
\end{remark}

\begin{lemma}\label{lemma 2B}
If $m=2$, under the assumption of Theorem \ref{suff cond} and
Proposition \ref{proposition4}, we have $h(a, b)=g(a, \kappa b)$,
$\kappa^2 = \Id$, which are equivalent to the following relations:
$$
\left\{
\begin{array}{lll}
|h_{12}|^2+h_{11}h_{22}=1,
\\[4mm]
h_{11}h_{12}=0,
\\[4mm]
h_{22}h_{12}=0.
\end{array}
\right.
$$
\end{lemma}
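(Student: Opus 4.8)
The plan is to reduce the statement to the $2\times2$ matrix identity $\overline{K}\cdot K=I_{2\times2}$ that was already isolated in the proof of Lemma \ref{lemma A}, and then to evaluate both sides explicitly using the anti-diagonal normal form of $g$ provided by Proposition \ref{proposition4}.

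First I would record the shape of the data. By Proposition \ref{proposition4} the flat coordinates can be chosen so that $g_{ij}=\delta_{i+j,3}$; as a matrix this is $g=\left(\begin{smallmatrix}0&1\\1&0\end{smallmatrix}\right)$, which is real and satisfies $g^{-1}=g$. The standing hypothesis $h(a,b)=\overline{h(b,a)}$ from Theorem \ref{suff cond} says exactly that $(h_{ij})$ is Hermitian, i.e. $h_{11},h_{22}\in\mathbb{R}$ and $h_{21}=\overline{h_{12}}$. Since $g$ is non-degenerate, the relation $h(a,b)=g(a,\kappa b)$ determines the anti-linear endomorphism $\kappa$ uniquely, and by the computation in the proof of Lemma \ref{lemma A} its matrix is $K=\overline{h}\cdot g^{-1}=\overline{h}\cdot g$; moreover, again from that proof, $\kappa^{2}=\Id$ holds if and only if $\overline{K}\cdot K=I_{2\times2}$.

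It then remains to carry out the computation. Substituting $g=\left(\begin{smallmatrix}0&1\\1&0\end{smallmatrix}\right)$ and using $h_{11},h_{22}\in\mathbb{R}$, $h_{21}=\overline{h_{12}}$, one gets
\[
K=\begin{pmatrix}\overline{h_{12}}&h_{11}\\ h_{22}&h_{12}\end{pmatrix},\qquad
\overline{K}\cdot K=\begin{pmatrix}|h_{12}|^{2}+h_{11}h_{22}&2h_{11}h_{12}\\ 2h_{22}\overline{h_{12}}&|h_{12}|^{2}+h_{11}h_{22}\end{pmatrix}.
\]
Setting the right-hand side equal to $I_{2\times2}$ gives the diagonal condition $|h_{12}|^{2}+h_{11}h_{22}=1$ (which appears in both diagonal entries) together with $h_{11}h_{12}=0$ and $h_{22}\overline{h_{12}}=0$; since $h_{22}$ is real the last equation is equivalent to $h_{22}h_{12}=0$. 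This is exactly the displayed system, so the lemma follows. I do not expect any real obstacle here: the only point requiring attention is keeping the transpose/conjugation conventions straight, and these are fixed once one reuses verbatim the two inputs $K=\overline{h}\,g^{-1}$ and $\big(\kappa^{2}=\Id\Leftrightarrow\overline{K}\cdot K=I\big)$ from the proof of Lemma \ref{lemma A}.
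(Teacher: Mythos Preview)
Your proposal is correct and is precisely the ``straightforward computation'' the paper omits: you use the identities $K=\overline{h}\,g^{-1}$ and $\kappa^{2}=\Id\Leftrightarrow\overline{K}K=I$ from the proof of Lemma \ref{lemma A} together with the anti-diagonal form of $g$ from Proposition \ref{proposition4}, and evaluate the $2\times2$ matrix product. There is nothing to add.
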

\begin{proof}
Straightforward computation.
\end{proof}

\begin{remark}
If $h$ is positive, then $h_{ii} > 0$, so $h$ is diagonal
$$h= \diag(h_{11},
h_{11}^{-1}).$$ Otherwise if $h_{11}=0$ or $h_{22}=0$, we get
$$h_{11}=h_{22}=0\quad \text{and}\quad\vert h_{12} \vert =1.$$
\end{remark}

\begin{lemma}\label{lemma B}
If $m=3$, under the assumption of Theorem \ref{suff cond} and
Proposition \ref{proposition4}, we have $h(a, b)=g(a, \kappa b)$,
$\kappa^2 = \Id$, which are equivalent to the following relations:
$$
\left\{
\begin{array}{lll}
h_{11}h_{33}+h_{12}h_{32}+|h_{13}|^{2}=1,
\\[4mm]
2h_{21}h_{23}+(h_{22})^{2}=1,
\\[4mm]
h_{11}h_{23}+h_{12}h_{22}+h_{13}h_{21}=0,
\\[4mm]
2h_{11}h_{13}+(h_{12})^{2}=0,
\\[4mm]
h_{12}h_{33}+h_{22}h_{23}+h_{13}h_{32}=0,
\\[4mm]
2h_{13}h_{33}+(h_{23})^{2}=0.
\end{array}
\right.
$$
\end{lemma}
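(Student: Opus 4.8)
The plan is to reduce the two structural conditions to a single matrix identity, exactly as in Lemma~\ref{lemma 2B}, and then read it off entrywise. First I would fix the coordinates supplied by Proposition~\ref{proposition4}, so that the Gram matrix of $g$ is the anti-diagonal matrix with $g_{ij}=\delta_{i+j,4}$; in particular $g$ is real and $g^{-1}=g$. By the computation in the proof of Lemma~\ref{lemma A} (using also the Hermitian symmetry of $h$), the relation $h(a,b)=g(a,\kappa b)$ forces the matrix $K$ of $\kappa$ to be $K=\overline h\cdot g^{-1}$, while $\kappa^2=\Id$ is equivalent to $\overline K\cdot K=\Id$. Eliminating $K$ and using $\overline g=g=g^{-1}$, the two conditions together become the single identity
\begin{equation*}
h\cdot g\cdot\overline h\cdot g=\Id,\qquad\text{i.e.}\qquad h\cdot g\cdot\overline h=g .
\end{equation*}

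Next I would compute the $3\times3$ matrix $M:=h\cdot g\cdot\overline h$ entrywise, using the Hermitian symmetry $h_{ji}=\overline{h_{ij}}$ (so $h_{11},h_{22},h_{33}$ are real) to put each entry into the shape appearing in the statement; since left multiplication by $g$ merely reverses the order of the rows of $\overline h$, this is short. One checks that $M^{t}=(\overline h)^{t}\,g\,h^{t}=h\,g\,\overline h=M$, i.e.\ $M$ is complex symmetric, so the equation $M=g$ collapses from nine scalar conditions to the six conditions $M_{11}=0$, $M_{22}=1$, $M_{33}=0$, $M_{12}=0$, $M_{13}=1$, $M_{23}=0$ (the entries $M_{21},M_{31},M_{32}$ contributing nothing new). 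Writing these six out yields precisely the displayed system: $M_{11}=2h_{11}h_{13}+(h_{12})^{2}$, $M_{13}=h_{11}h_{33}+h_{12}h_{32}+|h_{13}|^{2}$, $M_{22}=2h_{21}h_{23}+(h_{22})^{2}$, $M_{12}=h_{11}h_{23}+h_{12}h_{22}+h_{13}h_{21}$, while $M_{23}$ and $M_{33}$ give $h_{21}h_{33}+h_{22}h_{32}+h_{23}h_{31}$ and $2h_{31}h_{33}+(h_{32})^{2}$, which after conjugation (using $h_{31}=\overline{h_{13}}$, $h_{32}=\overline{h_{23}}$, $h_{21}=\overline{h_{12}}$) coincide with relations five and six of the lemma.

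I do not expect a genuine obstacle: this is the $m=3$ analogue of Lemma~\ref{lemma 2B} and, like it, is a bookkeeping computation. The only points needing a little care are (i) recording correctly which Hermitian substitutions turn $M_{23}$ and $M_{33}$ into the stated polynomials, and (ii) verifying that the three ``extra'' off-diagonal entries are redundant, which is immediate once one observes that $M$ is complex symmetric and that complex conjugation exchanges $M_{23}$ with the expression in relation five and $M_{33}$ with the expression in relation six.
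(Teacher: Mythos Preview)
Your proposal is correct and is precisely the ``straightforward computation'' the paper invokes: you reduce to the matrix identity $h\,g\,\overline h=g$ via Lemma~\ref{lemma A} (using $\overline g=g=g^{-1}$), observe that $M=h\,g\,\overline h$ is complex symmetric so only six entries need checking, and the Hermitian substitutions you record are accurate. There is nothing to add.
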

\begin{proof}
Straightforward computation.
\end{proof}

\begin{lemma}\label{lemma I}
Under the assumptions of Theorem \ref{suff cond} and Proposition
\ref{proposition4}, $\mathcal{Q}=\mathcal{Q}^{\dag}$ is equivalent
to the following relation
\begin{equation*}
(\mathcal{E}- \overline{\mathcal{E}}) h = h \cdot A- A \cdot h,
\end{equation*}
where the matrix $ A_{ij}= \delta_{ij} d_{i}$. That is
\begin{equation*}
(\mathcal{E}- \overline{\mathcal{E}}) h_{ij}=(d_j - d_i) \cdot
h_{ij}, \forall i,j.
\end{equation*}
\end{lemma}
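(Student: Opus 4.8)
The plan is to reduce the operator identity $\mathcal{Q}=\mathcal{Q}^{\dag}$ to a matrix identity in the distinguished coordinates of Proposition \ref{proposition4} and then recognize the outcome as the stated equation for the entries $h_{ij}$. Both implications will be obtained at once, since every step will be an equivalence.

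First I would compute the matrix of $\mathcal{Q}$ in these coordinates. Since $\mathcal{E}=\sum_i d_i t^i\partial_{t^i}+\sum_{i\mid d_i=0} r^i\partial_{t^i}$, a direct computation gives $\mathcal{L}_{\mathcal{E}}\partial_{t^i}=[\mathcal{E},\partial_{t^i}]=-d_i\,\partial_{t^i}$, and since $\nabla$ is flat with the $\partial_{t^i}$ flat, $\nabla\mathcal{E}$ has matrix $A=\diag(d_1,\dots,d_m)$. Writing $D'\partial_{t^i}=\sum_k\omega_i^k\partial_{t^k}$ as in \eqref{eq:omega}, one gets
$$\mathcal{Q}\partial_{t^i}=\sum_k\omega_i^k(\mathcal{E})\,\partial_{t^k}+\Big(d_i-\frac{2-d}{2}\Big)\partial_{t^i}.$$

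Next, because $\{\partial_{t^i}\}$ is a frame and $h$ is non-degenerate, $\mathcal{Q}=\mathcal{Q}^{\dag}$ is equivalent to $h(\mathcal{Q}\partial_{t^i},\partial_{t^j})=h(\partial_{t^i},\mathcal{Q}\partial_{t^j})$ for all $i,j$. To expand both sides I would use two identities coming from the Chern connection: from $\omega_i^k=\sum_l\partial h_{il}\,h^{lk}$ one has $\sum_k\omega_i^k(\mathcal{E})\,h_{kj}=\mathcal{E}h_{ij}$ (the $(0,1)$-part of $D'$ annihilating the holomorphic frame), and combining this with the Hermitian symmetry $h_{ij}=\overline{h_{ji}}$ gives $\sum_k\overline{\omega_j^k(\mathcal{E})}\,h_{ik}=\overline{\mathcal{E}}h_{ij}$. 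Substituting, and using that $d$ (and hence, in the present setting, the $d_i$) is real, the left side becomes $\mathcal{E}h_{ij}+(d_i-\frac{2-d}{2})h_{ij}$ and the right side $\overline{\mathcal{E}}h_{ij}+(d_j-\frac{2-d}{2})h_{ij}$; the common term cancels and the equality collapses to $(\mathcal{E}-\overline{\mathcal{E}})h_{ij}=(d_j-d_i)h_{ij}$, which in matrix form is precisely $(\mathcal{E}-\overline{\mathcal{E}})h=h\cdot A-A\cdot h$.

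The only delicate point is bookkeeping the sesquilinearity of $h$ together with the Chern connection conventions when computing $\overline{\mathcal{E}}h_{ij}$: one must use that the $(0,1)$-part of the Chern connection kills the holomorphic frame $\partial_{t^j}$, so that $\overline{\partial}h_{ij}$ is controlled, via conjugation, by the forms $\omega_j^k$ — i.e.\ $\overline{\partial}h_{ij}=\overline{\partial h_{ji}}=\sum_k\overline{\omega_j^k}\,h_{ik}$. Everything else is a routine substitution.
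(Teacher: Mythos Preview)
Your argument is correct and follows essentially the same route as the paper: compute $h(\mathcal{Q}\partial_{t^i},\partial_{t^j})$ and $h(\partial_{t^i},\mathcal{Q}\partial_{t^j})$ in the flat coordinates, use $\omega=\partial h\cdot h^{-1}$ to rewrite $\sum_k\omega_i^k(\mathcal{E})h_{kj}=\mathcal{E}h_{ij}$, and pass to the conjugate via $h_{ij}=\overline{h_{ji}}$ to obtain $\overline{\mathcal{E}}h_{ij}$ on the other side. One small caveat: your parenthetical ``$d$ (and hence, in the present setting, the $d_i$) is real'' is not actually justified by the hypotheses---the relation $d_i+d_{m+1-i}=2-d$ does not force each $d_i$ to be real---but the paper's proof tacitly makes the same move when it writes $h(\partial_{t^i},\mathcal{L}_{\mathcal{E}}\partial_{t^j})$ without conjugating $d_j$, so your argument is no less rigorous than the original on this point.
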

\begin{proof}
$\mathcal{Q}= D'_{\mathcal{E}}- \mathcal{L}_{\mathcal{E}} -
\frac{2-d}{2}\cdot \Id$, so
\begin{eqnarray*}
h(\mathcal{Q}(\partial_{t^{i}}), \partial_{t^{j}})
& = & h(D'_{\mathcal{E}} \partial_{t^{i}}, \partial_{t^{j}} )-h(\mathcal{L}_{\mathcal{E}} \partial_{t^{i}}, \partial_{t^{j}})- \frac{2-d}{2}h_{ij} \\
& = & (\omega(\mathcal{E}) \cdot h)_{ij} + d_{i} \cdot h_{ij} -
\frac{2-d}{2}h_{ij}
\end{eqnarray*}
similarly, we have
\begin{eqnarray*}
h(\partial_{t^{i}}, \mathcal{Q}(\partial_{t^{j}}))
& = & h(\partial_{t^{i}},D'_{\mathcal{E}}(\partial_{t^{j}})-h(\partial_{t^{i}},\mathcal{L}_{\mathcal{E}}\partial_{t^{j}})-\frac{2-d}{2}h_{ij} \\
& = & \overline{(\omega(\mathcal{E}) \cdot h)_{ji}} + d_{j} \cdot
h_{ij} - \frac{2-d}{2}h_{ij}
\end{eqnarray*}
$\mathcal{Q}=\mathcal{Q}^{\dag}$ means that
\begin{equation*}
(\omega(\mathcal{E}) \cdot h)_{ij} + d_{i} \cdot h_{ij} =
\overline{(\omega(\mathcal{E}) \cdot h)_{ji}} + d_{j} \cdot h_{ij}
\end{equation*}
We will simplify $\omega(\mathcal{E}) \cdot h$ and
$\overline{\omega(\mathcal{E}) \cdot h}$ in the following
\begin{equation*}
\omega(\mathcal{E}) \cdot h = \mathcal{E}(h) \cdot h^{-1} \cdot h
= \mathcal{E}(h).
\end{equation*}
So
\begin{equation*}
\overline{\omega(\mathcal{E}) \cdot h} = \overline{\mathcal{E}(h)}
= \overline{\mathcal{E}}(\overline{h})=
\overline{\mathcal{E}}(h^{t}).
\end{equation*}
Similarly, we have $\overline{(\omega(\mathcal{E}) \cdot h)_{ji}}
= \overline{\mathcal{E}}(h_{ij})$.

So $\mathcal{Q}=\mathcal{Q}^{\dag}$ is equivalent to
\begin{equation*}
(\mathcal{E}(h)- \overline{\mathcal{E}}(h))_{ij}= (d_j -d_i) \cdot
h_{ij} = (h \cdot A - A \cdot h)_{ij}, \quad\forall i,j.\qedhere
\end{equation*}
\end{proof}

\begin{remark}
For $m=2$, under the assumptions of Theorem \ref{suff cond} and
Proposition \ref{proposition4}, if $h$ is positive definite, then
$h = \diag(h_{11}, h_{11}^{-1})$, hence
$\mathcal{Q}=\mathcal{Q}^{\dag}$ is equivalent to the following
relation
\begin{equation*}
\mathcal{E}h_{11}=\overline{\mathcal{E}}h_{11}.
\end{equation*}
\end{remark}

From above discussion, we know that given any Frobenius manifold
$(M, g, \circ, e, \mathcal{E})$ of dimension two and an
anti-linear involution $\kappa$ of $\mathcal{T}_M^{(1,0)}$, they
defined a positive CDV-structure if and only if the following
relations hold
$$
\left\{
\begin{array}{lll}
h= \diag(h_{11}, h_{11}^{-1}),
\\[4mm]
h_{11}=h_{11}(t_2),
\\[4mm]
\overline{\partial_{t^2}} \partial_{t^2} \log h_{11}=h_{11}^2
{|\partial_2^3 F|}^2 - h_{11}^{-2},
\\[4mm]
\mathcal{E}h_{11}=\overline{\mathcal{E}}h_{11}.
\end{array}
\right.
$$

The integrability of these equations is proved by Atsushi
Takahashi in \cite{TA}, here ``integrability" means that there
exist a real analytic function $h_{11}$ on $M$ such that it
satisfies the above equations. For $m \geq 3$, the sufficient and
necessary conditions of CDV-structures simplify the equations in
the definition of the structure, but it is still not easy to see
the integrability of the determined equations of $h$.

\subsection{Application to harmonic Frobenius manifolds}\label{subsection2e}
In this section, we exhibit a harmonic potential $P$ (in the sense
of \cite{Sabb22}) for the CDV$\oplus$ structures given by Theorem
\ref{existence}, from which we keep the notations. Recall that $P$
is an endomorphism of $\mathcal{T}_M^{(1, 0)}$ which is in
particular a solution to
\[
\begin{cases}
D'P= \Phi,\\
D'=\nabla - [P^{\dag}, \Phi].
\end{cases}
\]

We denote by $(P_\alpha^\beta)$ the matrix of $P$ in the fixed
system of canonical local coordinates, i.e.
\begin{equation*}
P e_{\alpha} = \sum_\beta P_{\alpha}^\beta e_\beta.
\end{equation*}
From Theorem \ref{existence} and Lemma \ref{last}, we deduce that
the relations
\begin{equation*}
D'_{e_\alpha}e_\beta=\nabla_{e_\alpha}e_\beta -
[P^{\dag},\Phi_{e_\alpha}](e_\beta).\quad \forall\alpha, \beta
\end{equation*}
are equivalent to
\begin{equation}\label{eq:poten}
P_{\beta}^{\alpha} = \frac{\overline{\eta_{\alpha \beta}}}{2
|\eta_\alpha \eta_\beta|}\cdot \eta_\beta,\quad \forall \alpha\neq
\beta,
\end{equation}
where $\eta_{\alpha \beta}:= e_\alpha \eta_\beta = e_\alpha
e_\beta \eta$.

By a similar computation,
\begin{equation*}
D'_{e_\alpha}(P)(e_\beta)= \Phi_{e_\alpha}e_\beta,\quad\forall
\alpha,\beta
\end{equation*}
are equivalent to
\[
\begin{cases}
e_\alpha(P_{\beta}^\beta)= - \delta_{\alpha \beta},\\
e_\alpha(P_\beta^\gamma)= P_\beta^\gamma \cdot
[\omega_\beta^\beta(e_\alpha)-\omega_\gamma^\gamma(e_\alpha)],&
\forall \alpha\neq \beta, \forall \gamma,\\
e_\alpha(P_{\alpha}^\beta)= P_{\alpha}^\beta \cdot
[\omega_{\alpha}^{\alpha}(e_\alpha)-\omega_\beta^\beta(e_\alpha)],&
\forall \alpha\neq \beta.
\end{cases}
\]

So if we take the matrix of $P$ as follows
\begin{equation}\label{eq:potential}
\begin{cases}
P_\beta^\beta= - u^\beta,\\[5pt]
P_\beta^\alpha = \dfrac{\overline{\eta_{\alpha \beta}}}{2
|\eta_\alpha \eta_\beta|}\cdot \eta_\beta,& \forall \alpha\neq
\beta.
\end{cases}
\end{equation}
then the endomorphism $P$ satisfies $D'P= \Phi$ and $D'=\nabla -
[P^{\dag}, \Phi]$. It is easy to see this $P$ also satisfies
\begin{equation*}
P^* = P,
\end{equation*}
Set $\mathcal{V}:= \nabla \mathcal{E} - \frac{2-d}{2} \cdot \Id$.
We will deduce the relation $\mathcal{Q}=\mathcal{V}+[P^{\dag},
\mathcal{U}]$ in the following.

Since $${P^{\dag}}_{\alpha}^{\beta}= \overline{K_{\alpha \alpha}}
\cdot \overline{P_\alpha^\beta} \cdot K_{\beta \beta},$$ computing
the right hand side directly, we get
\begin{equation}\label{eq:pdag}
\begin{cases}
{P^\dag}_\beta^\beta= - \overline{u^\beta},\\[5pt]
{P^\dag}_\alpha^\beta = \omega_{\beta}^{\beta}(e_{\alpha}),
\forall \alpha\neq \beta.
\end{cases}
\end{equation}
By straightforward computations, we get
\begin{equation}\label{eq:pu}
[P^\dag, \mathcal{U}] e_\alpha = -\sum_{\beta \neq \alpha}
(u^\beta - u^\alpha) \cdot
 \omega_{\beta}^{\beta}(e_\alpha)\cdot e_\beta = -\sum_{\alpha \neq \beta} (u^\beta
- u^\alpha)\cdot \frac{e_\alpha \eta_\beta}{2 \eta_\beta}\cdot
e_\beta.
\end{equation}

By a similar computation, we get $$\nabla_{e_\alpha} \mathcal{E} =
\frac{2-d}{2}\cdot e_\alpha + \sum_{\beta \neq \alpha} (u^\beta -
u^\alpha)\cdot \frac{e_\alpha \eta_\beta}{2 \eta_\beta} \cdot
e_\beta.$$ Hence we get
\begin{equation}\label{eq:v}
\mathcal{V} e_\alpha =   \sum_{\beta \neq \alpha} (u^\beta -
u^\alpha) \cdot \frac{e_\alpha \eta_\beta}{2 \eta_\beta}\cdot
e_\beta.
\end{equation}
By relations \eqref{eq:pu} and \eqref{eq:v}, we conclude that
$$\mathcal{V} + [P^\dag,
\mathcal{U}]= 0 = \mathcal{Q}.$$

So we have proved

\begin{proposition}\label{potential}
The CDV$\oplus$-structure constructed from Theorem \ref{existence}
is a harmonic Frobenius manifold with the harmonic potential $P$
given by \eqref{eq:potential}.
\end{proposition}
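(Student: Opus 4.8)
The plan is to verify directly, working in a system of canonical local coordinates $(u^\alpha)$ normalized so that $\mathcal{E}=\sum_\alpha u^\alpha e_\alpha$, that the endomorphism $P$ whose matrix is given by \eqref{eq:potential} satisfies the two defining equations $D'P=\Phi$ and $D'=\nabla-[P^\dag,\Phi]$, together with the auxiliary identities $P^*=P$ and $\mathcal{Q}=\mathcal{V}+[P^\dag,\mathcal{U}]$ that belong to the data of a harmonic Frobenius manifold. Since all of these are $\mathcal{O}_M^\infty$-linear tensorial identities, it is enough to evaluate them on the frame $(e_\alpha)$, and the computations are considerably simplified by the fact that, by Theorem \ref{existence}, $h$ and its Chern connection matrix $\omega$ are diagonal in these coordinates.

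First I would treat the relation $D'=\nabla-[P^\dag,\Phi]$. By Theorem \ref{existence} one has $D'_{e_\alpha}e_\beta=\omega_\beta^\beta(e_\alpha)e_\beta$ with $\omega_\beta^\beta=\partial\eta_\beta/2\eta_\beta$, while Lemma \ref{last} supplies the Christoffel symbols of $\nabla$ in canonical coordinates. Matching the two sides of $D'_{e_\alpha}e_\beta=\nabla_{e_\alpha}e_\beta-[P^\dag,\Phi_{e_\alpha}](e_\beta)$ for $\alpha\neq\beta$ forces the off-diagonal entries of $P$ to be $P_\beta^\alpha=\overline{\eta_{\alpha\beta}}\,\eta_\beta/(2|\eta_\alpha\eta_\beta|)$ as in \eqref{eq:poten}; the case $\alpha=\beta$ holds automatically since $\Phi_{e_\alpha}(e_\alpha)=-e_\alpha$ is an eigenvector, so the bracket contributes nothing in the $e_\alpha$ direction. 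Along the way, the symmetry $\eta_{\alpha\beta}=\eta_{\beta\alpha}$ shows that $P$ is well defined and, together with the diagonal form of $h$ and $g$ in canonical coordinates, yields $P^*=P$.

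Next I would check $D'P=\Phi$, i.e. $D'_{e_\alpha}(P)(e_\beta)=-\delta_{\alpha\beta}e_\alpha$. Expanding $D'_{e_\alpha}(P)(e_\beta)=D'_{e_\alpha}(Pe_\beta)-P(D'_{e_\alpha}e_\beta)$ and using that $\omega$ is diagonal turns this into a first-order scalar system for the entries of $P$: $e_\alpha(P_\beta^\beta)=-\delta_{\alpha\beta}$, together with the two families of off-diagonal relations listed before \eqref{eq:potential}. The diagonal equations are solved by $P_\beta^\beta=-u^\beta$, which is the diagonal part of \eqref{eq:potential}; the off-diagonal equations then become compatibility constraints on the $P_\beta^\alpha$ already fixed in \eqref{eq:poten}, which reduce, via $\omega_\gamma^\gamma=\partial\eta_\gamma/2\eta_\gamma$ and the relations among the $\eta_{\alpha\beta}$ coming from $D'(\Phi)=0$ (Lemma \ref{canoDphi}, equivalently the flatness identities established in the proof of Theorem \ref{existence}), to an identity in $|\eta_\alpha|$ and $\eta_{\alpha\beta}$ that one confirms by direct computation. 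Finally, $\mathcal{Q}=\mathcal{V}+[P^\dag,\mathcal{U}]$ follows by computing both sides on $e_\alpha$: from \eqref{eq:pdag} one gets $[P^\dag,\mathcal{U}]e_\alpha=-\sum_{\beta\neq\alpha}(u^\beta-u^\alpha)\omega_\beta^\beta(e_\alpha)e_\beta$, which is exactly $-\mathcal{V}e_\alpha$ by a short computation with Lemma \ref{last}, and since $\mathcal{Q}=0$ by Theorem \ref{existence} the identity holds.

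The main obstacle is the off-diagonal part of $D'P=\Phi$: one must show that the tensor $P$ pinned down by the equation $D'=\nabla-[P^\dag,\Phi]$ automatically solves the differential system arising from $D'P=\Phi$. This is precisely the point where harmonicity genuinely uses the CDV/tt\textsuperscript{*} structure — specifically $D'(\Phi)=0$ and the flatness of $\nabla$ — rather than just the bookkeeping with the diagonal matrices $h$ and $\omega$; everything else in the argument is routine verification on the canonical frame.
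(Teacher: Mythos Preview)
Your plan matches the paper's own argument step for step: both verify $D'=\nabla-[P^\dag,\Phi]$ and $D'P=\Phi$ on the canonical frame using Theorem \ref{existence} and Lemma \ref{last}, then check $P^*=P$ and $\mathcal{Q}=\mathcal{V}+[P^\dag,\mathcal{U}]=0$ by direct computation. One minor point: the off-diagonal part of $D'P=\Phi$ that you flag as the ``main obstacle'' is in fact elementary and does not require $D'(\Phi)=0$ or extra flatness relations among the $\eta_{\alpha\beta}$ --- since $\overline{\eta_{\gamma\beta}}$ is anti-holomorphic, a logarithmic differentiation of $P_\beta^\gamma=\overline{\eta_{\gamma\beta}}\,\eta_\beta/(2|\eta_\gamma||\eta_\beta|)$ gives $e_\alpha(P_\beta^\gamma)=P_\beta^\gamma\bigl[\omega_\beta^\beta(e_\alpha)-\omega_\gamma^\gamma(e_\alpha)\bigr]$ immediately, just as the paper (implicitly) uses.
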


\bibliographystyle{amsplain}
\bibliography{lin090305_cs}
\end{document}